\documentclass[11pt]{amsart}
\usepackage{graphicx}
\usepackage[left=32mm,right=32mm,top=30mm,bottom=32mm]{geometry}

\usepackage{times}

\usepackage{float}
\usepackage{geometry,graphicx,fancybox}
\usepackage{enumerate,amsmath,amssymb,amsthm}
\usepackage{comment}
\usepackage{epsfig}
\usepackage{pst-grad}
\usepackage{pst-plot}
\usepackage{mathtools}
\usepackage[shortlabels]{enumitem}

\theoremstyle{plain}

\usepackage[kerning=true]{microtype} 
\usepackage{graphicx}
\usepackage{wrapfig}
\usepackage{dsfont}
\usepackage{multirow}
\usepackage{stmaryrd}

\newtheorem{theorem}{Theorem}[section]
\newtheorem{corollary}[theorem]{Corollary}
\newtheorem{lemma}[theorem]{Lemma}
\newtheorem{conjecture}[theorem]{Conjecture}
\newtheorem{proposition}[theorem]{Proposition}

\theoremstyle{remark}

\theoremstyle{definition}
\newtheorem{definition}[theorem]{Definition}
\newtheorem{remark}[theorem]{Remark}


\newcommand{\Z}{\mathbb{Z}}
\newcommand{\R}{\mathbb{R}}

\newcommand{\N}{\mathbb{N}}

\newcommand{\eps}{\varepsilon}

\DeclareMathOperator{\vol}{Vol}

\DeclareMathOperator{\area}{Area}
\DeclareMathOperator{\length}{length}

\DeclareMathOperator{\sys}{sys}

\DeclareMathOperator{\Det}{Det}

\numberwithin{equation}{section}

\title[Length products for manifolds]{Minimal length product over homology bases of manifolds}

\author{Florent Balacheff, Steve Karam \& Hugo Parlier}

\address{F. Balacheff, Universitat Aut\`onoma de Barcelona, Spain.}

\email{fbalacheff@mat.uab.cat}

\address{S. Karam, Lebanese University, Lebanon.}

\email{karam.steve.work@gmail.com}

\address{H. Parlier, University of Luxembourg, Luxembourg.}

\email{hugo.parlier@uni.lu}

\thanks{The first author acknowledges support by grants ANR-12-BS01-0009-02 and Ram\'on y Cajal RYC-2016-19334. The second author acknowledges support from grant ANR CEMPI (ANR-11-LABX-0007-01).
The third author acknowledges support from the ANR/FNR project SoS, INTER/ANR/16/11554412/SoS, ANR-17-CE40-0033.}

\keywords{Asymptotic volume, cup product, homology basis, hyperdeterminant, length product inequality, Minkowski's theorem, surface, systolic geometry}

\subjclass{53C23}

\begin{document}
\maketitle

\begin{abstract} 
Minkowski's second theorem can be stated as an inequality for $n$-dimensional flat Finsler tori relating the volume and the minimal product of the lengths of closed geodesics which form a homology basis. In this paper we show how this fundamental result can be promoted to a principle holding for a larger class of Finsler manifolds. This includes manifolds for which first Betti number and dimension do no necessarily coincide, a prime example being the case of surfaces. This class of manifolds is described by a non-vanishing condition for the hyperdeterminant reduced modulo $2$ of the multilinear map induced by the fundamental class of the manifold on its first $\Z_2$-cohomology group using the cup product.
 \end{abstract}

\bigskip


\section{Introduction}%
Minkowski's second theorem asserts that the product of the successive minima of a symmetric convex body in Euclidean space is bounded in terms of its volume. More precisely, if $n$ denotes a positive integer, the successive minima of a given symmetric convex body $K$ of $\R^n$, defined for $k=1,\ldots,n$ as the numbers 
$$
\lambda_k:=\min \{ t\mid tK \, \, \text{contains} \, \, k \, \, \text{linearly-independent vectors of $\Z^n$}\},
$$
satisfy the sharp inequality
$$
\prod_{k=1}^{n} \lambda_k \cdot \vol(K) \leq 2^n.
$$
If we analyze the situation in terms of metric geometry, this inequality can be reformulated as the following upper bound on a {\it length product}: any flat Finsler torus of dimension $n$ with Busemann-Hausdorff volume $V$ admits a family of closed geodesics $(\gamma_1,\ldots,\gamma_n)$ inducing a basis of its first real homology group---in short, an $\R$-homology basis---whose product of lengths satisfies
\begin{equation}\label{eq:Mink}
\prod_{k=1}^{n} \ell(\gamma_k) \leq {2^n \over b_n} \cdot V.
\end{equation}
Indeed symmetric convex bodies naturally parametrize (up to isometry) the space of flat Finsler tori through the correspondence $K \mapsto (\R^n/\Z^n, \|\cdot\|_K)$. Here $\|\cdot\|_K$ denotes the norm induced by the symmetric convex $K$. In this setting, each successive minima $\lambda_k$ is realized as the Finsler length of some closed geodesic $\gamma_k$ such that the real homology classes of these loops induce a $\R$-homology basis. Busemann-Hausdorff volume is defined by integrating the multiple of Lebesgue measure in each tangent space for which $K$---the unit ball in each tangent space of the Finsler metric---has volume equal to the Lebesgue measure (denoted by $b_n$) of the Euclidean unit ball.

\bigskip

The purpose of this article is to study analogs of this length product inequality for Riemannian and Finsler manifolds. Of course, a first natural direction is to ask whether inequality (\ref{eq:Mink}) still holds for non-flat tori. Indeed, in \cite{BK16} the first two authors proved that Riemannian $n$-dimensional tori with unit volume have their minimal length product among $\Z_2$-homology basis induced by $n$ closed geodesics uniformly bounded from above by $n^n$, answering a question by Gromov (see \cite[p.339]{Gro96}). Using comparison procedures explained in \cite[Section 4.4]{ABT16}, it implies that inequality (\ref{eq:Mink}) still holds for non-flat Finsler tori---albeit with a weaker constant---even for non-reversible ones if we use the Holmes-Thompson definition of volume instead of the Busemann-Hausdorff one. The question of the best constant in such length product inequalities for Finsler or Riemannian tori is still open. 

\bigskip

In a perhaps surprising direction, we present here a length product inequality similar to (\ref{eq:Mink}) but valid for closed manifolds whose first $\Z_2$-Betti number does not necessarily coincide with the dimension. 
Let $M$ be a closed connected manifold of dimension $n$ (and from now on all our manifolds will be connected and closed). Denote by $H^1(M;\Z_2)$ the first cohomology group of $M$ with $\Z_2$-coefficients and suppose that its dimension $b$ is positive. The $\Z_2$-fundamental class of $M$ induces a $n$-multilinear symmetric form on $H^1(M;\Z_2)$ denoted by $F_M$ as follows:
\begin{eqnarray*}
F_M: H^1(M;\Z_2) \times \ldots \times H^1(M;\Z_2) & \to & \Z_2\\
(\beta_1,\ldots,\beta_n) &\mapsto & \beta_1\cup \ldots \cup \beta_n [M].
\end{eqnarray*}
Among the algebraic invariants of multilinear forms over complex vector spaces, the {\it hyperdeterminant} denoted by $\Det$ was introduced by Cayley \cite{Cay45} and extensively studied by Gelfand, Kapranov and Zelevinsky \cite{GKZ92}. We consider its reduction modulo $2$ which we denote by $\Det_2$ (see Section \ref{sec:alginv} for more details).

\begin{theorem}\label{manifolds.Z_2.intro}
Consider a Riemannian manifold $M$ of first $\Z_2$-Betti number $b>0$ and of dimension $n$. Suppose that 
$$
\Det_2 F_M \neq 0.
$$
Then there exists a $\Z_2$-homology basis induced by closed geodesics $\gamma_1,\ldots,\gamma_{b}$ whose length product satisfies the following inequality:
\begin{equation}\label{eq:prod.intro}
\prod_{k=1}^{b} \ell(\gamma_k)\leq {n^b} \cdot \vol(M)^{b/ n}.
\end{equation}
\end{theorem}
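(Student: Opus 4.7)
The plan is to adapt the classical Minkowski approach of successive minima, as refined by Gromov and Balacheff--Karam \cite{BK16} for tori, to the current topological setting, with the hyperdeterminant hypothesis supplying the algebraic input needed to run the argument.

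First, I would define the successive $\Z_2$-homological minima
\[
\lambda_k := \inf\{ L > 0 \mid \text{the loops in $M$ of length $\leq L$ generate a subspace of } H_1(M;\Z_2) \text{ of dimension } \geq k \}
\]
for $k = 1, \ldots, b$, and observe by a standard minimization-compactness argument that each $\lambda_k$ is realized by a closed geodesic $\gamma_k$ of minimal length in its free $\Z_2$-homology class. By construction, $([\gamma_1], \ldots, [\gamma_b])$ is a basis of $H_1(M;\Z_2)$, so it suffices to prove $\prod_{k=1}^{b} \lambda_k \leq n^b \cdot \vol(M)^{b/n}$.

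Next, I would work with the Poincar\'e dual cohomology basis $(\beta_1, \ldots, \beta_b)$ of $H^1(M;\Z_2)$ and translate the hypothesis $\Det_2 F_M \neq 0$ into a combinatorial statement: for every exponent vector $(a_1, \ldots, a_b) \in \N^b$ with $\sum_k a_k = n$ that is sufficiently balanced (meaning as close as possible to $a_k = n/b$ for all $k$), the cup product
\[
\beta_1^{\cup a_1} \cup \cdots \cup \beta_b^{\cup a_b}\,[M] \in \Z_2
\]
is nonzero. For each such configuration, a Gromov-type systolic inequality---derived through a coarea argument using suitable $1$-form (or Poincar\'e dual codimension-$1$ cycle) representatives of the $\beta_k$, whose ``comass'' is controlled by $\lambda_k^{-1}$---should yield
\[
\lambda_1^{a_1} \cdots \lambda_b^{a_b} \leq n^n \cdot \vol(M),
\]
with the constant $n^n$ matching the torus case.

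Finally, I would combine these bounds multiplicatively and invoke the symmetry of $F_M$ under permutations of its arguments (a symmetry that is preserved by the nonvanishing of $\Det_2 F_M$) in order to average the exponents toward the balanced distribution $a_k = n/b$, producing the uniform bound $\prod_k \lambda_k^{n/b} \leq n^n \cdot \vol(M)$, equivalent to \eqref{eq:prod.intro}. The hard part will be the algebraic step: rigorously converting $\Det_2 F_M \neq 0$ into the existence of enough balanced nonzero cup products to sustain the averaging, which is where the precise definition of $\Det_2$ and its invariance properties must be put to work. A secondary subtlety lies in carrying the sharp constant $n^n$ through the systolic step, especially when the exponents $a_k$ must deviate from $n/b$ because $b$ does not divide $n$.
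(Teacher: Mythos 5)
Your skeleton matches the paper's: successive minima realized by closed geodesics, the Kronecker-dual cohomology basis $(\alpha_1,\ldots,\alpha_b)$ with $L(\alpha_k)=\ell(\gamma_k)$, the cup-product length inequality $\prod_{k=1}^n L(\alpha_{i_k})\leq n^n\cdot\vol(M)$ from \cite{BK16} as the geometric engine, and a multiplicative combination at the end. But the step you yourself flag as ``the hard part'' --- converting $\Det_2 F_M\neq 0$ into usable nonzero cup products --- is exactly where your sketch goes wrong, and it is the heart of the proof.

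Your proposed translation, namely that $\Det_2 F_M\neq 0$ forces $\beta_1^{\cup a_1}\cup\cdots\cup\beta_b^{\cup a_b}[M]\neq 0$ for \emph{every} sufficiently balanced exponent vector, is false. Already for an orientable surface ($n=2$, $b=2g$) in a standard symplectic basis, $\Det_2 F_M=1$ while almost all the products $\alpha_i\cup\alpha_j[M]$ vanish. What the hypothesis actually gives is much weaker and of a different shape: the hyperdeterminant is a sum of monomials, each monomial being a product of $q$ entries $F_{i_1,\ldots,i_n}$, so nonvanishing mod $2$ only guarantees that \emph{one} such monomial is nonzero, i.e.\ that there exist $q$ (not all distinct) index tuples whose associated $n$-fold cup products are simultaneously nonzero. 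The missing idea is the structural fact that makes this enough: every monomial of the hyperdeterminant is \emph{balanced}, meaning its $q$ tuples collectively use each index $i\in\{1,\ldots,b\}$ exactly $m$ times with $qn=mb$ (the paper derives this from Cramlet's theorem on relative invariants of multilinear forms). Applying the cup-product inequality to each of the $q$ factors of the surviving monomial and multiplying yields $\bigl(\prod_{k=1}^b L(\alpha_k)\bigr)^m\leq n^{qn}\cdot\vol(M)^q$, and taking $m$-th roots with $qn=mb$ gives the bound. Your alternative ``averaging toward $a_k=n/b$'' cannot be carried out inequality by inequality, since $n/b$ need not be an integer and no single balanced cup product controls the full product $\prod_k\lambda_k$; the exponent bookkeeping must happen across all $q$ factors of one monomial at once. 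Without the balancedness of $\Det_2$ your argument does not close.
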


Applying this theorem to $M$ diffeomorphic to the $n$-dimensional torus, we recover the result of \cite{BK16} that Riemannian $n$-dimensional tori with unit volume have their minimal length product among $\Z_2$-homology basis uniformly bounded from above by $n^n$.
Minkowski's second theorem thus appears as a {\it principle} which holds even when the first Betti number and the dimension do not necessarily coincide. Again, by comparison procedures explained in \cite[Section 4.4]{ABT16}, Theorem \ref{manifolds.Z_2.intro} implies that a similar inequality to (\ref{eq:prod.intro}) holds---albeit with a worst constant---for Finsler manifolds $M$ with $\Det_2 F_M \neq 0$.\\

Here are some direct applications of Theorem \ref{manifolds.Z_2.intro}. First we derive the following Minkowski's second principle for surfaces.

\begin{corollary}\label{surfaces.Z_2.intro}
Consider a Riemannian surface $S$ of first $\Z_2$-Betti number $b>0$.

Then there exists a $\Z_2$-homology basis induced by closed geodesics $\gamma_1,\ldots,\gamma_{b}$ whose length product satisfies the following inequality:
$$
\prod_{i=1}^{b} \ell(\gamma_i)\leq {2^b} \cdot \area(S)^{b/ 2}.
$$
\end{corollary}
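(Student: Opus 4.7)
The plan is simply to invoke Theorem \ref{manifolds.Z_2.intro} with $n=2$. The conclusion of the theorem then reads $\prod_{k=1}^b \ell(\gamma_k) \leq n^b \cdot \vol(M)^{b/n} = 2^b \cdot \area(S)^{b/2}$, which is exactly the stated inequality. So the entire content of the corollary reduces to verifying the hypothesis $\Det_2 F_S \neq 0$ for an arbitrary closed Riemannian surface.

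Since $\dim S = 2$, the form $F_S$ is a symmetric \emph{bilinear} form on $H^1(S;\Z_2)$, namely the mod-$2$ cup product pairing (equivalently, the mod-$2$ intersection form). In the bilinear case, the hyperdeterminant of Cayley reduces to the usual determinant of the Gram matrix of the form in any basis (this is in fact the original setting of \cite{Cay45}, preceding the multilinear generalization of \cite{GKZ92}). Hence $\Det_2 F_S$ is nonzero precisely when the $\Z_2$-intersection form is non-degenerate.

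Non-degeneracy is then immediate from Poincar\'e duality with $\Z_2$-coefficients, which holds for any closed manifold, orientable or not: the cup product
$$
H^1(S;\Z_2) \times H^1(S;\Z_2) \to H^2(S;\Z_2) \cong \Z_2
$$
is a perfect pairing. Its Gram matrix in any basis is therefore invertible over $\Z_2$, so $\Det_2 F_S = 1$, and Theorem \ref{manifolds.Z_2.intro} applies to produce the desired basis of closed geodesics.

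I do not anticipate a genuine obstacle: the corollary is a clean specialization. The only point one must be careful about is the reduction of the hyperdeterminant to the ordinary determinant in the bilinear case, but this is a feature of the definition rather than a computation, and it is presumably formalized in the paper's algebraic invariants section. Once that identification is in hand, everything else is standard manifold topology.
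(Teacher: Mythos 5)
Your proposal is correct. The reduction to Theorem~\ref{manifolds.Z_2.intro} with $n=2$ is exactly what the paper does, and your identification of $\Det_2 F_S$ with the mod-$2$ determinant of the Gram matrix of the cup product pairing is explicitly confirmed in the paper's Section~\ref{sec:alginv} (``In dimension $n=2$, the hyperdeterminant coincides with the usual determinant''). Where you diverge is in how non-vanishing is verified: you invoke the abstract fact that the cup product pairing $H^1(S;\Z_2)\times H^1(S;\Z_2)\to H^2(S;\Z_2)\cong\Z_2$ is perfect by mod-$2$ Poincar\'e duality on any closed surface, hence has invertible Gram matrix over $\Z_2$. The paper instead computes $\Det_2 F_S$ explicitly, splitting into cases: for orientable $S$ it notes $F_S$ is alternating, rewrites $\Det_2$ as the mod-$2$ Pfaffian (a sum over partitions of $\{1,\ldots,2g\}$ into pairs), and evaluates it in a symplectic basis where exactly one partition contributes; for non-orientable $S$ it similarly exhibits adapted bases (with $c\cap c=1$ terms) and uses Milnor's partition formula for symmetric forms. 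Your route is shorter and uniform across orientability types, at the cost of quoting perfectness of the duality pairing as a black box; the paper's route is self-contained and makes the invariant concretely computable in the bases it later uses. Both are valid proofs.
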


Indeed we will check in Section \ref{sec:manifolds} that for such a surface $S$ the reduction modulo $2$ of the hyperdeterminant of $F_S$ is always non-zero.\\

When $m=3$ and $b=2$ the hyperdeterminant was explicitly described by Cayley (see \cite{Cay45}), and so using this, we are also able to deduce the following from Theorem \ref{manifolds.Z_2.intro}:

\begin{corollary}\label{connectedsum.Z_2.intro}
Consider a Riemannian manifold $M$ diffeomorphic to $\R P^3 \# \R P^3$.

Then there exists a $\Z_2$-homology basis induced by two closed geodesics $\gamma_1,\gamma_2$ whose length product satisfies the following inequality:
$$
\ell(\gamma_1) \cdot \ell(\gamma_2)\leq {9} \cdot \vol(M)^{2/3}.
$$
\end{corollary}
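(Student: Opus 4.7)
My plan is to reduce Corollary \ref{connectedsum.Z_2.intro} to Theorem \ref{manifolds.Z_2.intro}: with $n=3$ it suffices to show that $b := \dim_{\Z_2} H^1(M;\Z_2) = 2$ and that $\Det_2 F_M \neq 0$, since the resulting upper bound $n^{b}\,\vol(M)^{b/n} = 9\,\vol(M)^{2/3}$ is exactly the one claimed.

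The first step is to work out the $\Z_2$-cohomology ring of $M \cong \R P^3\#\R P^3$. Writing $M = (\R P^3\setminus B_1) \cup_{S^2} (\R P^3\setminus B_2)$ and applying Mayer--Vietoris with $\Z_2$-coefficients, using $H^*(\R P^3;\Z_2) = \Z_2[a]/(a^4)$ with $|a|=1$, one obtains $H^1(M;\Z_2) = \Z_2\langle \alpha_1,\alpha_2\rangle$, where $\alpha_i$ restricts to the generator of $H^1$ of the $i$-th punctured summand and to zero on the other side. This already gives $b = 2$. For the ring structure, a standard connected-sum argument (the two classes can be represented by cocycles with disjoint supports in opposite summands) forces $\alpha_1\cup\alpha_2 = 0$ in $H^2(M;\Z_2)$. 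On the other hand, the collapse map $c_i : M \to \R P^3$ onto the $i$-th summand has degree one, so $c_i^*$ sends the fundamental cocycle of $\R P^3$ to that of $M$ and sends $a$ to $\alpha_i$; consequently $\alpha_i^{\,3} = c_i^*(a^3) = [M]^{*} \neq 0$ for $i=1,2$.

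It follows that the only nonzero entries of $F_M$ in the basis $(\alpha_1,\alpha_2)$ are $F_M(\alpha_i,\alpha_i,\alpha_i) = 1$, so that in coordinates $\beta_j = x_j\alpha_1 + y_j\alpha_2$:
$$
F_M(\beta_1,\beta_2,\beta_3) = x_1x_2x_3 + y_1y_2y_3.
$$
The associated $2\times 2\times 2$ tensor is diagonal, with only $t_{111}$ and $t_{222}$ nonzero. Plugging this into Cayley's explicit formula for the hyperdeterminant (recalled just before the statement of Corollary \ref{connectedsum.Z_2.intro}), every monomial involving an off-diagonal entry drops and only $(t_{111}t_{222})^2 = 1$ survives, so $\Det F_M = 1$ and therefore $\Det_2 F_M = 1 \neq 0$.

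Both hypotheses of Theorem \ref{manifolds.Z_2.intro} are then satisfied, and a direct application produces a $\Z_2$-homology basis induced by closed geodesics $\gamma_1,\gamma_2$ with $\ell(\gamma_1)\ell(\gamma_2) \leq 9\,\vol(M)^{2/3}$. The main obstacle I anticipate is the cohomological step---in particular, justifying the vanishing $\alpha_1\cup\alpha_2 = 0$ and the nontriviality $\alpha_i^{\,3} \neq 0$ carefully via the connected-sum decomposition---after which the diagonal structure of the resulting tensor makes the hyperdeterminant computation a one-line check.
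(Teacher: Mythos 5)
Your proposal is correct and follows essentially the same route as the paper: reduce to Theorem~\ref{manifolds.Z_2.intro} with $n=3$, $b=2$, use the connected-sum decomposition to see that $F_M$ is diagonal in the natural basis of $H^1(M;\Z_2)$ (only $F_{111}=F_{222}=1$), and conclude $\Det_2 F_M=1$ from Cayley's formula. Your explicit check that the mixed cup products vanish is a point the paper dismisses as ``a straightforward calculation,'' but it is genuinely needed, since $F_{112}=F_{122}=1$ would force $\Det_2 F_M=1+1=0$.
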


To our knowledge and with the exception of the $3$-dimensional torus, this is the only example of an inequality in dimension higher than 2 which bounds from below the volume of a Riemannian manifold by the lengths of several closed geodesics.\\

Using the notion of homological asymptotic volume introduced in \cite{Bab92} and its connection to the stable norm (see Section \ref{sec:stab} for more details), we derive the following asymptotic estimate from Theorem \ref{manifolds.Z_2.intro}.

\begin{corollary}\label{nb.geodesics.intro}
Consider a Riemannian manifold $M$ of first $\Z_2$-Betti number $b>0$ and of dimension $n$. Suppose that its integral homology has no $2$-torsion and that 
$$
\Det_2 F_M \neq 0.
$$
Then the number $N_M(t)$ of homologically distinct closed geodesics of length $\leq t$ satisfies the following inequality:
$$
N_M(t)\geq {2^b \over \vol(M)^{b/n} \cdot n^b \cdot b!}\, t^b +o(t^{b-1}).
$$
\end{corollary}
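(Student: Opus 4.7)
The plan is to combine Theorem \ref{manifolds.Z_2.intro} with the easy half of Minkowski's second theorem applied to the stable norm ball, and then invoke the standard lattice point counting asymptotic recalled in Section \ref{sec:stab}. Write $V := H_1(M;\R)$, let $L := H_1(M;\Z)/\mathrm{torsion} \subset V$ be the integer lattice, and let $\Bst \subset V$ denote the unit ball of the stable norm. The no-$2$-torsion hypothesis yields the isomorphism $H_1(M;\Z) \otimes \Z_2 \cong H_1(M;\Z_2)$, so any loops whose classes form a $\Z_2$-homology basis have integer homology classes which are linearly independent in $V$.

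First I apply Theorem \ref{manifolds.Z_2.intro} to produce closed geodesics $\gamma_1, \ldots, \gamma_b$ inducing a $\Z_2$-homology basis with $\prod_k \ell(\gamma_k) \leq n^b \vol(M)^{b/n}$. After reordering so that $\ell(\gamma_1) \leq \cdots \leq \ell(\gamma_b)$, the trivial bound $\|[\gamma]\|_{st} \leq \ell(\gamma)$ shows that $[\gamma_1], \ldots, [\gamma_k]$ are $k$ linearly independent lattice points inside $\ell(\gamma_k) \cdot \Bst$. The $k$-th successive minimum $\lambda_k$ of $(\Bst, L)$ therefore satisfies $\lambda_k \leq \ell(\gamma_k)$, and multiplying gives
$$\prod_{k=1}^b \lambda_k \leq n^b \vol(M)^{b/n}.$$

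Next I use the lower bound in Minkowski's second theorem, $\prod_k \lambda_k \cdot \mathrm{vol}_V(\Bst) \geq 2^b \, d(L)/b!$, where $d(L)$ denotes the covolume of $L$ in $V$. Rearranging yields
$$\mathrm{vol}_V(\Bst) \geq \frac{2^b \, d(L)}{b! \cdot n^b \, \vol(M)^{b/n}}.$$
To close the argument, I invoke the connection of Section \ref{sec:stab} between $N_M(t)$ and the lattice point count of $L$ inside dilates of $\Bst$: the homological asymptotic volume of $M$ coincides with $\mathrm{vol}_V(\Bst)/d(L)$, and this is the leading coefficient in a lower asymptotic of the form $N_M(t) \geq \mathrm{vol}_V(\Bst) \, t^b/d(L) + o(t^{b-1})$. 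Substituting the volume estimate above delivers the stated inequality.

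The main obstacle is precisely this last step: the stable norm on $L$ is defined as an infimum over iterated loops, so a primitive class $\alpha \in L$ with $\|\alpha\|_{st} \leq t$ need not be representable by a single closed geodesic of length $\leq t$. The resolution, which is what makes the homological asymptotic volume of \cite{Bab92} a well-defined invariant, is to compare $N_M(t)$ with the lattice point count $\#(L \cap t\Bst)$ up to a lower-order error absorbed in the $o(t^{b-1})$ term, using the sharp approximation between the stable norm and actual geodesic lengths developed in Section \ref{sec:stab}.
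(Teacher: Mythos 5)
Your proposal is correct and follows essentially the same route as the paper: Theorem \ref{manifolds.Z_2.intro} plus the no-$2$-torsion hypothesis gives a short $\R$-homology basis of geodesic classes, the easy (cross-polytope) half of Minkowski's second theorem for the stable norm ball yields the lower bound on $\mu(\mathcal{B}(M))$ (this is exactly the paper's Proposition \ref{stab}, so your detour through successive minima is cosmetic), and the conclusion follows from the known asymptotic $\lim_{t\to\infty} N_M(t)/t^{b} = \mu(\mathcal{B}(M))$, which the paper likewise takes from the literature (Burago, Cerocchi--Sambusetti) rather than proving.
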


This improves a previous lower bound due to Babenko \cite[Theorem 7.1.a]{Bab92}. 
Note that the condition that integral homology has no $2$-torsion is equivalent to having the first $\R$-Betti number and first $\Z_2$-Betti number coincide. For an orientable Riemannian surface $S$ of genus $g\geq 1$ of unit area, this gives the following asymptotic estimate:
$$
N_S(t)\geq t^{2g}/ (2g)! +o(t^{2g-1}).
$$
\bigskip

Now we switch our focus to surfaces.

\medskip

The closed geodesics obtained in Corollary \ref{surfaces.Z_2.intro} are also $\Z$-homologically independent, but do not necessarily form a $\Z$-homology basis. Nonetheless, by using techniques specific to dimension $2$, we exhibit $\Z$-homology basis whose length product is uniformly bounded above as follows.

\begin{theorem}\label{surfaces.Z.intro}
Consider an orientable Riemannian surface $S$ of genus $g$.

Then there exists a $\Z$-homology basis induced by closed geodesics $\gamma_1,\ldots,\gamma_{2g}$ whose length product satisfies the following inequality:
$$
\prod_{k=1}^{2g} \ell(\gamma_k)\leq {C^g} \cdot \area(S)^g
$$
for some positive constant $C<2^{31}$.
\end{theorem}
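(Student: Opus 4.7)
The starting point is Corollary \ref{surfaces.Z_2.intro}, which already provides $2g$ closed geodesics $\alpha_1,\ldots,\alpha_{2g}$ whose $\Z_2$-classes form a basis of $H_1(S;\Z_2)$ and whose length product is bounded by $4^g\,\area(S)^g$. Because $H_1(S;\Z)\cong\Z^{2g}$ is torsion-free, these geodesics are automatically $\Z$-linearly independent, so they span a sublattice $\Lambda\subset H_1(S;\Z)$ of finite odd index. The theorem thus reduces to upgrading $\Lambda$ to the full lattice $H_1(S;\Z)$ while keeping the length product under control. A purely abstract lattice reduction (say Mahler's passage from successive minima to a $\Z$-basis) costs a factor of order $(2g)!$, which is super-exponential in $g$ and therefore cannot yield a purely exponential bound $C^g$; the argument must instead exploit the surface-specific geometry.

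The plan is to induct on the genus $g$. The base case $g=1$ is the torus, covered by Minkowski's second theorem (or, equivalently, by Corollary \ref{surfaces.Z_2.intro} in that case). For the inductive step with $g\geq 2$, one selects a shortest non-separating simple closed geodesic $\gamma$ on $S$; by classical systolic inequalities for surfaces, $\ell(\gamma)\leq c_1\sqrt{\area(S)}$ for an absolute constant $c_1$. One then constructs a shortest closed geodesic $\gamma^{*}$ intersecting $\gamma$ in exactly one point, the existence of which is guaranteed by intersection-form considerations; a filling-type estimate (a surface analogue of the dual-systole bound) gives $\ell(\gamma^{*})\leq c_2\,\area(S)/\ell(\gamma)$, so the pair $(\gamma,\gamma^{*})$ contributes at most $c_1c_2\,\area(S)$ to the length product. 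The remaining $2g-2$ elements of the $\Z$-basis are produced by applying the inductive hypothesis to a surface of genus $g-1$ obtained by cutting along $\gamma$ and either capping off or regluing the two boundary circles in such a way that the area budget and the relevant homology classes are preserved.

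The main obstacle will be controlling the inductive step sharply enough to reach the explicit constant $C<2^{31}$. Two delicate points are the filling inequality bounding $\ell(\gamma^{*})$ in terms of $\area(S)/\ell(\gamma)$, which must be established with an explicit constant, and the reduction from the cut surface to a closed surface of genus $g-1$ without loss of $\Z$-basis information, which requires a version of the statement stable under cutting and gluing. Each inductive stage accumulates an absolute multiplicative constant coming from $c_1$, $c_2$, and the cutting/capping procedure; multiplied $g$ times and optimized via classical systolic and Bers/Buser-type bounds, these produce the final value $C<2^{31}$.
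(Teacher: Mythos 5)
Your inductive step is close in spirit to one half of the paper's argument (the paper also takes a systolic curve $\gamma$, a dual curve $\delta$ with $\ell(\gamma)\,\ell(\delta)\leq 2\area(S)$ via a coarea/filling lemma, cuts along $\gamma$, caps with round hemispheres, and inducts on genus), but as written the induction does not close. The problem is the area cost of capping: the capped surface has $\area(S')\leq \area(S)+\ell(\gamma)^2/\pi$, and the only systolic bound you invoke is $\ell(\gamma)\leq c_1\sqrt{\area(S)}$, so each stage can multiply the area by a fixed factor $1+c_1^2/\pi>1$. Since the area at stage $k$ enters the length product of every subsequent stage, the total overhead is of order $(1+c_1^2/\pi)^{g(g-1)/2}$, which is super-exponential in $g$ and destroys the claimed bound $C^g\area(S)^g$. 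Even using Gromov's sharper bound $\sys\lesssim(\log g/\sqrt{g})\sqrt{\area}$ one only gets an overhead of order $e^{cg\log^2 g}$, still not $C^g$. Your remark that the cutting must ``preserve the area budget'' points at the issue but does not resolve it.

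The paper resolves this with a dichotomy you are missing. If $\sys(S)\leq\sqrt{\pi\area(S)/(g-1)}$, capping adds at most $\area(S)/(g-1)$, so $\area(S')\leq\frac{g}{g-1}\area(S)$ and the accumulated factors telescope to $\bigl(\tfrac{g}{g-1}\bigr)^{g-1}\leq e$ per step, which the induction absorbs. If instead the systole is large, the paper does not cut at all: it invokes Theorem~1.1 of \cite{BPS12}, which under that systole hypothesis directly produces a full $\Z$-homology basis with $\ell(\gamma_k)\lesssim 2^{16}g\,\frac{\log(2g-k+2)}{2g-k+1}\sqrt{\area(S)/(4\pi(g-1))}$, and the product of these bounds gives $C^g\area(S)^g$ with $C<2^{31}$. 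Your proposal has no substitute for this ingredient, and the constant $2^{31}$ in the statement comes precisely from it. Two smaller points: your base case is not quite right --- a $\Z_2$-homology basis of the torus need not be a $\Z$-basis (e.g.\ classes $(1,0)$ and $(0,3)$), so one needs the Croke--Katz bound $\ell(\gamma_1)\ell(\gamma_2)\leq\tfrac{2}{\sqrt 3}\area$ for generators of $\pi_1$ rather than Corollary~\ref{surfaces.Z_2.intro}; and the filling estimate $\ell(\gamma^*)\leq c_2\area(S)/\ell(\gamma)$ must be proved with $\gamma$ length-minimizing in its class (the paper's Lemma~\ref{lemma.inter.systole} uses this minimality in the coarea argument), which your ``shortest non-separating geodesic'' does provide.
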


We also prove a similar result for non-orientable surfaces and conjecture the following.

\begin{conjecture}\label{conj.surfaces.Z.intro}
There exists a positive constant $c>0$ such that any Riemannian surface $S$ of first $\Z_2$-Betti number $b>0$ admits a $\Z_2$-homology basis induced by closed geodesics $\gamma_1,\ldots,\gamma_{b}$ whose length product satisfies the following inequality:
$$
\prod_{i=1}^{b} \ell(\gamma_i)\leq (c \, \log b)^{b} \cdot \left({\area(S)\over b}\right)^{b/2}.
$$
\end{conjecture}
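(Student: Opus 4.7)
\textbf{Proof proposal for Conjecture \ref{conj.surfaces.Z.intro}.} The target shape $(c\log b)^{b}(\area(S)/b)^{b/2}$ is exactly what one obtains by applying Minkowski's second principle to a sequence of successive systolic estimates, so the plan is to iteratively extract a homologically independent family of short geodesics whose $k$th length is controlled by a Buser--Sarnak-type logarithmic systolic bound on the surface with the first $k-1$ curves ``removed''. After rescaling the metric so that $\area(S)=b$, the desired bound reads $\prod_{i=1}^{b}\ell(\gamma_i)\leq (c\log b)^{b}$.

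The construction proceeds greedily. Let $\gamma_1$ be the shortest homotopically non-trivial closed geodesic whose $\Z_2$-homology class is non-zero. By the logarithmic systolic inequality of Gromov--Katz--Sabourau for surfaces of positive first $\Z_2$-Betti number, we have $\ell(\gamma_1)\leq C\,\sqrt{\area(S)/b}\,\log(b+1)$. Having chosen $\gamma_1,\ldots,\gamma_{k-1}$ whose classes span a $(k-1)$-dimensional subspace $V_{k-1}\subset H_1(S;\Z_2)$, let $\gamma_k$ be the shortest closed geodesic whose class lies outside $V_{k-1}$. The surface obtained by cutting along $\gamma_1,\ldots,\gamma_{k-1}$ (or, more efficiently, passing to the $\Z_2$-cover dual to $V_{k-1}$) has first $\Z_2$-Betti number at least $b-k+1$, and if one can transport a Gromov-type logarithmic systolic estimate to this setting, one obtains the key inequality
\begin{equation*}
\ell(\gamma_k)\ \leq\ C\,\sqrt{\frac{\area(S)}{b-k+1}}\,\log(b-k+2).
\end{equation*}
Multiplying these estimates and using Stirling's formula,
\begin{equation*}
\prod_{k=1}^{b}\ell(\gamma_k)\ \leq\ C^{b}\,\left(\prod_{k=1}^{b}\log(k+1)\right)\sqrt{\frac{\area(S)^{b}}{b!}}\ \leq\ (c\,\log b)^{b}\left(\frac{\area(S)}{b}\right)^{b/2},
\end{equation*}
which is precisely the conjectured inequality.

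The main obstacle, and the reason this is only a conjecture, is the ``successive systolic'' estimate at step $k$. The Gromov--Katz--Sabourau bound applies to the systole of a closed surface, but after selecting $\gamma_1,\ldots,\gamma_{k-1}$, the natural object controlling $\ell(\gamma_k)$ is a homological systole on a bordered (or covering) surface whose topology has been partly collapsed; transferring the logarithmic-in-$b$ gain to this setting is delicate because the isoperimetric profile used in Gromov's filling radius argument degenerates near the cut locus. A natural way to bypass this is to replace the greedy scheme by a global one: produce a harmonic $\Z_2$-cochain basis of $H^{1}(S;\Z_2)$ and apply the Minkowski-style comparison of Section~\ref{sec:stab} to the unit ball of an $L^{1}$-norm on cohomology, then use a surface-specific lower bound of order $(b/\area(S))^{b/2}/(c\log b)^{b}$ on the volume of this ball.

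An alternate route, especially in the hyperbolic range $g\geq 2$, is to first uniformize (conformally) and then exploit Buser--Parlier type short pants decompositions and cut systems on hyperbolic surfaces, combined with a quasiconformal comparison between the Riemannian metric and the hyperbolic one in its conformal class. The hard step becomes the distortion estimate: the conformal factor can blow up on small sets, but it must be controlled in an $L^{1}$ sense to keep the length product bounded. Either route leads to the same structural difficulty, namely upgrading the Buser--Sarnak logarithmic factor from a single-systole statement to the full sequence of successive homological systoles, which we leave as the principal open problem behind the conjecture.
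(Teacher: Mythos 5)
The statement you are asked to prove is stated in the paper as an open conjecture (Conjecture \ref{conj.surfaces.Z.intro}), and your proposal does not close it: the entire argument hinges on the ``key inequality'' $\ell(\gamma_k)\leq C\sqrt{\area(S)/(b-k+1)}\,\log(b-k+2)$ for the successive homological systoles, which you assert conditionally (``if one can transport a Gromov-type logarithmic systolic estimate to this setting'') and then explicitly concede is unproven. That is the genuine gap, and it is exactly the gap the authors identify. The difficulty is not merely technical: after cutting along (or unwrapping) $\gamma_1,\ldots,\gamma_{k-1}$ one loses control of both the area and the relation between short curves of the new surface and classes outside $V_{k-1}$ in the original one, so the logarithmic systolic inequality cannot simply be re-applied. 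Your Stirling computation at the end is fine, but it rests entirely on the unproved step.

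For comparison with what the paper actually establishes: your two regimes correspond precisely to the paper's partial results. In the large-systole regime, a single application of \cite[Theorem 1.1]{BPS12} already yields the full sequence of logarithmic length bounds and hence the conjectured inequality (Proposition \ref{prod.surfaces.sys.big.intro}); no greedy iteration is needed there. In the small-systole regime the paper runs essentially your greedy scheme (cut along the homological systole, cap with round hemispheres, induct), but because the only available bound on the extracted curve is the quadratic isosystolic-type estimate of Lemma \ref{lemma.inter.systole} rather than a logarithmic one, the induction only delivers the weaker Theorem \ref{surfaces.Z.intro} with bound $C^g\cdot\area(S)^g$, or the conjectured bound for just $g$ of the $2g$ curves (Proposition \ref{surfaces.gcurves.intro}). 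So your outline is a reasonable roadmap and consistent with the authors' own analysis, but it is a restatement of the open problem, not a proof.
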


There exist infinite sequences of hyperbolic surfaces of growing genus $g$ whose shortest homologically non-trivial closed geodesic (also known as the {\it homological systole}) has length growth rate at least $4/3 \cdot \log g$. The first known construction is due to Buser and Sarnak \cite{BS94}. As a consequence, the minimal length product of a $\Z_2$-homology basis cannot have an upper bound with asymptotic growth in terms of the genus any less than $({4/ 3} \cdot \log g )^{2g}$ for area equal to $4\pi(g-1)$. So Conjecture \ref{conj.surfaces.Z.intro}, if true, would be optimal up to some exponential factor.\\

We now present various partial results supporting this conjecture.\\

First, the analogous result for graphs is true. By graph we mean a finite simplicial complex of dimension $1$ and for which the notion of length $\ell$ of a subgraph (such as a cycle) is simply the number of its edges.
 
 \begin{theorem}
 Consider a connected graph $\Gamma$ of first $\Z_2$-Betti number $b\geq 2$.
 
 Then there exists a $\Z_2$-homology basis induced by cycles $\gamma_1,\ldots,\gamma_b$ whose length product satisfies the following inequality:
 $$
 \prod_{k=1}^{b} \ell(\gamma_k) \leq \left({4e}\log_2 b\right)^b \cdot \left({\ell(\Gamma)\over b}\right)^b.
 $$
\end{theorem}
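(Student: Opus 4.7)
I would mirror the greedy/matroidal strategy used in the manifold case, adapted to the combinatorial setting of graphs, together with a short-cycle lemma that controls how the $\Z_2$-span of the chosen cycles grows step by step.

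First I would normalize: without loss of generality, $\Gamma$ has minimum degree $\geq 3$. Indeed, degree-$1$ vertices may be pruned (their incident edges lie on no cycle) and degree-$2$ vertices suppressed (merging the two adjacent edges into a single edge whose length is the sum of the two). Both operations preserve $b_1(\Gamma;\Z_2)$, and do not increase $\ell(\Gamma)$. In the reduced graph one then has $|V(\Gamma)| \leq 2b$, because $|E(\Gamma)| \geq \frac{3}{2}|V(\Gamma)|$ and $b = |E(\Gamma)| - |V(\Gamma)| + 1$.

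I would next construct the basis greedily: let $\gamma_k$ be a cycle of minimal length subject to $[\gamma_k] \notin V_{k-1} := \langle [\gamma_1],\ldots,[\gamma_{k-1}] \rangle \subseteq H_1(\Gamma;\Z_2)$. Since $\Z_2$-cycle classes form a matroid, this yields a genuine basis of $H_1(\Gamma;\Z_2)$, and furthermore one of minimum total length $\sum_{k}\ell(\gamma_k)$ over all bases.

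The technical heart is the following \emph{key lemma}: for every $\Z_2$-subspace $V \subseteq H_1(\Gamma;\Z_2)$ of codimension $c \geq 1$, there is a cycle $\gamma$ in $\Gamma$ with $[\gamma]\notin V$ and
\[
\ell(\gamma) \;\leq\; \frac{C\,\ell(\Gamma)\,\log_2 b}{c}
\]
for a universal constant $C \leq 4$. I would prove this by a ball-growth argument. For each vertex $v$, consider metric balls $B(v,r)$; as $r$ increases, the $\Z_2$-subspace $W_v(r) \subseteq H_1(\Gamma;\Z_2)$ spanned by cycles contained in $B(v,r)$ grows, starting at $0$ and eventually equaling the full $H_1(\Gamma;\Z_2)$, hence meeting every complement of $V$. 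Let $r^*(v)$ be the first radius at which $W_v(r) \not\subseteq V$; by construction, $B(v,r^*(v))$ contains a cycle of length at most $2\,r^*(v)$ whose class lies outside $V$. The exponential growth of balls enforced by min-degree $3$, together with a volume/pigeonhole argument over the $\leq 2b$ vertices and the $c$ linearly independent ``missing directions'' in $H_1/V$, forces $\min_v r^*(v) \leq C\,\ell(\Gamma)\,\log_2 b / c$. I expect this combinatorial averaging to be the main obstacle, since one must balance the exponential volume growth in min-degree-$3$ graphs against the $c$-dimensional room available in $H_1/V$ in order to pin down the constant $C$.

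Finally I would combine: applying the key lemma at each step, since $V_{k-1}$ has codimension $b - k + 1$,
\[
\prod_{k=1}^{b} \ell(\gamma_k) \;\leq\; \prod_{k=1}^{b}\frac{C\,\ell(\Gamma)\,\log_2 b}{b-k+1} \;=\; \frac{(C\,\ell(\Gamma)\,\log_2 b)^b}{b!}.
\]
Stirling's bound $b! \geq (b/e)^b$ then gives
\[
\prod_{k=1}^{b} \ell(\gamma_k) \;\leq\; (C e \log_2 b)^b \cdot \left(\frac{\ell(\Gamma)}{b}\right)^b,
\]
and with $C \leq 4$ this yields the stated inequality. An alternative (heavier) route that I would keep as a backup is to bypass the key lemma via low-distortion spanning tree embeddings of Bartal/FRT type, which directly give a spanning tree $T$ whose fundamental cycle basis has total length $O(\ell(\Gamma) \log b)$; the product bound then follows by AM-GM, but this route produces a larger absolute constant than the ball-growth argument.
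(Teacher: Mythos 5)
Your overall architecture (a per-step bound of the form $\ell(\gamma_k)\le C\,\ell(\Gamma)\log_2 b/(b-k+1)$, multiplied out and finished with Stirling) is exactly the right shape and matches what the paper's induction delivers. But all the content sits in your ``key lemma,'' and as written it is a genuine gap. First, the lemma for $c=b$, $V=0$ is precisely the Bollob\'as--Szemer\'edi--Thomason systolic inequality for graphs, which is a nontrivial published theorem; your ball-growth sketch does not reprove it. After you suppress degree-$2$ vertices you are in the weighted setting, and metric balls in a weighted min-degree-$3$ graph do \emph{not} grow exponentially in the radius (a ball of radius $r$ may consist of a single vertex with three very long incident edges); overcoming exactly this is the point of the Bollob\'as--Szemer\'edi argument. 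Second, the passage from $V=0$ to a general codimension-$c$ subspace with the crucial $1/c$ gain is only asserted: the first radius at which $W_v(r)\not\subseteq V$ is not controlled by a girth-type estimate, since the first (indeed the first $b-c$) independent cycles appearing in a growing ball may all lie in $V$, and a cycle contained in $B(v,r)$ need not have length $\le 2r$. Your ``pigeonhole over the $c$ missing directions'' is the step that would have to do all the work, and no mechanism for it is given.

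The paper avoids the general key lemma entirely. It quotes Bollob\'as--Szemer\'edi--Thomason as a black box to get one short homologically nontrivial cycle $\gamma_1$ with $\ell(\gamma_1)\le 4\frac{\log_2 b}{b-1}\ell(\Gamma)$, deletes a single edge of $\gamma_1$ to obtain a connected graph $\Gamma_1$ with $b_1(\Gamma_1)=b-1$ and $\ell(\Gamma_1)\le\ell(\Gamma)$, and inducts; independence is automatic because cycles of $\Gamma_1$ avoid the deleted edge while $\gamma_1$ uses it, so $H_1(\Gamma;\Z_2)=H_1(\Gamma_1;\Z_2)\oplus\Z_2[\gamma_1]$. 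This yields $\prod_k\ell(\gamma_k)\le\frac{4^{b-1}}{(b-1)!}\prod_{k=2}^{b}\log_2 k\cdot\ell(\Gamma)^b$, from which the stated bound follows by $b!\ge (b/e)^b$. If you want to keep your greedy formulation, the cleanest repair is to deduce the key lemma \emph{from} BST rather than from ball growth: given $V$ of dimension $b-c$, choose $b-c$ edges whose coordinate functionals restrict to a basis of $V^\ast$ and delete them; the surviving cycle space is a complement of $V$ of dimension $c$, and applying BST to an appropriate component produces a cycle outside $V$ of length $O(\ell(\Gamma)\log_2 b/c)$. Either way, the theorem reduces to the cited systolic inequality plus elementary bookkeeping; it is not provable by the Moore-bound heuristic alone.
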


This result is a straightforward consequence of previous work \cite{BT97, BS02} and an induction argument.\\

Secondly, the conjecture for surfaces with large enough systole can be deduced from \cite[Theorem 1.1]{BPS12}.

\begin{proposition}\label{prod.surfaces.sys.big.intro}
Consider an orientable Riemannian surface $S$ of genus $g$.
Suppose that its homological systole satisfies
$$
\sys(S) \geq \sqrt{\area(S)\over 4\pi (g-1)}.
$$
Then there exists a $\Z_2$-homology basis induced by closed geodesics $\gamma_1,\ldots,\gamma_{2g}$ whose length product satisfies the following inequality:
$$
\prod_{i=1}^{2g} \ell(\gamma_i)\leq \left(C \log g \right)^{2g} \cdot \left({\area(S)\over g}\right)^g
$$
for some positive constant $C<2^{31}$.
\end{proposition}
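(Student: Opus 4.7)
The plan is to exploit the systole hypothesis to rescale $S$ into a hyperbolic-type regime where \cite[Theorem 1.1]{BPS12} applies, extract a short homology basis of closed geodesics, and then scale back to recover the claimed length-product bound.

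First I would rescale the metric on $S$ by the factor
$$
\lambda := \sqrt{\frac{4\pi(g-1)}{\area(S)}},
$$
so that the rescaled surface $S_\lambda$ has area exactly $4\pi(g-1)$, the Gauss--Bonnet value for a hyperbolic surface of genus $g$. The hypothesis $\sys(S) \geq \sqrt{\area(S)/(4\pi(g-1))}$ translates precisely into $\sys(S_\lambda) \geq 1$. In this normalized regime I would invoke \cite[Theorem 1.1]{BPS12} to produce closed geodesics $\tilde\gamma_1,\ldots,\tilde\gamma_{2g}$ on $S_\lambda$ inducing a $\Z_2$-homology basis, each of length at most $C_0 \log g$ for an explicit constant $C_0$ (of the order of, but a controlled factor below, $2^{31}$).

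Scaling back, the corresponding closed geodesics $\gamma_i$ on $S$ satisfy $\ell(\gamma_i)=\ell_{S_\lambda}(\tilde\gamma_i)/\lambda$, so
$$
\prod_{i=1}^{2g} \ell(\gamma_i) \;\leq\; \frac{(C_0\log g)^{2g}}{\lambda^{2g}} \;=\; (C_0\log g)^{2g}\cdot \left(\frac{\area(S)}{4\pi(g-1)}\right)^g.
$$
To bring this into the form stated, I would use $4\pi(g-1)\geq 2\pi g$ for $g\geq 2$ to replace the denominator by $g$, absorbing the resulting constant $(2\pi)^{-1/2}$ into the $\log g$ prefactor by enlarging $C_0$ slightly to the desired $C<2^{31}$. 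The one substantive ingredient is the cited theorem; everything else is scale-invariance bookkeeping. The only place the argument could slip is in confirming that the precise statement of \cite[Theorem 1.1]{BPS12} covers arbitrary Riemannian surfaces with systole at least $1$ and area at most $4\pi(g-1)$, rather than merely hyperbolic ones---if the cited result is stated only in the hyperbolic case, one would first need a comparison step (for instance via a conformal or uniformizing argument) to reduce to that setting, and this is where I expect the bulk of the technical effort to lie.
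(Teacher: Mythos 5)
Your overall strategy is the paper's: invoke \cite[Theorem 1.1]{BPS12} and multiply the resulting length bounds (your rescaling step is harmless but unnecessary, since that theorem is stated for arbitrary Riemannian surfaces with the area and the systole entering the bound explicitly, so the worry you flag about a hyperbolic-only statement does not arise). The genuine gap is in what you attribute to that theorem. It does not produce $2g$ basis loops each of length at most $C_0\log g$ on the normalized surface; it produces loops with the \emph{graded} bound
$$
\ell(\gamma_k)\ \leq\ 2^{16}\cdot g\cdot\frac{\log(2g-k+2)}{2g-k+1}\cdot\sqrt{\frac{\area(S)}{4\pi(g-1)}},\qquad k=1,\ldots,2g,
$$
which for $k$ close to $2g$ is of order $g$, not $\log g$; a uniform $O(\log g)$ bound on every element of a homology basis is not available in general, as the grading in \cite{BPS12} is essentially optimal. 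So your one-line product estimate $(C_0\log g)^{2g}$ does not follow as written. The conclusion is still reachable, but it requires the computation your proposal skips: multiplying the graded bounds produces the factor $\prod_{k=1}^{2g}\frac{g}{2g-k+1}=\frac{g^{2g}}{(2g)!}$, which by Stirling's formula is at most roughly $(e/2)^{2g}$, and only after absorbing this (together with $\prod_k\log(2g-k+2)\leq(\log(2g+1))^{2g}$) does one land on $\left(C\log(2g)\right)^{2g}\cdot\left(\area(S)/g\right)^{g}$ with the advertised explicit constant $C<2^{31}$. Finally, \cite[Theorem 1.1]{BPS12} yields a $\Z$-homology basis, whereas the statement asks for a $\Z_2$-homology basis; you should add the (easy, but necessary) remark that $H_1(S;\Z_2)\simeq H_1(S;\Z)\otimes\Z_2$, so the same geodesics induce the required $\Z_2$-basis.
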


Next we can ask for the minimal length product over $\Z_2$-homologically independent families of closed geodesics of cardinality less than $2g$. In that direction, we are able to prove the analog of Conjecture \ref{conj.surfaces.Z.intro} up to cardinality $g$.

\begin{proposition}\label{surfaces.gcurves.intro}
Consider an orientable Riemannian surface $S$ of genus $g$. 

Then there exists $\Z_2$-homologically independent closed geodesics $\gamma_1,\ldots,\gamma_g$ whose length product satisfies the following inequality:
$$
\prod_{k=1}^{g}\ell(\gamma_k)\leq \left( C \log g\right)^g \left(\frac{\area(S)}{g}\right)^{g/2}.
$$
for some positive constant $C<2^{31}$.
\end{proposition}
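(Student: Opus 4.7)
My plan is to prove the bound by a dichotomy based on the size of the homological systole, leveraging Proposition \ref{prod.surfaces.sys.big.intro} in the large-systole regime. If $\sys(S) \geq \sqrt{\area(S)/(4\pi(g-1))}$, I would invoke Proposition \ref{prod.surfaces.sys.big.intro} to produce a $\Z_2$-homology basis of closed geodesics $\alpha_1, \ldots, \alpha_{2g}$ satisfying $\prod_{k=1}^{2g} \ell(\alpha_k) \leq (C\log g)^{2g} (\area(S)/g)^g$. After renumbering so that the $\alpha_k$ are ordered by increasing length, any $g$ of them form a $\Z_2$-independent family (as a subfamily of a basis), and the elementary observation that the product of the $g$ smaller factors of a product of $2g$ positive reals is at most the square root of the full product gives
$$
\prod_{k=1}^{g} \ell(\alpha_k) \leq \Bigl(\prod_{k=1}^{2g} \ell(\alpha_k)\Bigr)^{1/2} \leq (C\log g)^g (\area(S)/g)^{g/2},
$$
which is the desired estimate upon setting $\gamma_k := \alpha_k$.

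The complementary regime $\sys(S) < \sqrt{\area(S)/(4\pi(g-1))}$ requires a finer argument by induction on the genus. Let $\gamma_1$ be a shortest $\Z_2$-homologically non-trivial closed geodesic; since $\Z_2$-non-triviality prevents $\gamma_1$ from bounding a subsurface, it is non-separating, and cutting $S$ along $\gamma_1$ yields a connected surface of genus $g-1$ with two boundary components of length $\ell(\gamma_1)$ each. After capping these off by two small disks of negligible area (or, equivalently, formulating the induction directly for bordered surfaces), one obtains a closed genus $(g-1)$ surface $\widetilde S$ with $\area(\widetilde S)$ only marginally larger than $\area(S)$, to which the induction hypothesis applies; this produces $g-1$ further $\Z_2$-independent closed geodesics. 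Pushing these back to $S$ and tightening in their free homotopy classes gives $\gamma_2, \ldots, \gamma_g$, and combining $\ell(\gamma_1) \leq \sqrt{\area(S)/g}$ (up to absolute constants) with the inductive bound on the remaining factor produces the announced inequality.

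The main obstacle is the induction step in the small-systole regime. One must (i) verify that the surgery preserves $\Z_2$-independence of the homology classes when the curves from $\widetilde S$ are transferred back to $S$, which can be arranged by placing the capping disks in a small neighborhood of $\gamma_1$ and using an intersection-form argument to check that $[\gamma_1]$ remains independent from the pulled-back classes; and (ii) track carefully how the logarithmic factors compose across $g$ iterations so that the final product remains of order $(C\log g)^g$ rather than a larger quantity such as $\prod_{k=1}^g \log k$. The second point will likely force absorbing some lower-order terms into the final constant $C$, and one must verify that this remains compatible with keeping $C < 2^{31}$ throughout.
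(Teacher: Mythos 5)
Your proposal follows essentially the same route as the paper: the large-systole case is handled by Proposition \ref{prod.surfaces.sys.big.intro} together with the square-root trick on the $g$ shortest basis elements, and the small-systole case by cutting along the homological systole, capping the boundary (the paper uses round hemispheres of total area $\ell(\gamma_1)^2/\pi$, which is exactly the ``negligible'' area you need and also guarantees that curves can be pushed off the caps without increasing length), and inducting on the genus. The two obstacles you flag resolve just as you anticipate: independence is checked via the intersection form, and the logarithms do not compound since the genus-$(g-1)$ induction hypothesis already carries the uniform factor $\log(2(g-1))\le\log(2g)$, with the constants $\sqrt{2\pi}\,e$ absorbed into $C\log(2g)$.
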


The family of loops appearing in the previous proposition could possibly belong to the same Lagrangian subspace with respect to the intersection form. In the transversal direction and in the particular case of hyperbolic metrics, we are able to prove the following.

\begin{theorem}\label{th.inter.hyp.intro}
There exists a positive constant $c>0$ such that any orientable genus $g\geq 2$ hyperbolic surface $S_0$ has a pair of simple closed geodesics $\gamma$ and $\delta$ pairwise intersecting once and whose length product satisfies the following inequality:
$$
\ell(\gamma) \cdot \ell(\delta)\leq \left(c \, \log (2g)\right)^2.
$$
\end{theorem}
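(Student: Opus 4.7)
The plan is to take $\gamma$ to be a shortest non-separating simple closed geodesic on $S_0$ and to construct $\delta$ as the geodesic representative of a concatenation of $\alpha$ with a sub-arc of $\gamma$, where $\alpha$ is a shortest arc in the cut surface joining the two copies of $\gamma$. I would begin by invoking the classical logarithmic upper bound on the length of the shortest non-separating simple closed geodesic on a closed hyperbolic surface of genus $g\geq 2$: one has $\ell(\gamma)\leq c_1\log(2g)$ for a universal constant $c_1$, which follows from a Bers-type thick--thin analysis of hyperbolic surfaces.

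I would then cut $S_0$ along $\gamma$ to obtain a connected hyperbolic surface $\Sigma$ of area $4\pi(g-1)$ with two geodesic boundary components $\gamma^{\pm}$, each of length $\ell(\gamma)$. Let $\alpha$ be a shortest geodesic arc in $\Sigma$ connecting $\gamma^{+}$ to $\gamma^{-}$; standard variational arguments show $\alpha$ is simple and perpendicular to both boundary components. Gluing $\Sigma$ back to $S_0$ and concatenating $\alpha$ with the shorter of the two sub-arcs of $\gamma$ between its endpoints yields a piecewise-geodesic simple closed curve $c$ of geometric intersection number one with $\gamma$. Its free-homotopy-class geodesic representative $\delta$ is then a simple closed geodesic intersecting $\gamma$ exactly once (by the bigon criterion), with $\ell(\delta)\leq \ell(\alpha)+\ell(\gamma)/2$.

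The heart of the argument is bounding $\ell(\alpha)$. Using one-sided Fermi coordinates along $\gamma^{+}$ in $\Sigma$, the Fermi strip of width $r$ has area $\ell(\gamma)\sinh(r)$ whenever it embeds in $\Sigma$. Since the strips of width $r=\ell(\alpha)/2$ around $\gamma^{+}$ and around $\gamma^{-}$ are disjoint subsets of $\Sigma$, the embedded case yields $2\ell(\gamma)\sinh(\ell(\alpha)/2)\leq 4\pi(g-1)$, hence $\ell(\alpha)\leq 2\operatorname{arcsinh}(2\pi(g-1)/\ell(\gamma))$. Combined with $\ell(\gamma)\leq c_1\log(2g)$ and the elementary fact that $x\mapsto x\operatorname{arcsinh}(Cg/x)$ is $O((\log g)^2)$ on the interval $(0,c_1\log(2g)]$, this gives $\ell(\gamma)\ell(\delta)\leq \ell(\gamma)\ell(\alpha)+\ell(\gamma)^2/2\leq (c\log(2g))^2$ for a universal constant $c$.

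The main obstacle is verifying embeddedness of the Fermi strip up to radius $\ell(\alpha)/2$. If it fails at some $r<\ell(\alpha)/2$, a perpendicular geodesic returns to $\gamma^{+}$ with length at most $2r$; together with a sub-arc of $\gamma^{+}$ this produces an essential simple closed curve in $\Sigma$ of length at most $2r+\ell(\gamma)/2$. The non-separating case (for $r$ small) contradicts the minimality of $\ell(\gamma)$, whereas the separating case reduces the problem, via the thick--thin decomposition of $\Sigma$, to iterating the argument inside the component of $\Sigma$ containing $\gamma^{-}$. Handling this case analysis cleanly---and controlling losses from possible repeated reductions through Margulis tubes---is where the genuine technical difficulty lies.
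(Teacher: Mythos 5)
Your construction of $\delta$ from $\gamma$ and the arc $\alpha$, and the final numerics (the bound $x\mapsto x\operatorname{arcsinh}(Cg/x)=O((\log g)^2)$), are fine, but the proof has a genuine gap exactly where you locate it, and the gap is not a routine technicality. Your area estimate $\operatorname{area}(N_r(\gamma^+))\geq \ell(\gamma)\sinh r$ requires the Fermi strip to be embedded; without embeddedness the coarea formula only yields $\operatorname{area}(N_r)\geq r\,\ell(\gamma)$, which gives $\ell(\gamma)\cdot\ell(\alpha)=O(g)$ (this is essentially Lemma \ref{lemma.inter.systole} of the paper) and not $O((\log g)^2)$. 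Your proposed dichotomy for a failure of embedding at radius $r<\ell(\alpha)/2$ does not close: the returning orthogeodesic must come back to the \emph{same} boundary copy $\gamma^+$ (otherwise it is an arc from $\gamma^+$ to $\gamma^-$ of length $2r<\ell(\alpha)$, contradicting minimality of $\alpha$), so the resulting essential curve is disjoint from $\gamma$ in $S_0$ and has length at most $2r+\ell(\gamma)/2$. In the non-separating sub-case this contradicts minimality of $\gamma$ only when $2r+\ell(\gamma)/2<\ell(\gamma)$, i.e.\ $r<\ell(\gamma)/4$, which is far short of the needed range $r<\ell(\alpha)/2$; and in the separating sub-case (a very short separating curve choking off the collar of $\gamma$) there is no contradiction at all. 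The iteration ``through Margulis tubes'' that you sketch to handle this is precisely the hard part of the theorem, and as written there is no control on the number of reductions or on the accumulated losses, so the proof is incomplete at its crux.

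For comparison, the paper avoids the collar-growth argument entirely: in the thick case ($\operatorname{sys}(S_0)\geq \eps_0$) it applies the short homology basis theorem of \cite{BPS12} and observes that among the first $g+1$ basis curves two must intersect; in the thin case it excises the collars of all short homologically non-trivial geodesics, reglues to obtain a surface $S_0'$ with $\operatorname{sys}(S_0')\geq\eps_0$ and no more area, applies \cite{BPS12} there, and then carefully transports the resulting pair of intersecting curves back to $S_0$, paying only a bounded multiplicative cost through each reinserted collar. If you want to pursue your direct approach, the separating obstruction forces you into essentially the same surgery on thin parts, so you should either carry that out in full or quote \cite{BPS12} as the paper does.
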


\bigskip

The paper is organized as follows. The next section is dedicated to the proof of Theorem \ref{manifolds.Z_2.intro} and its Corollaries \ref{surfaces.Z_2.intro}, \ref{connectedsum.Z_2.intro} and \ref{nb.geodesics.intro}. In the last section we treat in detail the case of surfaces, mainly focusing on the orientable case. \\

\noindent {\bf Acknowledgements.} We are grateful to A. Abdesselam, I. Babenko, B. Kahn and J. Milnor for valuable exchanges. We are also indebted to J. Gutt whose proof of a symplectic analog of Minkowski's first theorem (see Lemma \ref{LemGut}) inspired a mechanism used in the proof of Theorem \ref{manifolds.Z_2}.\\

\section{Minimal length product over $\Z_2$-homology bases for manifolds} \label{sec:manifolds}%

In this section, we prove Theorem \ref{manifolds.Z_2.intro} and derive Corollaries \ref{surfaces.Z_2.intro}, \ref{connectedsum.Z_2.intro} and \ref{nb.geodesics.intro}.

\subsection{From cup product to length product} %

Let $M$ be a Riemannian manifold of dimension $n$ whose first cohomology group with $\Z_2$-coefficients is non-trivial. Set
$$
b:=\dim H^1(M;\Z_2)>0.
$$
Given a non-zero cohomological class $\alpha \in H^1(M;\Z_2)$, we define its {\it length} by
$$
L(\alpha):=\inf \{\ell(\gamma) \mid \gamma \, \, \text{is a closed curve with} \, \, \alpha[\gamma]\neq 0\}
$$
where $[\gamma]$ denotes the homology class in $H_1(M;\Z_2)$ corresponding to the curve $\gamma$.
The following result was proved in \cite{BK16}.

\begin{theorem}\label{th:cup}
Let $M$ be a Riemannian manifold of dimension $n$ and suppose that there exist (not necessarily distinct) cohomology classes $\alpha_1,\ldots,\alpha_n$ in $H^1(M;\Z_2)$ whose cup product $\alpha_1 \cup \ldots \cup \alpha_n \neq 0 \in H^n(M;\Z_2)$. 
Then
$$
 \prod_{k=1}^n L(\alpha_k)\leq n^n \cdot \vol \, (M).
$$
\end{theorem}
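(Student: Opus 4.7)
The plan is to convert the cup-product hypothesis into a degree statement for a map $F \colon M \to T$ into a product of circles, and then extract the volume inequality via the coarea formula and Hadamard's inequality.

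\smallskip

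\textbf{Step 1 (circle-valued Lipschitz representatives).} For each $i$, I would construct a $1$-Lipschitz map $f_i \colon M \to S^1_i$, where $S^1_i = \R/\ell_i\Z$ is a circle of prescribed length $\ell_i$, such that $f_i^*$ sends the generator of $H^1(S^1_i;\Z_2)$ to $\alpha_i$. Because $\alpha_i$ is only a mod $2$ class, I would carry out the construction on the double cover $\tilde M_i \to M$ associated to $\alpha_i$: build a $1$-Lipschitz function on $\tilde M_i$ with a prescribed equivariance property under the deck involution, then push it forward to $f_i \colon M \to S^1_i$. In the $\Z$-coefficient setting one can take $\ell_i$ arbitrarily close to $L(\alpha_i)$, but in the $\Z_2$-coefficient setting this direct optimization is obstructed and one must settle for $\ell_i$ comparable to $L(\alpha_i)/n$. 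It is precisely this loss factor that eventually produces the constant $n^n$.

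\smallskip

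\textbf{Step 2 (cup product to degree).} Form the product $F := (f_1, \ldots, f_n) \colon M \to T := S^1_1 \times \cdots \times S^1_n$. Naturality of cup product yields
$$
F^*(\omega_1 \cup \cdots \cup \omega_n) \;=\; \alpha_1 \cup \cdots \cup \alpha_n \;\neq\; 0,
$$
where $\omega_i$ generates $H^1(S^1_i;\Z_2)$. Since $\omega_1 \cup \cdots \cup \omega_n$ generates $H^n(T;\Z_2)$, the map $F$ has nonzero $\Z_2$-degree, so every regular value of $F$ has an odd (in particular nonzero) number of preimages.

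\smallskip

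\textbf{Step 3 (coarea + Hadamard).} Surjectivity at regular values combined with the coarea formula gives
$$
\int_M |\text{Jac}(F)| \, d\vol \;\geq\; \vol(T) \;=\; \prod_{i=1}^n \ell_i.
$$
Since each $f_i$ is $1$-Lipschitz, Hadamard's inequality bounds the Jacobian of $F$ pointwise by $\prod_i \|df_i\| \leq 1$. Therefore $\prod_i \ell_i \leq \vol(M)$, and substituting $\ell_i \gtrsim L(\alpha_i)/n$ from Step 1 yields
$$
\prod_{k=1}^n L(\alpha_k) \;\leq\; n^n \cdot \vol(M).
$$

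\smallskip

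\textbf{Main obstacle.} The heart of the proof is Step 1. In the $\Z$-coefficient case, an infinite cyclic cover carries a $1$-Lipschitz function with exact translation period $L(\alpha)$, giving a circle-valued map of optimal length. For $\Z_2$-classes one only has a double cover with a $\Z_2$-deck action, so a direct lifting to $\R$ is obstructed and the analogous construction cannot reach the optimal target length without accepting some loss. Quantifying this loss by a dimensional factor---whether via a careful equivariant averaging on $\tilde M_i$ or via an auxiliary $\Z_2^n$-cover built from all the $\tilde M_i$ simultaneously---is the crucial technical point, and it is precisely where the constant $n^n$ arises.
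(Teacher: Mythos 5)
Your proposal has a fatal gap at Step~1, and it occurs before any quantitative issue arises: a class $\alpha\in H^1(M;\Z_2)$ is in general \emph{not} induced by any circle-valued map. Since $[M,S^1]\cong H^1(M;\Z)$, the classes obtainable as $f^*\omega$ for $f\colon M\to S^1$ are exactly those in the image of the reduction $H^1(M;\Z)\to H^1(M;\Z_2)$, and this map need not be surjective. The paper's own Corollary~\ref{connectedsum.Z_2.intro} is a case in point: for $M=\R P^3\#\R P^3$ one has $H^1(M;\Z)=0$ while $H^1(M;\Z_2)\cong\Z_2^2$, so no nonconstant homotopy class of maps $M\to S^1$ exists, yet the theorem must apply to these classes. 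Your attempted workaround via the double cover $\widetilde M_i$ does not help: a $1$-Lipschitz function on $\widetilde M_i$ that descends to a circle-valued map on $M$ with the required equivariance (deck involution acting as translation by half the circle length) is precisely an integral lift of $\alpha_i$, which is obstructed exactly when $\alpha_i$ is not a mod~$2$ reduction. The natural classifying target for $\Z_2$ classes is $\R P^\infty$, and once the target is not a circle the coarea--Hadamard mechanism of Steps~2--3 no longer produces a product of lengths. Separately, even in the representable case you defer the entire quantitative content (the claim $\ell_i\gtrsim L(\alpha_i)/n$) to an unproved "equivariant averaging," so the constant $n^n$ is never actually derived.

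The paper's proof (from \cite{BK16}, following Guth) avoids circle-valued maps entirely. Ordering the classes by length, one takes a minimal hypersurface $Z_{n-1}$ Poincar\'e dual to $\alpha_n$, restricts the remaining classes to it (their lengths do not decrease), and iterates down to a closed geodesic $Z_1$. One then grows nested metric neighborhoods $D_1\subset D_2\subset\cdots\subset D_n$ of radii $R_k=L(\alpha_k)/(2n)$; Gromov's curve-factoring lemma shows that $Z_{k-1}$ cannot be killed in the relative homology of $D_k(r)$ for $r<R_k$, so minimality of $Z_{k-1}$ forces $\vol_{k-1}(\partial D_k(r))\geq 2\vol_{k-1}(Z_{k-1}\cap D_k(r))$, and the coarea formula gives $\vol(D_n)\geq 2^nR_1\cdots R_n$, i.e.\ the stated inequality. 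If you want a coarea-type argument in your style, this Poincar\'e-dual/minimal-hypersurface route is the correct $\Z_2$ substitute for the calibration argument you propose, which is genuinely limited to real or integral coefficients.
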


For the reader's convenience, we sketch the proof in the $3$-dimensional case and refer to \cite{BK16} for the general case and further details.

\begin{proof}[Sketch of proof.] The proof is inspired by Guth's proof of isosystolic inequality, see \cite{Gut10}. So we assume in the sequel $n=3$. Furthermore suppose that we have ordered our cohomological classes such that $L(\alpha_1)\leq L(\alpha_2)\leq L(\alpha_3)$.\\

Choose a hypersurface $Z_2$ of $M^3$ with the following properties:
\begin{enumerate}[i)]
\item $Z_2$ is Poincar\'e dual to $\alpha_3$. In particular:
\begin{itemize}[label=$\cdot$,itemsep=1ex,leftmargin=0.2cm]
\item for any $z \in H_1(M;\Z_2)$, $\alpha_3(z)=z \cap [Z_2]$ where $\cdot \cap \cdot$ denotes the intersection number modulo $2$ between two cycles of complementary dimension;
\item for any $\omega \in H^2(M;\Z_2)$, $\omega [Z_2]=\omega \cup \alpha_3 [M]$;
\end{itemize}
\item $Z_2$ is (almost) minimal\footnote{The notion of almost minimality is that $Z_2$ is minimal for area among hypersurfaces in its cohomological class up to some choosen small additive constant. Because this constant can be fixed as small as wanted, the proof remains the same if we actually suppose that $Z_2$ is minimal. We do so and refer the reader to the original proof in \cite{BK16} for more details.
} for the area among hypersurfaces in the class $[Z_2]$.
\end{enumerate}

Now observe that the restrictions of $\alpha_1,\alpha_2$ to $Z_2$ (denoted by $\alpha'_1,\alpha'_2$) satisfy
\begin{itemize}[label=$\cdot$,itemsep=1ex] 
\item $\alpha'_1 \cup \alpha'_2 [Z_2] = \alpha_1 \cup \alpha_2 \cup \alpha_3 [M] =1$;
\item $L(\alpha'_i)\geq L(\alpha_i)$ for $i=1,2$.
\end{itemize}

Define $Z_1$ as a minimal closed geodesic of $Z_2$ Poincar\'e dual to $\alpha'_2$. 
In particular $\alpha_2(z)=z \cap [Z_1]$ for any $z \in H_1(Z_2;\Z_2)$.\\

Pick some point $z_0 \in Z_1$ and define the following nested sets:
$$
\begin{array}{c}
D_1:= \left\{z \in Z_1 \mid d(z,z_0)\leq R_1\right\} \\
\bigcap \\
D_2:=\left\{z \in Z_2 \mid d(z,D_1)\leq R_2\right\} \\
\bigcap \\
D_3:=\left\{z \in M \mid d(z,D_2)\leq R_3\right\}
\end{array}
$$
with $R_i:={L(\alpha_i) / 6}$ for $i=1,2,3$. Obviously $\ell(D_1)=2R_1$.\\

By the coarea formula we have 
$$
\area(D_2)=\int_0^{R_2} \ell(\partial D_2(r)) dr
$$
where $D_2(r):=\{z \in Z_2 \mid d(z,D_1)\leq r\}$.\\

But $Z_1\cap D_2(r)=0 \in H_1(D_2(r),\partial D_2(r); \Z_2)$ for $r \in (0,R_2)$. 
If not, by Lefschetz-Poincar\'e duality, there exists a closed geodesic $\gamma$ with non-zero homology class in $H_1(D_2(r);\Z_2)$ such that $[\gamma] \cap Z_1=\alpha_2[\gamma] =1$. Observe that by construction $d(z,z_0)< R_1+R_2 \leq L(\alpha_2)/2$ for any $z \in D_2(r)$.
According to Gromov's curve factoring lemma (see \cite{Gut10}), the loop $\gamma$ decomposes into a sum $\sum_i \gamma_i$ of loops of length less than $L(\alpha_2)$. But for some index $i_0$ we must have $\alpha_2[\gamma_{i_0}]=1$ which gives a contradiction. 
Thus $\ell(\partial D_2(r))\geq 2 \ell(Z_1 \cap D_2(r))\geq 4R_1$ for any $r \in (0,R_2)$ by minimality of $Z_1$ from which we derive by integration that $\area(D_2)\geq 4R_1R_2$.\\

In exactly the same way we have 
$$
\vol(D_3)=\int_0^{R_3} \area(\partial D_3(r)) dr
$$
with $D_3(r):=\{z \in M \mid d(z,D_2)\leq r\}$. Because $Z_2\cap D_3(r)=0 \in H_2(D_3(r),\partial D_3(r); \Z_2)$ for $r \in (0,R_3)$, the minimality of $Z_2$ implies the lower bound $$\area(\partial D_3(r))\geq 2 \area(Z_2 \cap D_3(r))\geq 8R_1R_2$$
from which we derive by integration that $\vol(D_3)\geq 8R_1R_2R_3$.\\

This implies the desired inequality
$$
L(\alpha_1) \cdot L(\alpha_2) \cdot L(\alpha_3)\leq 3^3 \cdot \vol \, (M).
$$
\end{proof}

To fully exploit this result, we introduce the following map. As the dimension $b$ of the first cohomology group $H^1(M;\Z_2)$ is positive, the $\Z_2$-fundamental class of $M$ induces a map
\begin{eqnarray*}
F_M: H^1(M;\Z_2) \times \ldots \times H^1(M;\Z_2) & \to & \Z_2\\
(\beta_1,\ldots,\beta_n)&\mapsto & \beta_1\cup \ldots \cup \beta_n [M].
\end{eqnarray*}
This map is $n$-multilinear and symmetric.\\

\subsection{Algebraic invariants of multilinear forms over $\Z_2$-vector spaces}\label{sec:alginv} %

We now introduce the class of invariants of multilinear maps which will be relevant to our purpose.\\

Let $V$ be a $\Z_2$-vector space of dimension $b$ and $n$ a positive integer. We denote by ${\mathcal F}_n(V)$ the set of $n$-multilinear forms on $V$. If we fix a basis ${\mathcal B}:=(e_1,\ldots,e_b)$ of $V$, an element $F \in {\mathcal F}_n(V)$ decomposes as
$$
F=\sum_{(i_1,\ldots,i_n) \in \llbracket 1,b \rrbracket^n} F_{i_1,\ldots,i_n} \, e_{i_1}^\ast\otimes \ldotsÊ\otimes e_{i_n}^\ast
$$
where $F_{i_1,\ldots,i_n}:=F(e_{i_1},\ldots,e_{i_n})\in \Z_2$. The $n$-dimensional matrix $\hat{A}_{\mathcal B}(F):=(F_{i_1,\ldots,i_n})$ with $\Z_2$-entries thus encodes the multilinear map $F$.

\begin{definition}
An algebraic invariant over ${\mathcal F}_n(V)$ is a map 
\begin{eqnarray*}
\kappa: {\mathcal F}_n(V) & \to & \Z_2\\
F&\mapsto & \kappa(F)
\end{eqnarray*}
which coincides for some homogeneous polynomial with its evaluation on the entries of the matrix $\hat{A}_{\mathcal B}(F)$ for any basis ${\mathcal B}$ of $V$.
\end{definition}

An example of such an algebraic invariant over ${\mathcal F}_2(V)$ is given by the reduction modulo $2$ of the determinant. More precisely, the expression defined by
$$
\sum_{\sigma \in {\mathcal S}_n} \prod_{k=1}^b F_{k,\sigma(k)}
$$
does not depend on the basis in which we have decomposed $F$. This defines an algebraic invariant we will denote by $\Det_2$. If $F$ is symmetric, Milnor \cite{Mil70} remarks that the term corresponding to $\sigma$ cancels the term corresponding to $\sigma^{-1}$ unless $\sigma=\sigma^{-1}$. Therefore for symmetric bilinear forms
we can write
$$
\Det_2 F=\sum_{P_1\cup\ldots \cup P_q =\{1,\ldots,b\}} \, \, \prod_{j=1}^q F(P_j)
$$
where the sum is taken over all partitions of the set $\{1,\ldots,b\}$ into one or two elements subsets $P_j$ with the convention that $F(\{i\})=F_{ii}$ and $F(\{i,j\})=F_{ij}$. This remark will be helpful in the sequel.\\

This example generalizes to higher dimensions using the {\it hyperdeterminant} as introduced by Cayley \cite{Cay45} and later studied by Gelfand, Kapranov and Zelevinsky (see \cite{GKZ92} and the references therein).

More precisely, consider the Segre embedding of the product $X={\mathbb P}^{b-1}\times \ldots \times {\mathbb P}^{b-1}$ of $n$ complex projective spaces of dimension ${b-1}$ into the projective space ${\mathbb P}^{b^n-1}$. According to \cite[Theorem 1.3]{GKZ92}, the projective dual variety $X^\vee$ consisting of all hyperplanes in ${\mathbb P}^{b^n-1}$ tangent to $X$ at some point is a hypersurface in the dual projective space $\left({\mathbb P}^{b^n-1}\right)^\ast$. In particular, there exists a (unique up to sign) homogeneous polynomial with integer coefficients and irreducible over $\Z$ which defines $X^\vee$. This homogeneous polynomial is called {\it hyperdeterminant} and denoted by $\Det$. The hyperdeterminant is thus a homogeneous polynomial function on the space $({\mathbb C}^b)^\ast \otimes \ldots \otimes ({\mathbb C}^b)^\ast$ of $n$-multilinear maps over ${\mathbb C}^b$. If ${\mathbb C}^b$ is equipped by a basis ${\mathcal C}=(v_1,\ldots,v_b)$ an element $f \in ({\mathbb C}^b)^\ast \otimes \ldots \otimes ({\mathbb C}^b)^\ast$ is represented by a $n$-dimensional complex matrix $A_{\mathcal B}(f)=(a_{i_1,\ldots,i_n})$ where $a_{i_1,\ldots,i_n}=f(e_{i_1},\ldots,e_{i_n})$, and so $\Det(f)=\Det(A_{\mathcal B}(f))$ is a homogeneous polynomial function of matrix entries. An important property of the hyperdeterminant is its relative invariance under the action of the group $GL({\mathbb C}^b)\times \ldots \times GL({\mathbb C}^b)$, and thus its invariance by $SL({\mathbb C}^b)\times \ldots \times SL({\mathbb C}^b)$, according to \cite[Proposition 1.4]{GKZ92}. In particular $\Det A_{\mathcal{B}}(f)$ remains unchanged up to sign if we commute two elements $v_i$ and $v_j$, or if we replace $v_i$ by $v_i+v_j$ in the basis ${\mathcal C}$.

Now fix an element $F \in {\mathcal F}_n(V)$. Given a basis ${\mathcal B}:=(e_1,\ldots,e_b)$ of $V$, any $n$-multilinear form $F$ over $V$ is represented by an unique $n$-dimensional {\it real} matrix
$$
A_{\mathcal{B}}(F) =(a_{i_1,\ldots,i_n})_{(i_1,\ldots,i_n) \in \llbracket 1,b \rrbracket^n}
$$
where each coefficient is either $0$ or $1$ according to the equation $F(e_{i_1},\ldots,e_{i_n})=a_{i_1,\ldots,i_n}\mod \, 2$. As $\Det$ is a homogeneous polynomial function with integers coefficient, the expression $\Det A_{\mathcal{B}}(F)$ belongs to $\Z$.
Its invariance under the action of the group $SL({\mathbb C}^b)\times \ldots \times SL({\mathbb C}^b)$ implies that 
$$
\Det A_{\mathcal{B}}(F)=\Det A_{\mathcal{B}'}(F) \, \, \, \text{mod} \, \, \, 2
$$
if ${\mathcal B}'$ is obtained from ${\mathcal B}$ by commuting two elements $e_i$ and $e_j$ or by replacing $e_i$ by $e_i+e_j$. Because these transformations span the linear group $GL(V)$ we can set
$$
\Det_2 F:=\Det A_{\mathcal{B}}(F) \, \, \, \text{mod} \, \, \, 2
$$
which does not depend on the chosen basis ${\mathcal B}$. This reduction modulo $2$ of the hyperdeterminant $\Det_2 : {\mathcal F}_n(V) \to \Z_2$ is thus an algebraic invariant as obviously $\hat{A}_{\mathcal B}(F)=A_{\mathcal{B}}(F) \, \, \, \text{mod} \, \, \, 2$.\\

In dimension $n=2$, the hyperdeterminant coincides with the usual determinant so the algebraic invariant $\Det_2$ coincides with the one previously defined on ${\mathcal F}_2(V)$.\\

In dimensions $n=3$ and $b=2$, the hyperdeterminant has an explicit formula first discovered by Cayley \cite[p.89]{Cay45} and whose reduction modulo $2$ gives the following expression:
$$
\Det_2 F=F_{111}\cdot F_{222}+F_{122}\cdot F_{211}+F_{212}\cdot F_{121}+F_{221}\cdot F_{112}. 
$$
In particular, we deduce that $\Det_2 F=F_{111}\cdot F_{222}+F_{122}\cdot F_{211}$ in the case where $F$ is symmetric.\\

We now introduce the following notion.

\begin{definition}
An algebraic invariant $\kappa$ over ${\mathcal F}_n(V)$ is said {\it balanced} if 
\begin{itemize}[label=$\cdot$,itemsep=1ex]
\item there exist $q,m \in \N^\ast$ such that $qn=mb$,
\item the homogeneous polynomial function associated to $\kappa$ can be written as
\begin{equation}\label{eq:bal}
\kappa(F)=\sum_{I} \prod_{(i_1,\ldots,i_n) \in I} F_{i_1,\ldots,i_n},
\end{equation}
where each $I$ can be described as the subset of families of indices of length $n$
$$
I=\{(\pi(1),\ldots,\pi(n)),(\pi(n+1),\ldots,\pi(2n)),\ldots,(\pi((q-1)n+1),\ldots,\pi(qn))\}.
$$
for some map 
$$
\pi : \{1,\ldots,qn\} \to \{1,\ldots,b\}
$$
satisfying $|\pi^{-1}(i)|=m$ for $i=1,\ldots,b$.
\end{itemize}
\end{definition}

In other terms, we require the $I$s in the sum above to be obtained by taking $m$ times the indices $\{1,\ldots,b\}$ and rearranging them into $q$ families of length $n$.\\

For example, it is easy to check from its expression that $\Det_2$ is balanced in the case $n=2$, as well as in the case when $n=3$ and $b=2$. This fact indeed generalizes as follows.

\begin{proposition}
The algebraic invariant $\Det_2$ is balanced for any $n$ and $b$.
\end{proposition}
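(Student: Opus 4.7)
The plan is to exploit the transformation behavior of the hyperdeterminant $\Det$ under the natural diagonal torus action on the space of $n$-dimensional complex matrices of format $b\times\cdots\times b$. I expect that this action will force every monomial appearing in the polynomial $\Det$ to have the prescribed balanced index distribution, and the same property will then transfer to $\Det_2$ upon reduction modulo $2$.

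First I would recall from \cite[Proposition 1.4]{GKZ92} that $\Det$ is a relative invariant under $GL(\mathbb{C}^b)^n$, so that for every $(g_1,\ldots,g_n)$ one has
$$
\Det((g_1\otimes\cdots\otimes g_n)\cdot A)=\chi(g_1,\ldots,g_n)\,\Det(A)
$$
for some character $\chi=\prod_{k=1}^n(\det g_k)^{m_k}$. To identify the exponents $m_k$, I would plug in $g_k=\lambda_k\,\mathrm{Id}$ and use that the left-hand side is then scaled by $(\prod_k\lambda_k)^d$, where $d$ denotes the total degree of the homogeneous polynomial $\Det$. Comparing with $\prod_k\lambda_k^{b m_k}$ forces $m_k=d/b$ for every $k$, so in particular $b\mid d$ and $\chi=\prod_k(\det g_k)^{d/b}$.

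Next I would specialize to diagonal matrices $g_k=\mathrm{diag}(t_1^{(k)},\ldots,t_b^{(k)})$. Under this refined action each entry transforms as $a_{i_1,\ldots,i_n}\mapsto(\prod_k t_{i_k}^{(k)})\,a_{i_1,\ldots,i_n}$, hence any monomial $\prod_{(i_1,\ldots,i_n)\in I}a_{i_1,\ldots,i_n}$ indexed by a multiset $I$ of $q$ tuples is scaled by $\prod_k\prod_{i=1}^b(t_i^{(k)})^{N_{k,i}(I)}$, where $N_{k,i}(I)$ counts those tuples in $I$ whose $k$-th coordinate equals $i$. Because distinct monomials are linearly independent in the polynomial ring, the global character identity will match coefficient by coefficient and force $N_{k,i}(I)=d/b$ for every monomial $I$ appearing in $\Det$, every direction $k$, and every index $i\in\{1,\ldots,b\}$. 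Homogeneity also pins down $q=d$.

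Finally I would set $q:=d$ and $m:=nd/b$, which satisfies $qn=mb$. For any monomial $I$ of $\Det$ the total multiplicity of an index $i$ across all $qn$ coordinate positions equals $\sum_{k=1}^n N_{k,i}(I)=n\cdot(d/b)=m$, which is exactly the condition that $I$ be realizable as $\{(\pi(1),\ldots,\pi(n)),\ldots,(\pi((q-1)n+1),\ldots,\pi(qn))\}$ for some $\pi:\{1,\ldots,qn\}\to\{1,\ldots,b\}$ with $|\pi^{-1}(i)|=m$. Reducing modulo $2$ merely keeps a subset of the monomials occurring in $\Det$, so $\Det_2$ inherits the same structure and is balanced. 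The main technical point I expect to require care is the passage from the global invariance identity to the per-monomial statement, which is however a direct consequence of the linear independence of distinct monomials; verifying that $d/b$ is a genuine integer exponent is then built into the argument via the character being polynomial on $GL$.
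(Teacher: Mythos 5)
Your argument is correct, and it is genuinely different from (and more self-contained than) the one in the paper: there the proposition is disposed of in one line by invoking Cramlet's 1928 theorem that every algebraic relative invariant of a system of multilinear forms, being expressible through tensor algebra, automatically has this balanced monomial structure. You instead extract the balancing directly from the relative invariance of $\Det$ under $GL(\mathbb{C}^b)^{\times n}$ (\cite[Proposition 1.4]{GKZ92}, which the paper already uses to define $\Det_2$): the scalar specialization pins the character down to $\prod_k(\det g_k)^{d/b}$, the diagonal torus specialization then forces each surviving monomial $I$ to satisfy $N_{k,i}(I)=d/b$ for all $k,i$, and summing over $k$ gives total multiplicity $m=nd/b$ for each index $i$, with $q=d$ and $qn=mb$. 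This is a complete weight-space argument; the per-monomial step is indeed immediate from linear independence of monomials, and integrality of $d/b$ falls out of $N_{k,i}(I)$ being a count. In fact you prove slightly more than the definition requires (balance in each tensor slot separately, not just in aggregate), and the observation that reduction mod $2$ only deletes monomials correctly transfers the property to $\Det_2$. The only caveat worth flagging is cosmetic: the paper's definition writes each $I$ as a \emph{set} of $n$-tuples, whereas monomials of $\Det$ may involve repeated entries, so $I$ should be read as a multiset — but this looseness is already present in the paper's own formulation and does not affect your argument. The trade-off is clear: the paper's citation covers all relative invariants at once at the cost of opacity, while your proof is elementary, verifiable, and tailored to the hyperdeterminant.
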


\begin{proof}
This is a straightforward consequence of the main result \cite{Cra27} which proves, using tensorial algebra, that any algebraic relative invariant over the space of multilinear forms of a real vector space has this property.
\end{proof}
Note that we did not find examples of {\it unbalanced} algebraic invariants in the litterature, but whether or not they exist is possibly well known to experts.\\

\subsection{From a non-zero balanced algebraic invariant to a length product inequality} %

We now prove the main result of this section.

\begin{theorem}\label{manifolds.Z_2}
Consider a Riemannian manifold $M$ of first Betti number $b>0$ and of dimension $n$. Suppose that 
$$
\kappa(F_M)=1
$$
for some balanced algebraic invariant $\kappa$. 

Then there exists a $\Z_2$-homology basis induced by closed geodesics $\gamma_1,\ldots,\gamma_{b}$ whose length product satisfies the following inequality:
$$
\prod_{k=1}^{b} \ell(\gamma_k)\leq {n^b} \cdot \vol(M)^{b/ n}.
$$
\end{theorem}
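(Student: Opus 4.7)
The plan is to combine Theorem \ref{th:cup} with the combinatorial structure imposed by the balanced-invariant hypothesis. Fix a basis $\mathcal{B}=(\beta_1,\ldots,\beta_b)$ of $H^1(M;\Z_2)$ and let $(F_{i_1,\ldots,i_n})$ be the matrix of $F_M$ in $\mathcal{B}$. The identity $\kappa(F_M)=1$ expands, by the balance definition, as a $\Z_2$-sum over index sets $I$ of the monomials $\prod_{(i_1,\ldots,i_n)\in I}F_{i_1,\ldots,i_n}$. Since the sum equals $1$, at least one summand is $1$, which supplies an index family $I=\{(\pi(kn+1),\ldots,\pi(kn+n))\}_{k=0}^{q-1}$ in which every element of $\{1,\ldots,b\}$ appears $m$ times under $\pi$, and such that for every tuple $(i_1,\ldots,i_n)\in I$ the cup product $\beta_{i_1}\cup\cdots\cup\beta_{i_n}\in H^n(M;\Z_2)$ is non-zero.

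Now apply Theorem \ref{th:cup} to each of the $q$ tuples, obtaining for each $(i_1,\ldots,i_n)\in I$ the inequality $\prod_{j=1}^n L(\beta_{i_j})\le n^n\cdot\vol(M)$. Multiplying all $q$ of these inequalities together and using the balance property, so that each factor $L(\beta_i)$ appears with multiplicity exactly $m$ in the left-hand side, yields
$$
\prod_{i=1}^b L(\beta_i)^m\le\bigl(n^n\cdot\vol(M)\bigr)^q.
$$
Taking $m$-th roots and invoking the relation $qn=mb$ (equivalently $qn/m=b$ and $q/m=b/n$) gives
$$
\prod_{i=1}^b L(\beta_i)\le n^{\,b}\cdot\vol(M)^{b/n},
$$
which is the claimed numerical bound on the cohomological lengths.

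It remains to upgrade the $L(\beta_i)$ to actual lengths of closed geodesics $\gamma_i$ whose homology classes form a $\Z_2$-basis of $H_1(M;\Z_2)$. Each $L(\beta_i)$ is realized by a closed geodesic $\gamma_i$ by a standard compactness and shortening argument applied to the minimizing sequence of loops with $\beta_i[\gamma]\neq 0$. The delicate point---and, I expect, the main obstacle---is guaranteeing that the matrix $(\beta_i[\gamma_j])\in M_b(\Z_2)$ is invertible, so that the $\gamma_i$ are genuinely a basis of $H_1(M;\Z_2)$ rather than merely realizing non-trivial pairings with the diagonal entries. My plan is to not fix $\mathcal{B}$ at the outset, but instead to choose it adaptively: among all bases of $H^1(M;\Z_2)$ compatible with some non-vanishing monomial in the expansion of $\kappa(F_M)$, pick one minimizing $\prod_{i=1}^bL(\beta_i)$. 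If the matrix $(\beta_i[\gamma_j])$ fails to be invertible, a swap mechanism in the spirit of Gutt's symplectic analogue of Minkowski's first theorem (Lemma \ref{LemGut}, acknowledged in the introduction) allows one to exchange a linearly dependent $\gamma_j$ for a shorter representative of an independent homology class while modifying $\mathcal{B}$ compatibly, contradicting the minimality of the chosen basis. This forces the $\gamma_i$ to be independent, completing the proof.
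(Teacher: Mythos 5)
The first half of your argument---expanding $\kappa(F_M)=1$ in a basis, extracting a non-vanishing monomial, applying Theorem \ref{th:cup} to each of its $q$ tuples, multiplying, and using $qn=mb$ to get $\prod_{i=1}^b L(\beta_i)\le n^b\cdot\vol(M)^{b/n}$---is exactly the paper's computation and is correct. The gap is in the step you yourself flag as delicate, and it is a genuine one: for an arbitrary basis $\mathcal{B}$, the geodesics realizing the $L(\beta_i)$ need not be homologically independent (a single short curve $\gamma$ with $\beta_i[\gamma]=1$ for all $i$ could realize every $L(\beta_i)$ simultaneously), and your proposed repair is not a proof. The "swap mechanism in the spirit of Gutt's lemma" is unsubstantiated: Lemma \ref{LemGut} only permutes a fixed basis to pair up elements with non-zero symplectic pairing; it does not produce a shorter geodesic in an independent homology class, nor does it tell you how to modify $\mathcal{B}$ so that the product $\prod_i L(\beta_i)$ strictly decreases. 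Without an explicit descent step, the minimality of your adaptively chosen basis yields no contradiction.

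The paper avoids this difficulty entirely by reversing the order of the choices. It first picks closed geodesics $\gamma_1,\ldots,\gamma_b$ realizing the successive minima of length over $\Z_2$-homologically independent families; these form a homology basis by construction. It then takes $\mathcal{B}=(\alpha_1,\ldots,\alpha_b)$ to be the \emph{dual} basis, $\alpha_i[\gamma_j]=\delta_{ij}$, for which one checks that $L(\alpha_k)=\ell(\gamma_k)$ (any curve $\gamma$ with $\alpha_k[\gamma]\ne0$ and $\ell(\gamma)<\ell(\gamma_k)$ would, together with the appropriate shorter $\gamma_j$'s, violate the definition of a successive minimum). Since $\kappa$ is basis-independent, the monomial argument applies to this particular basis, and the bound on $\prod_k L(\alpha_k)$ is immediately a bound on $\prod_k\ell(\gamma_k)$ for an honest homology basis. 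If you restructure your proof this way, the remaining steps you wrote go through verbatim.
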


Theorem \ref{manifolds.Z_2.intro} is the direct application of this result with $\kappa=\Det_2$.

\begin{proof}
 Choose a sequence of $\Z_2$-homologically independent closed geodesics $\gamma_1,\ldots,\gamma_b$ in $M$ corresponding to the {\it successive minima} in the sense that
$$
\ell(\gamma_k)=\min \{ L \mid \text{there exist} \, \, k \, \, \text{closed curves} \, \, \Z_2\text{-homologically independent of length at most} \, \, L\}
$$
for $k=1,\ldots,b$. The corresponding family of homology classes $([\gamma_1],\ldots,[\gamma_b])$ is a basis of $H_1(M;\Z_2)$ and we denote by ${\mathcal B}:=(\alpha_1,\ldots,\alpha_b)$ the dual basis in $H^1(M;\Z_2)$ defined by $\alpha_i[\gamma_j]=\delta_{ij}$. By construction,
$$
L(\alpha_k)=\ell(\gamma_k)
$$
for $k=1,\ldots,b$.

Now consider a balanced homogeneous polynomial function associated to the algebraic invariant $\kappa$ as in formula (\ref{eq:bal}). As $\kappa(F_M)=1$, we have that
$$
\prod_{j=1}^{q} F_M(\alpha_{\pi((j-1)n+1)},\ldots,\alpha_{\pi(jn)})=1
$$
for some map 
$$
\pi : \{1,\ldots,qn\} \to \{1,\ldots,b\}
$$
satisfying $|\pi^{-1}(i)|=m$. In particular for all $j=1,\ldots,q$
$$
\alpha_{\pi((j-1)n+1)} \cup \ldots \cup \alpha_{\pi(jn)}[M]=F_M(\alpha_{\pi((j-1)n+1)},\ldots,\alpha_{\pi(jn)})=1
$$
which implies according to Theorem \ref{th:cup} that
$$
L(\alpha_{\pi((j-1)n+1)})\ldots L(\alpha_{\pi(jn)})\leq n^n \cdot \vol \, (M).
$$
We deduce that
$$
\left(\prod_{k=1}^b L(\alpha_k)\right)^m=\prod_{j=1}^q L(\alpha_{\pi((j-1)n+1)})\ldots L(\alpha_{\pi(jn)})\leq n^{qn} \cdot \vol \, (M)^q.
$$
As such
$$
\prod_{k=1}^b L(\alpha_k)\leq n^{qn/m} \cdot \vol \, (M)^{q/m}
$$
and we can conclude using the relations $qn=mb$ and $\ell(\gamma_k)=L(\alpha_k)$ for all $k=1,\ldots,b$.
\end{proof}

\smallskip

\subsection{Minimal length product over $\Z_2$-homology bases for surfaces} %

In the case where $M$ is an orientable Riemannian surface of genus $g$, the map $F_M$ is alternating. Indeed if $a \in H_1(M;\Z_2)$ is the Poincar\'e dual of a class $\alpha \in H^1(M;\Z_2)$, we have
$$
F_M(\alpha,\alpha)=\alpha \cup \alpha [M]=\alpha(a)=a\cap a=0.
$$
This implies that in any basis of $H^1(M;\Z_2)$ we can write
$$
\Det_2 F_M=\sum F_M(P_1)\ldots F_M(P_g)
$$
where the sum is taken over partitions of $\{1,\ldots,2g\}$ into two elements subsets. This is exactly the reduction of the Pfaffian modulo $2$ which is consistent with the fact that $x^2=x$ in characteristic $2$. It is well known that we can choose a basis $(a_1,b_1,\ldots,a_g,b_g)$ of $H_1(M;\Z_2)$ such that 
$$
a_i \cap b_i=1
$$
for all $i=1,\ldots,g$, the other intersection numbers between two elements of the basis being zero. If we denote by $(\alpha_1,\beta_1,\ldots,\alpha_g,\beta_g)$ the basis of $H^1(M;\Z_2)$ such that $\alpha_i$ (resp. $\beta_i$) is the Poincar\'e dual class of $a_i$ (resp. $b_i$), then we get that $\alpha_i\cup \beta_i [M]=a_i \cap b_i=1$ while the other cup products are zero. This implies that there exists only one partition for which the term $F_M(P_1)\ldots F_M(P_g)$ is non zero and thus that
$$
\Det_2 F_M=1. 
$$
This proves Corollary \ref{surfaces.Z_2.intro} when the surface is orientable.\\

In the case where $M$ is a non-orientable Riemannian surface we can argue similarly. First recall that in any basis of $H^1(M;\Z_2)$ we can write
$$
\Det_2 F_M=\sum_{P_1\cup \ldots \cup P_q=\{1,\ldots,b\} \, \,} \prod_{j=1}^q F_M(P_j)
$$
where the sum is taken over partitions of $\{1,\ldots,2g\}$ into one or two elements subsets. 

Suppose that the first Betti number is odd and set $b=2g+1$ (possibly $g$ can be $0$). We can always find a basis $(a_1,b_1,\ldots,a_g,b_g,c)$ of $H_1(M;\Z_2)$ such that $a_i \cap b_i = c\cap c=1$ for all $i=1,\ldots,g$ while the other intersection numbers are zero. 
If we denote by $(\alpha_1,\beta_1,\ldots,\alpha_g,\beta_g,\zeta)$ the basis of $H^1(M;\Z_2)$ obtained by considering the Poincar\'e dual classes of $(a_1,b_1,\ldots,a_g,b_g,c)$, we get that $\alpha_i\cup \beta_i [M]=\zeta \cup \zeta [M]=1$, while the other cup products are zero. It implies that there exists only one partition for which the term $F_M(P_1)\ldots F_M(P_q)$ is not zero and thus
$$
\Det_2 F_M=1.
$$
If the first Betti number is even, we can find a basis $(a_1,b_1,\ldots,a_g,b_g,c_1,c_2)$ of $H_1(M;\Z_2)$ such that $a_i \cap b_i = c_j\cap c_j=1$ and the other intersection numbers are zero. In a similar way we can check that $\Det_2 F_M=1$.

This proves the non-orientable case of Corollary \ref{surfaces.Z_2.intro}.

\subsection{Minimal length product over $\Z_2$-homology bases for $\R P^3 \# \R P^3$} %

For $M=\R P^3 \# \R P^3$ we have 
$$
\Det_2 F_M=(F_M)_{111}\cdot (F_M)_{222}+(F_M)_{122}\cdot (F_M)_{211}
$$
as $F_M$ is symmetric, and the dimension $n$ of the manifold and the dimension $b$ of its first homology group are respectively $n=3$ and $b=2$.

Now consider a decomposition of $M$ into two copies of $(\R P^3\setminus B^3)_i$ for $i=1,2$ glued along their boundaries via a diffeomorphism. According to the Mayer-Vietoris sequence $H_1(M;\Z_2)\simeq H_1(\R P^3;\Z_2)\oplus H_1(\R P^3;\Z_2)$ where the generator of the $i$-th component in this direct composition can be realized by a closed curve $a_i \subset (\R P^3\setminus B^3)_i$ for $i=1,2$. If we denote by ${\mathcal B}=(\alpha_1,\alpha_2)$ the basis of $H^1(M;\Z_2)$ formed by considering the Poincar\'e dual class of $(a_1,a_2)$ in $M$, a straightforward calculation shows that $(F_M)_{111}=(F_M)_{222}=1$, and the other terms are all zero. Thus
$$
\Det_2 F_M=1.
$$
This proves Corollary \ref{connectedsum.Z_2.intro}.\\


\subsection{Homological asymptotic volume and asymptotic number of closed geodesics}\label{sec:stab}


The Riemannian universal abelian covering space $\bar{M}$ of a Riemannian manifold $M$ is the normal covering space associated to the commutator subgroup $[\pi_1M,\pi_1M]$ of the fundamental group of $M$ endowed with the Riemannian pullback metric. 
If the first integral homology group of $M$ has free rank $d>0$, it is well known that the limit
$$
\Omega_{ab}(M):=\lim_{R \to \infty} \frac{\vol\, B(\bar{x},R)}{R^d}
$$
exists, is positive and does not depend on the choice of the base point $\bar{x} \in \bar{M}$. This Riemannian invariant appears in \cite{Bab92} under the name of {\it homological asymptotic volume} and is related to the stable norm as follows. First recall that the stable norm $\|\cdot \|$ is defined on $H_1(M,\R)$ by 
$$
\|h \|:= \inf \, \{\sum^{n}_{i=1}|r_i|\cdot \ell (\gamma_i) \}
$$
where the infimum is taken over all real Lipschitz $1$-cycles $\sum_{i=1}^n r_i \gamma_i$ realising a given class $h$. If we denote by ${\mathcal B}(M)$ the unit ball of the stable norm, then
\begin{equation}\label{eq:stab}
\Omega_{ab}(M)=\mu \left({\mathcal B}( M)\right) \cdot \vol(M)
\end{equation}
where $\mu$ denotes the Haar measure on $H_1(M,\R)\simeq \R^d$ for which the fundamental parallelepiped of the lattice $i \, H_1(M,\Z)$ has unit volume. Here $i : H_1(M,\Z) \to H_1(M,\R)$ denotes the homology mapping induced by the inclusion $i : \Z \hookrightarrow \R$ of coefficients.
The volume of the unit stable ball turns out to be related to length products through the following.

\begin{proposition}\label{stab}
Let $(\gamma_1,\ldots,\gamma_{d})$ be a family of closed geodesics inducing a basis of $H_1(M,\R)$. 
Then
$$
\mu ({\mathcal B}( M))\cdot \prod_{k=1}^{d} \ell(\gamma_k)\geq {2^d \over d!}.
$$
\end{proposition}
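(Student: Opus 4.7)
The plan is to exhibit an explicit convex body contained in $\mathcal{B}(M)$ whose $\mu$-volume is easy to compute, namely the cross-polytope spanned by the normalized classes of the $\gamma_k$, and then invoke containment.

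First I would recall the elementary upper bound $\|[\gamma]\| \leq \ell(\gamma)$ valid for any closed curve $\gamma$: the Lipschitz $1$-cycle $1\cdot\gamma$ realizes the class $[\gamma]$ and has mass $\ell(\gamma)$, so this is immediate from the definition of the stable norm. Applied to each geodesic $\gamma_k$, this yields that the $2d$ vectors $\pm [\gamma_k]/\ell(\gamma_k)$ all lie in $\mathcal{B}(M)$. Since $\mathcal{B}(M)$ is symmetric and convex, it contains their convex hull, which is a cross-polytope $C$ with vertices $\pm [\gamma_k]/\ell(\gamma_k)$.

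Next I would compute $\mu(C)$. If $\lambda$ denotes the Lebesgue measure on $H_1(M,\R)$ determined by any fixed $\Z$-basis $(e_1,\ldots,e_d)$ of the lattice $\Lambda = i\, H_1(M,\Z)$ (so that $\mu = \lambda$ under this normalization), then the standard formula for the volume of a generalized octahedron gives
$$
\mu(C) = \frac{2^d}{d!} \cdot \left|\det\!\left(\frac{[\gamma_1]}{\ell(\gamma_1)},\ldots,\frac{[\gamma_d]}{\ell(\gamma_d)}\right)\right| = \frac{2^d}{d! \prod_{k=1}^d \ell(\gamma_k)} \cdot \left|\det\bigl([\gamma_1],\ldots,[\gamma_d]\bigr)\right|,
$$
where the determinant is taken in the basis $(e_1,\ldots,e_d)$. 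Now each $[\gamma_k]$ lies in the lattice $\Lambda$ and writes as an integer combination of $(e_1,\ldots,e_d)$; since the $[\gamma_k]$ form an $\R$-basis of $H_1(M,\R)$, the corresponding integer matrix has nonzero determinant, hence absolute value at least $1$.

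Combining these two steps, $C \subset \mathcal{B}(M)$ gives
$$
\mu(\mathcal{B}(M)) \geq \mu(C) \geq \frac{2^d}{d! \prod_{k=1}^d \ell(\gamma_k)},
$$
which is exactly the claimed inequality. The only real subtlety, and hence the main thing to get right, is the normalization of the Haar measure $\mu$ against the lattice $i\,H_1(M,\Z)$ and the observation that integer classes forming an $\R$-basis have determinant of absolute value at least $1$ in any $\Z$-basis of the lattice; no hard analysis or systolic machinery is needed beyond the tautological bound $\|[\gamma]\| \leq \ell(\gamma)$.
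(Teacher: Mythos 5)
Your proof is correct and follows essentially the same route as the paper: both arguments reduce to the containment of the cross-polytope on the normalized classes $\pm[\gamma_k]$ inside $\mathcal{B}(M)$ via $\|[\gamma]\|\leq \ell(\gamma)$, together with the volume formula $2^d/d!\cdot|\det|$ and the observation that $\Z[\gamma_1]\oplus\cdots\oplus\Z[\gamma_d]$ is a sublattice of $i\,H_1(M,\Z)$, so the determinant is a nonzero integer. The only cosmetic difference is that the paper normalizes the vertices by the stable norms $\|[\gamma_k]\|$ and then compares with the lengths, while you normalize by $\ell(\gamma_k)$ directly.
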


This inequality still holds true for finite simplicial complexes endowed with Riemannian polyhedral metrics. In such a generality this inequality then turns out to be optimal, the equality being reached for the wedge product of $d$ circles (compare with \cite{BB06}).

\begin{proof}
First observe that $\text{Conv}\left\{\pm{[\gamma_1] / \|[\gamma_1]\|},\ldots,\pm{[\gamma_d] / \|[\gamma_d]\|}\right\} \subset {\mathcal B}(M)$ where Conv denotes the convex hull of a finite collection of points. Because for any closed geodesic $\gamma$ 
$$
\ell(\gamma)\geq \|[\gamma]\|,
$$
this implies that
\begin{eqnarray*}
\mu({\mathcal B}(M)) \cdot \prod_{k=1}^{d} \ell(\gamma_k) & \geq & \mu({\mathcal B}(M)) \cdot \prod_{k=1}^{d} \|[\gamma_k]\| \\
 & \geq &\mu (\text{Conv} \{\pm[\gamma_1],\ldots,\pm [\gamma_d]\})\\
 &\geq & {2^d \over d!}.
\end{eqnarray*}
The last inequality is a consequence of the observation that $\Z [\gamma_1] \oplus \ldots \oplus \Z [\gamma_d]$ is a sublattice of $i\left(H_1(M,\Z)\right)$.
\end{proof}

Using this proposition we can now bound from below the homological asymptotic volume as follows. 

\begin{theorem}\label{th.stab}
Consider a Riemannian manifold $M$ of first $\Z_2$-Betti number $b>0$ and of dimension $n$. Suppose that the integral homology has no $2$-torsion and that 
$$
\Det_2 F_M \neq 0.
$$
Then 
$$
\Omega_{ab}(M)\geq {\left({2 \over n}\right)^b} \cdot {\vol(M)^{1-b/n} \over b!}.
$$
\end{theorem}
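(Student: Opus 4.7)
The strategy is to chain together three ingredients already available: the identity $\Omega_{ab}(M)=\mu({\mathcal B}(M))\cdot \vol(M)$ from equation (\ref{eq:stab}), the lower bound on $\mu({\mathcal B}(M))$ provided by Proposition \ref{stab}, and the upper bound on the length product of a $\Z_2$-homology basis of closed geodesics coming from Theorem \ref{manifolds.Z_2.intro}. The only non-formal step is to verify that the hypothesis on $2$-torsion allows us to feed the geodesic family produced by the latter into the former.

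First I would apply Theorem \ref{manifolds.Z_2.intro}: since $\Det_2 F_M\neq 0$, there is a $\Z_2$-homology basis realized by closed geodesics $\gamma_1,\ldots,\gamma_b$ with
$$
\prod_{k=1}^b \ell(\gamma_k)\leq n^b \cdot \vol(M)^{b/n}.
$$
Next I would promote $(\gamma_1,\ldots,\gamma_b)$ to an $\R$-homology basis. The no-$2$-torsion hypothesis implies that the reduction map $H_1(M;\Z)/2\to H_1(M;\Z_2)$ is an isomorphism and that the free rank $d$ of $H_1(M;\Z)$ equals $b$. If one had a non-trivial relation $\sum c_k[\gamma_k]=0$ in $H_1(M;\Z)/\text{torsion}$ with $\gcd(c_k)=1$, reducing modulo $2$ would contradict the $\Z_2$-linear independence of the $[\gamma_k]$. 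Thus $([\gamma_1],\ldots,[\gamma_b])$ is $\Z$-linearly independent in $H_1(M;\Z)/\text{torsion}$, hence forms a $\Q$-basis, and therefore an $\R$-basis of $H_1(M;\R)$.

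Having secured this, Proposition \ref{stab} (applied with $d=b$) yields
$$
\mu({\mathcal B}(M)) \cdot \prod_{k=1}^b \ell(\gamma_k) \geq \frac{2^b}{b!}.
$$
Combining this with the length product bound and the identity $\Omega_{ab}(M)=\mu({\mathcal B}(M))\cdot \vol(M)$ gives
$$
\Omega_{ab}(M) \geq \frac{2^b/b!}{\prod_{k=1}^b \ell(\gamma_k)} \cdot \vol(M) \geq \frac{2^b/b!}{n^b \vol(M)^{b/n}} \cdot \vol(M) = \left(\frac{2}{n}\right)^b \cdot \frac{\vol(M)^{1-b/n}}{b!},
$$
which is the claimed inequality. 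The only genuinely delicate point in this plan is the $\Z_2$-to-$\R$ upgrade of the basis; once that is in hand the rest is a direct chaining of previously established estimates.
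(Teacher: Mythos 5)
Your proposal is correct and follows essentially the same route as the paper: apply Theorem \ref{manifolds.Z_2.intro}, use the no-$2$-torsion hypothesis (via the universal coefficient theorem) to promote the $\Z_2$-basis of geodesics to an $\R$-homology basis, and then chain Proposition \ref{stab} with equality (\ref{eq:stab}). Your mod-$2$ reduction argument for the independence upgrade is in fact slightly more explicit than the paper's, which simply asserts that the ranks coincide and that the family induces a basis of $H_1(M,\R)$.
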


\begin{proof}
Remember that for any abelian group $G$ 
$$
H_1(M,G) \simeq H_1(M,\Z) \otimes G
$$ 
according to the so-called ``universal coefficient Theorem" (see \cite[Theorem 3A.3 and Proposition 3A.5]{Hatcher}. This implies that if $H_1(M,\Z)$ has no $2$-torsion then the rank of $H_1(M,\Z_2)$ and $H_1(M,\R)$ coincide.

By applying Theorem \ref{manifolds.Z_2.intro} we find a $\Z_2$-homology basis of $M$ made of closed geodesics $(\gamma_1,\ldots,\gamma_b)$ such that
$$
\prod_{k=1}^{b} \ell(\gamma_k)\leq {n^b} \cdot \vol(M)^{b/n}.
$$
Because the family of closed geodesics $(\gamma_1,\ldots,\gamma_b)$ induces a basis of $H_1(M,\R)$, Proposition \ref{stab} implies that
$$
\mu({\mathcal B}( M)) \geq {\left({2 \over n}\right)^b} \cdot {\vol(M)^{-b/n} \over b!}
$$
and we can conclude using equality (\ref{eq:stab}).
\end{proof}

In particular we get the following lower bound for surfaces.

\begin{corollary}\label{stab.surfaces}
Consider an orientable Riemannian surface $S$ of genus $g$. 
Then
$$
\Omega_{ab}(S)\cdot \area(S)^{g-1} \geq {1 \over (2g)!}.
$$
\end{corollary}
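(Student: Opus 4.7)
The plan is to derive Corollary \ref{stab.surfaces} as a direct specialization of Theorem \ref{th.stab} to the two-dimensional setting. First I would verify that the hypotheses of Theorem \ref{th.stab} are satisfied for an orientable closed surface $S$ of genus $g\geq 1$. The integral homology $H_1(S,\Z)\simeq \Z^{2g}$ is free, and in particular has no $2$-torsion, which is what Theorem \ref{th.stab} requires. The non-vanishing $\Det_2 F_S\neq 0$ has already been established in the orientable part of the proof of Corollary \ref{surfaces.Z_2.intro}, using the existence of a symplectic basis of $H_1(S;\Z_2)$ adapted to the intersection form.

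Next I would plug the parameters of a surface into the conclusion of Theorem \ref{th.stab}. Since $n=\dim S =2$ and the first $\Z_2$-Betti number equals $b=2g$, the factor $(2/n)^{b}$ collapses to $1^{2g}=1$, the exponent of the volume becomes $1-b/n = 1-g$, and the factorial becomes $b!=(2g)!$. The inequality furnished by Theorem \ref{th.stab} therefore reads
\[
\Omega_{ab}(S) \;\geq\; \frac{\area(S)^{1-g}}{(2g)!}.
\]

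Finally, rearranging this inequality by multiplying both sides by $\area(S)^{g-1}$ yields exactly
\[
\Omega_{ab}(S)\cdot \area(S)^{g-1} \;\geq\; \frac{1}{(2g)!},
\]
which is the desired statement. There is no real obstacle here: the only substantive ingredients, namely the length-product bound of Theorem \ref{manifolds.Z_2.intro} and the stable-norm volume comparison of Proposition \ref{stab}, have already been absorbed into Theorem \ref{th.stab}; all that remains is the routine arithmetic verification above.
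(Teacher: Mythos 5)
Your proposal is correct and is exactly the paper's intended argument: the corollary is stated as a direct specialization of Theorem \ref{th.stab} with $n=2$ and $b=2g$, where $(2/n)^b=1$, the volume exponent is $1-g$, and the hypotheses (freeness of $H_1(S,\Z)$ and $\Det_2 F_S\neq 0$ via the symplectic basis) hold as you verify. Nothing further is needed.
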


This improves the previous known lower bound due to Babenko \cite[Corollary 5.5]{Bab92}.\\

To conclude this section let us denote by $N_M(t)$ the number of $\R$-homologically distinct closed geodesics of length $\leq t$. This number has polynomial growth of order the first $\R$-Betti number $b'$ which is captured by the asymptotic volume of the covering space corresponding to the natural map $\pi_1 M \to H_1(M;\Z)/{\mathrm{Tors}} \simeq i\left(H_1(M;\Z)\right)$. Because this asymptotic volume coincides with the homological asymptotic volume and because the stable norm is the unique norm enclosing the asymptotic geometry of this covering space (see \cite{Bur92} and \cite{CS16} for details), one obtains the equality
$$
\lim_{t \to \infty}{N_M(t) \over t^{b'}}=\mu({\mathcal B}(M)).
$$
From this we can deduce the following.

\begin{corollary}\label{nb.geodesics}
Consider a Riemannian manifold $M$ satisfying the hypothesis of Theorem \ref{th.stab}. Then 
$$
N_M(t)\geq {2^b \over \vol(M)^{b/n} \cdot n^b \cdot b!}\, t^b +o(t^{b-1}).
$$
\end{corollary}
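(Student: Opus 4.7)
The plan is that this corollary is essentially a direct combination of three ingredients already in place: Theorem \ref{th.stab} (which gives a lower bound on the homological asymptotic volume), the identity $\Omega_{ab}(M) = \mu({\mathcal B}(M)) \cdot \vol(M)$ from equation (\ref{eq:stab}), and the asymptotic equality
$$
\lim_{t \to \infty}{N_M(t) \over t^{b'}}=\mu({\mathcal B}(M))
$$
recalled just before the statement, where $b'$ denotes the first $\R$-Betti number of $M$. So my strategy is to chain these three facts together with no further geometric input.

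First I would observe that the hypothesis ``integral homology has no $2$-torsion'' (imported from Theorem \ref{th.stab}) forces $b = b'$. Indeed, by the universal coefficients theorem invoked in the proof of Theorem \ref{th.stab}, the absence of $2$-torsion in $H_1(M;\Z)$ implies that $\dim_{\Z_2} H_1(M;\Z_2) = \dim_\R H_1(M;\R)$, so the exponent $b$ appearing in the length product inequality and the exponent $b'$ governing the polynomial growth of $N_M$ agree.

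Next I would apply Theorem \ref{th.stab} directly. It gives
$$
\mu({\mathcal B}(M)) \cdot \vol(M) = \Omega_{ab}(M) \geq \left({2\over n}\right)^b \cdot {\vol(M)^{1-b/n} \over b!},
$$
which, after dividing through by $\vol(M)$, yields
$$
\mu({\mathcal B}(M)) \geq {2^b \over \vol(M)^{b/n} \cdot n^b \cdot b!}.
$$

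Finally I would insert this lower bound into the asymptotic equality $N_M(t)/t^{b} \to \mu({\mathcal B}(M))$ (using $b=b'$) and translate the limit into the desired inequality with the claimed error term. The sharper error $o(t^{b-1})$ comes from the fact that $N_M(t)$ counts lattice points of the torsion-free part of $H_1(M;\Z)$ inside $t\cdot {\mathcal B}(M)$, and a standard lattice-point count for a convex body gives a main term $\mu({\mathcal B}(M)) t^b$ with remainder of lower order; this refinement is exactly what the references \cite{Bur92, CS16} provide via the stable-norm description of the asymptotic geometry of the abelian cover. There is no real obstacle here: the only thing one has to be slightly careful about is making sure one is justified in replacing $b'$ by $b$ throughout, which is exactly what the no-$2$-torsion hypothesis buys us.
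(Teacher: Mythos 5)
Your proposal is correct and follows exactly the paper's (essentially one-line) deduction: Theorem \ref{th.stab} combined with the identity $\Omega_{ab}(M)=\mu({\mathcal B}(M))\cdot\vol(M)$ bounds $\mu({\mathcal B}(M))$ from below, the no-$2$-torsion hypothesis identifies $b$ with $b'$, and the limit $N_M(t)/t^{b'}\to\mu({\mathcal B}(M))$ converts this into the stated count. If anything, you justify the $o(t^{b-1})$ error term (via the lattice-point count behind \cite{Bur92,CS16} and Babenko's asymptotics) more explicitly than the paper does.
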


A similar lower bound can be deduced for the number of $\R$-homologically distinct and non-multiple closed geodesics of length $\leq t$, compare with \cite[Theorem 7.1 and Corollary 7.2]{Bab92}.


\section{Minimal length product of closed geodesics for surfaces} %


In this section, after a discussion about graphs, we focus on surfaces.

\subsection{Minimal length product over $\Z_2$-homology bases for graphs} %

In this subsection we show that a Minkowski's second principle also holds for graphs. This result is a straightforward consequence of Bollob\'as-Szemer\'edi-Thomason's systolic inequality and paves the way for our conjecture on length product inequalities for surfaces. \\

Consider a finite and connected $1$-dimensional simplicial complex $\Gamma$ endowed with a metric. Here by metric we mean a function which associates to every edge $e$ a positive real value $\ell(e)$ encoding its length. In short, we will call such a pair $(\Gamma,\ell)$ a {\it connected metric graph}. We denote by $E(\Gamma)$ its set of edges and $V(\Gamma)$ its set of vertices. We introduce several quantities. 
First its total length
$$
\ell(\Gamma)=\sum_{e \in E(\Gamma)} \ell(e),
$$
which can be interpreted as its $1$-dimensional volume. 
Secondly, we associate to any path $P=(e_1,\ldots,e_n)$ made of consecutive edges $e_1,\ldots,e_n \in E(\Gamma)$ its length $\ell(P)$ defined by 
$$
\ell(P)=\sum_{i=1}^n \ell(e_i).
$$
A cycle will mean a closed embedded path. The first Betti number of the graph (sometimes called its {\it cyclic number}) can be computed using the formula
$$
b_1(\Gamma)=|E(\Gamma)|-|V(\Gamma)|+1,
$$ 
where $|\cdot |$ denotes cardinality of finite sets.

Minkowski's second principle for graphs takes the following form.

\begin{theorem}\label{prod.graph}
Consider a connected metric graph $(\Gamma,\ell)$ with first Betti number $b\geq 2$.

Then there exists a $\Z_2$-homology basis induced by cycles $\gamma_1,\ldots,\gamma_b$ whose length product satisfies the following inequality:
\begin{equation}\label{eq:prodgraphs}
\prod_{k=1}^{b} \ell(\gamma_k)\leq {4^{b-1}\over (b-1)!} \cdot \prod_{k=2}^{b} \log_2 k \cdot \ell(\Gamma)^b.
\end{equation}
In particular
$$
\left(\prod_{k=1}^{b} \ell(\gamma_k)\right)^{1/b}\leq {4 \over \sqrt[b]{b!}} \cdot \log_2 b \cdot \ell(\Gamma)\underset{b \to \infty}{\simeq} 4e \cdot \log_2 b \cdot {\ell(\Gamma) \over b}.
$$
\end{theorem}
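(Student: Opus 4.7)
The plan is to iterate the metric Bollob\'as--Szemer\'edi--Thomason systolic inequality (established in the form we need in \cite{BT97,BS02}), which asserts that any connected metric graph $(\Gamma,\ell)$ with first Betti number $b\geq 2$ contains a cycle of length at most $4\log_2(b)\cdot \ell(\Gamma)/(b-1)$. Starting from $\Gamma_0:=\Gamma$ with $b_1(\Gamma_0)=b$, I would repeat the following peeling step for $j=1,\ldots,b-1$. At step $j$, the current graph $\Gamma_{j-1}$ has first Betti number $b-j+1\geq 2$; apply the BST inequality to extract a cycle $\gamma_j\subset\Gamma_{j-1}$ with
\begin{equation*}
\ell(\gamma_j)\leq \frac{4\log_2(b-j+1)}{b-j}\cdot \ell(\Gamma_{j-1})\leq \frac{4\log_2(b-j+1)}{b-j}\cdot \ell(\Gamma).
\end{equation*}
Pick any edge $e_j$ of $\gamma_j$ and set $\Gamma_j:=\Gamma_{j-1}\setminus e_j$. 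Since $e_j$ lies on a cycle, $\Gamma_j$ remains connected; moreover $b_1(\Gamma_j)=b-j$ and $\ell(\Gamma_j)<\ell(\Gamma_{j-1})$. After $b-1$ such peelings, the graph $\Gamma_{b-1}$ has first Betti number $1$, and one extracts any remaining cycle $\gamma_b\subset\Gamma_{b-1}$ of length at most $\ell(\Gamma)$.

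Multiplying all the bounds and performing the substitution $k=b-j+1$ gives
\begin{equation*}
\prod_{k=1}^b \ell(\gamma_k)\leq \ell(\Gamma)^b \cdot \prod_{j=1}^{b-1}\frac{4\log_2(b-j+1)}{b-j}= \frac{4^{b-1}\prod_{k=2}^b\log_2(k)}{(b-1)!}\cdot \ell(\Gamma)^b,
\end{equation*}
which is precisely (\ref{eq:prodgraphs}). The asymptotic form stated in the second half of the theorem is then an immediate consequence of Stirling's equivalent $((b-1)!)^{1/b}\sim b/e$ together with the elementary fact that the geometric mean $\bigl(\prod_{k=2}^b \log_2(k)\bigr)^{1/b}$ is asymptotic to $\log_2 b$ as $b\to\infty$.

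The only structural verification required is that $\gamma_1,\ldots,\gamma_b$ induces a $\Z_2$-homology basis of $\Gamma$. For each $j$, the graph $\Gamma_{j-1}$ is recovered from $\Gamma_j$ by reattaching the single open edge $e_j$ whose endpoints lie in the same component of $\Gamma_j$; hence the inclusion induces an injection $i_\ast\colon H_1(\Gamma_j;\Z_2)\hookrightarrow H_1(\Gamma_{j-1};\Z_2)$ of codimension one, and the cycle $\gamma_j$, which necessarily traverses $e_j$, projects to a generator of the quotient. Iterating, $[\gamma_1],\ldots,[\gamma_b]$ form a basis of $H_1(\Gamma;\Z_2)$. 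I do not anticipate a serious obstacle: the only delicate point is the indexing bookkeeping that produces $(b-1)!$ rather than $b!$ in the denominator, via the single ``free'' cycle $\gamma_b$ extracted at the bottom of the iteration when BST is no longer applicable.
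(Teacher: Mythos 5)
Your proposal is correct and is essentially the paper's proof: the paper runs the same peeling argument as an induction on $b$ (apply Bollob\'as--Szemer\'edi--Thomason, delete an edge of the extracted cycle to drop the Betti number by one while keeping the graph connected and not increasing its total length, and take the unique remaining cycle at the bottom), whereas you unroll the induction into an explicit iteration; the bookkeeping $\prod_{k=2}^{b}\frac{4\log_2 k}{k-1}=\frac{4^{b-1}}{(b-1)!}\prod_{k=2}^{b}\log_2 k$ and the homology-basis verification both check out. No gaps.
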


\begin{proof}
Bollob\'as-Szemer\'edi-Thomason's systolic inequality for metric graphs (see~\cite{BT97,BS02}), says that there exists a non-trivial cycle $\gamma_1 \in H_1(\Gamma;\Z_2)$ such that 
$$
\ell(\gamma_1)\leq 4 \, \frac{\log_2 b}{b-1} \cdot \ell(\Gamma).
$$
Now by deleting one edge of $\gamma_1$ we get a connected subgraph $\Gamma_1$ with $b_1(\Gamma_1)=b-1$ and $\ell(\Gamma_1)\leq \ell(\Gamma)$. 

If $b=2$, we choose the only remaining minimal cycle $\gamma_2 \subset \Gamma_1$ whose length satisfies 
$$
\ell(\gamma_2)\leq \ell(\Gamma_1)\leq \ell(\Gamma).
$$
Then we are done as the cycles $\gamma_1$ and $\gamma_2$ induce a $\Z_2$-homology basis by construction, and their length product satisfies the desired upper bound.

If $b>2$ we argue by induction: because $\Gamma_1$ together with the metric induced by $\ell$ is also a connected metric graph, there exists a family of cycles $(\gamma_2,\ldots,\gamma_b)$ which form a homology basis of $H_1(\Gamma_1;\Z_2)$ and whose length product satisfies
$$
\prod_{k=2}^{b} \ell(\gamma_k)\leq {4^{b-2}\over (b-2)!} \cdot \prod_{k=2}^{b-1} \log_2 k \cdot \ell(\Gamma_1)^{b-1}.
$$
In turn, the cycles $\gamma_1,\ldots,\gamma_b$ form a $\Z_2$-homology basis of $\Gamma$ by construction. Furthermore their length product satisfies the desired bound.
\end{proof}

This inequality cannot be substantially improved as one can construct sequences of $p$-regular graphs whose girth---defined as the shortest length of a non-trivial cycle---grows logarithmically in the number of vertices. These examples are metric graphs with all edges of length $1$. The best known constructions have asymptotic girth
$$
{4 \over 3} \log_{p-1} n
$$
as the number of vertices $n$ goes to infinity (see \cite{Wei84} and \cite{LPS88}). The $3$-regular case of these examples provides a sequence of graphs whose first Betti number $b \to \infty$ and for which every homology basis $(\gamma_1,\ldots,\gamma_b)$ satisfies the asymptotic lower bound
$$
\left(\prod_{k=1}^{b} \ell(\gamma_k)\right)^{1/b}\gtrsim {4 \over 9} \cdot \log_2 b \cdot {\ell(\Gamma) \over b}.
$$

By analogy we conjecture the following for surfaces.

\begin{conjecture}\label{conj.surfaces.Z}
There exists a positive constant $c>0$ such that the following holds. Consider a Riemannian surface $S$ of first $\Z_2$-Betti number $b>0$. Then there exists a $\Z_2$-homology basis induced by closed geodesics $\gamma_1,\ldots,\gamma_{b}$ and whose length product satisfies the following inequality:
$$
\prod_{i=1}^{b} \ell(\gamma_i)\leq (c \, \log b)^{b} \cdot \left({\area(S)\over b}\right)^{b/2}.
$$
\end{conjecture}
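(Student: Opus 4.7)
The approach I propose is an induction on the first $\Z_2$-Betti number $b$, directly analogous to the inductive proof of Theorem \ref{prod.graph} for graphs, with Gromov's logarithmic systolic inequality for surfaces playing the role of the Bollob\'as-Szemer\'edi-Thomason inequality. For the base case $b=1$, the classical $\Z_2$-homological systolic inequality of Gromov-Hebda produces a single closed geodesic of length at most $C\sqrt{\area(S)}$, which suffices.

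For the inductive step with $b \geq 2$, the key input is Gromov's logarithmic systolic inequality (sharp up to constants by the Buser-Sarnak examples): any such surface admits a closed geodesic $\gamma_1$ with non-trivial $\Z_2$-homology satisfying
$$
\ell(\gamma_1) \leq c \, \log b \, \sqrt{\area(S)/b}.
$$
A shortest representative of a non-trivial free homotopy class on a surface is automatically simple, and a simple closed curve with $[\gamma_1]\neq 0 \in H_1(S;\Z_2)$ is non-separating, so cutting $S$ along $\gamma_1$ yields a connected surface $S_1$ with $\area(S_1)=\area(S)$ and $b_1(S_1;\Z_2)=b-1$. Applying the inductive hypothesis on $S_1$ produces closed geodesics $\gamma_2,\ldots,\gamma_b$ with length product bounded by $(c\log(b-1))^{b-1}(\area(S)/(b-1))^{(b-1)/2}$. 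Multiplying these bounds and using Stirling's formula $b!\sim (b/e)^b\sqrt{2\pi b}$ together with the asymptotic $\prod_{j=2}^b \log j\sim (\log b)^b$ (which follows from $\sum_{j=2}^b \log\log j \sim b\log\log b$) recovers the conjectured bound $(c'\log b)^b (\area(S)/b)^{b/2}$.

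The principal obstacle is that Gromov's logarithmic systolic inequality is classically stated for closed surfaces, whereas the very first cut introduces a surface with boundary. One natural remedy is to double $S_1$ across its boundary components and apply the closed-surface inequality to the resulting double, then descend an equivariant short geodesic back to $S_1$; this requires tracking carefully how doubling affects both area and Betti number so that constants do not deteriorate through the iteration. An alternative would be to establish directly a logarithmic systolic inequality for surfaces with boundary indexed by the first $\Z_2$-Betti number. A secondary issue is the passage from a $\Z_2$-homology basis of $S_1$ to a basis of $S$: the induction produces curves geodesic in $S_1$, which must be replaced by their free-homotopy length-minimisers in $S$ (this only shortens lengths and preserves $\Z_2$-homology classes), and a Mayer-Vietoris analysis of the cut-and-paste decomposition is needed to verify that their $\Z_2$-homology classes together with $[\gamma_1]$ form a basis of $H_1(S;\Z_2)$. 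The heart of the matter is thus the adaptation of Gromov's logarithmic inequality to surfaces with boundary while retaining the sharp $(\log b)/\sqrt{b}$ dependence.
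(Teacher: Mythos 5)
This statement is Conjecture~\ref{conj.surfaces.Z} (equivalently Conjecture~\ref{conj.surfaces.Z.intro}): the paper does not prove it and explicitly leaves it open, establishing only partial results --- the weaker bound $C^g\area(S)^g$ of Theorem~\ref{prod.Z.surfaces}, the conjectured bound under a lower bound on the homological systole (Proposition~\ref{prod.surfaces.sys.big}), and the bound for only $g$ of the $2g$ curves (Proposition~\ref{surfaces.gcurves}). So your proposal should be judged as an attempt at an open problem, and it contains a fatal structural gap independent of the technical obstacle you flag. In the orientable case the $\Z_2$-intersection form on $H_1(S;\Z_2)$ is non-degenerate and alternating, so the annihilator $[\gamma_1]^{\perp}$ of $[\gamma_1]$ is a hyperplane of dimension $b-1$ that \emph{contains} $[\gamma_1]$. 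Every curve produced by your induction lives in $S\setminus\gamma_1$, hence is disjoint from $\gamma_1$ and has zero $\Z_2$-intersection with it; thus $[\gamma_1],[\gamma_2],\ldots,[\gamma_b]$ all lie in $[\gamma_1]^{\perp}$ and can never span $H_1(S;\Z_2)$. This is precisely where the analogy with Theorem~\ref{prod.graph} breaks down: deleting an edge of a cycle $\gamma_1$ in a graph removes $[\gamma_1]$ from the homology of the subgraph, whereas cutting a surface along $\gamma_1$ does not --- the boundary circles of the cut surface still represent $[\gamma_1]$, and the class genuinely lost is the dual one, represented by curves crossing $\gamma_1$. The paper's Theorem~\ref{prod.Z.surfaces} handles this by extracting a \emph{dual pair} $\gamma,\delta$ with $[\gamma]\cap[\delta]=1$ before cutting, using Lemma~\ref{lemma.inter.systole} to bound $\ell(\gamma)\cdot\ell(\delta)\leq 2\area(S)$; but that bound is a factor of order $b/(\log b)^2$ weaker than the $\left(c\log b\right)^2\area(S)/b$ the conjecture would require for each such pair, and closing that gap is exactly the open problem (compare Conjecture~\ref{conj.inter.surfaces}).

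Two further points. First, the obstacle you call ``principal'' --- a logarithmic systolic inequality for bordered surfaces obtained by doubling --- is not a routine remedy: the systole of the double need not project to a curve of $S_1$, and when it does its class may be trivial or boundary-parallel in $S_1$; moreover the area and Betti number of the double degrade through the iteration. But even granting such an inequality, the induction still fails for the homological reason above. Second, the claim that a shortest representative of a non-trivial free homotopy class is automatically simple is false for general Riemannian metrics; what is true (and what you need) is that a shortest $\Z_2$-homologically non-trivial closed geodesic is simple, by the surgery argument at a self-intersection. If you want a provable statement along your lines, Proposition~\ref{prod.surfaces.sys.big} and Proposition~\ref{surfaces.gcurves} delimit what is currently within reach.
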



\subsection{Minimal length product over $\Z$-homology bases for orientable surfaces}%

Observe that Theorem \ref{prod.graph} and its proof are still valid for $\Z$-coefficients.\\

On an orientable surface $S$ we could try to use the intersection form on $H_1(S,\Z)$ to deduce an analog of Corollary \ref{surfaces.Z.intro} for $\Z$-coefficients. Indeed, the closed geodesics composing a minimal $\Z_2$-homology basis are {\it straight}, by which we mean that between any two points on the curve the distance equals the length of the shortest segment of the curve joining them. This implies that any two closed geodesics in a minimal $\Z_2$-homology basis (for successive lengths) have pairwise {\it geometric} intersection $0$ or $1$. Combining this with the fact that they are linearly independent in $H_1(S,\Z)$, one could hope to deduce that they indeed form a $\Z$-homology basis but at the moment, it is unclear to us how to make this strategy work.\\

Nonetheless, we prove using a different strategy the following Minkowski's second principle for orientable Riemannian surfaces and in any event, this different approach will also be useful in the sequel. 
\begin{theorem}\label{prod.Z.surfaces}
Consider an orientable Riemannian surface $S$ of genus $g$.

Then there exists a $\Z$-homology basis induced by closed geodesics $\gamma_1,\ldots,\gamma_{2g}$ whose length product satisfies the following inequality:
\begin{equation*}
\prod_{i=k}^{2g} \ell(\gamma_k)\leq {C^g} \cdot \area(S)^g
\end{equation*}
for some constant $C<2^{31}$.
\end{theorem}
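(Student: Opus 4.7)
\noindent\emph{Proof plan.}
The plan is to construct, by induction on the genus $g$, a \emph{symplectic} family of simple closed curves on $S$: pairs $(\gamma_i, \delta_i)$ for $i = 1, \ldots, g$ such that $\gamma_i \cdot \delta_i = 1$ and all other pairwise geometric intersection numbers vanish. Such a family is automatically a $\Z$-homology basis of $S$, so it suffices to bound its length product. Replacing each constructed curve by its geodesic representative in its free homotopy class only decreases lengths, so the target bound carries through to a family of closed geodesics.

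For the base case $g = 1$, the result reduces to Minkowski's second theorem applied to the lattice $H_1(T^2,\Z)$ equipped with the stable norm, whose unit ball has Haar volume comparable to $1/\area(T^2)$ by a systolic estimate on the torus. For the inductive step, I would first extract a short non-separating simple closed geodesic $\gamma_1$ on $S$ with $\ell(\gamma_1) \leq C_1 \sqrt{\area(S)}$, by refining the argument underlying Corollary \ref{surfaces.Z_2.intro}: the cup-product inequality of Theorem \ref{th:cup} applied to a single class produces a $\Z_2$-homologically non-trivial curve, and a standard surgery along innermost self-intersections together with discarding separating components yields a simple, non-separating representative of no greater length. Cutting $S$ along $\gamma_1$ gives a connected genus-$(g-1)$ surface $S'$ with two boundary circles of length $\ell(\gamma_1)$ and total area $\area(S)$.

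Applying a coarea argument to the distance function from one of the two boundary components now yields a short geodesic arc $\eta_1 \subset S'$ joining the two components with $\ell(\eta_1) \leq C_2 \sqrt{\area(S)}$; concatenating $\eta_1$ with a subarc of $\gamma_1$ produces a simple loop $\delta_1 \subset S$ satisfying $\gamma_1 \cdot \delta_1 = 1$ and $\ell(\delta_1) \leq C_3 \sqrt{\area(S)}$. Cutting $S'$ along $\eta_1$ and capping off the single resulting boundary component with a disk of negligible area produces a closed surface $\widehat S$ of genus $g-1$ and area at most $\area(S) + \varepsilon$. The inductive hypothesis applied to $\widehat S$ yields a symplectic family $(\gamma_2, \delta_2, \ldots, \gamma_g, \delta_g)$ whose length product is bounded by $C^{g-1}\,(\area(S) + \varepsilon)^{g-1}$; after a small perturbation these curves avoid the capping disk and descend to $S$, with any residual algebraic intersection with $\delta_1$ corrected to zero by Dehn twists along $\gamma_1$ (at a cost that is controlled because the twisted curves remain of length at most a bounded multiple of the original lengths plus $\ell(\gamma_1)$, which itself is $O(\sqrt{\area(S)})$).

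The main obstacle is the tight propagation of constants through the induction. Each step contributes a factor $C_1 C_3 \cdot \area(S)$ to the length product, so after $g$ iterations the accumulated constant is essentially $(C_1 C_3)^g$; keeping this below $C^g$ with $C < 2^{31}$ requires sharp explicit forms of both the non-separating systolic inequality (via a refined Guth-type filling argument) and the coarea estimate for connecting arcs, together with careful accounting of the Dehn twist corrections and the $\varepsilon$-capping. The numerical value $2^{31}$ is generous enough to absorb the small losses incurred at each of these steps, but the constant is presumably far from optimal.
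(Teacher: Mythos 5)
The central quantitative step of your induction is broken. You claim that a coarea argument applied to the distance function from one boundary component of $S'$ produces a connecting arc $\eta_1$ with $\ell(\eta_1)\le C_2\sqrt{\area(S)}$, hence a dual curve $\delta_1$ with $\ell(\delta_1)\le C_3\sqrt{\area(S)}$. This is false: the level sets of the distance-from-boundary function admit no a priori lower bound on their length, so the coarea formula yields no upper bound on the distance between the two boundary circles in terms of the area. Concretely, on a hyperbolic surface whose homological systole $\gamma_1$ is very short, the collar lemma forces any curve crossing $\gamma_1$ to have length at least $2\,\mathrm{arcsinh}\left(1/\sinh(\ell(\gamma_1)/2)\right)\to\infty$, while $\area(S)=4\pi(g-1)$ stays fixed. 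What is true --- and what the paper isolates as Lemma \ref{lemma.inter.systole} --- is the \emph{product} bound $\ell(\gamma)\cdot L(\gamma)\le 2\area(S)$ when $\gamma$ realizes the homological systole; its proof uses balls $D(p,R)$ centered at a point of $\gamma$ and the homological non-triviality of $\gamma\cap D(p,R)$ rel boundary to force $\ell(\partial D(p,R))\ge 4R$. Your induction should carry the product $\ell(\gamma_1)\,\ell(\delta_1)\le 2\area(S)$ through each step rather than trying to bound each factor separately by $O(\sqrt{\area(S)})$.

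There is a second gap in the capping step. Gluing a ``disk of negligible area'' onto a boundary circle of length comparable to $\ell(\gamma_1)+\ell(\eta_1)$ forces the cap to be a long thin region; the curves supplied by the inductive hypothesis on $\widehat S$ may cross it essentially, and rerouting such a crossing along the cap boundary costs up to half the boundary length per crossing, which is not a ``small perturbation'' and is not controlled. This is precisely why the paper caps with round hemispheres (off which curves can be homotoped with no length increase), accepts the resulting area increase $\ell(\gamma)^2/\pi$, and is therefore compelled to split into cases according to whether $\sys(S)\le\sqrt{\pi\area(S)/(g-1)}$: only in the small-systole regime is the compounded area growth bounded by $(g/(g-1))^{g-1}\le e$ per step. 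In the complementary large-systole regime the paper abandons the handle-by-handle induction altogether and invokes Theorem 1.1 of \cite{BPS12}, which produces all $2g$ generators at once with lengths of order $2^{16}\, g\,\frac{\log(2g-k+2)}{2g-k+1}\sqrt{\area(S)/(4\pi(g-1))}$; your proposal contains no substitute for this input, and without it (or the product lemma above) the per-step losses cannot be kept of the form $C^g$. By contrast, the Dehn-twist correction you describe is unnecessary: once the inductive curves avoid the cap they are disjoint from $\gamma_1\cup\delta_1$, and the symplectic splitting of $H_1(S;\Z)$ already yields the basis property.
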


\begin{remark}\label{rem:prodtorus}
This result for genus $g=1$ is already well-known. More precisely, in \cite{CK03} the following optimal inequality was proved: on any Riemannian 2-torus of unit area, we can always find two closed geodesics $\gamma_1,\gamma_2$ which generate the fundamental group and whose length product satisfies
$$
\ell(\gamma_1) \cdot \ell(\gamma_2) \leq {2 \over \sqrt{3}}.
$$
Furthermore there is equality if and only if the torus is isometric to a flat hexagonal torus (the torus obtained by pairing the opposite sides of a flat regular hexagon).
\end{remark}

\begin{proof}
The homological systole is defined by 
$$
\sys(S):=\min \{\ell(\gamma) \mid [\gamma]\neq 0 \in H_1(S;\Z)\}
$$
where the minimum is taken over loops whose homological class is non trivial. A closed geodesic $\gamma$ is said to realize the homological systole if $[\gamma]\neq 0$ and
$$
\ell(\gamma)=\sys(S).
$$
Let $\cdot \cap \cdot$ denote the intersection form on $H_1(S;\Z)$ (this notation will be used throughout the sequel). If $\gamma$ is a closed geodesic with $[\gamma]\neq0$ we can now define
$$
L(\gamma):=\min \{\ell(\delta) \mid [\gamma]\cap [\delta]\neq 0\}.
$$
Observe that $L(\gamma)$ depends only on the homological class induced by $\gamma$ and is realized by the length of some closed geodesic $\delta$ with non zero intersection with $\gamma$. Furthermore if $\gamma$ is the homological systole, then it is easy to see by a standard surgery argument that in this case $[\gamma]\cap [\delta]=1$.

\begin{lemma}\label{lemma.inter.systole}
If a closed geodesic $\gamma$ realizes the homological systole, then
$$
\ell(\gamma) \cdot L(\gamma) \leq 2 \area(S).
$$
\end{lemma}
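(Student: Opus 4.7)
My plan is as follows. First, since $\gamma$ realizes the homological systole, it is simple (any self-intersection would, by a standard surgery, produce two shorter loops whose classes sum to $[\gamma]\neq 0$, one of which must still be nonzero in $H_1(S;\Z)$, contradicting minimality) and non-separating (any separating simple closed curve bounds, hence is null-homologous). Cutting $S$ along $\gamma$ therefore produces a connected surface $S^\circ$ of the same area as $S$, with two boundary circles $\gamma^+$ and $\gamma^-$, each isometric to $\gamma$. Set $T:=d_{S^\circ}(\gamma^+,\gamma^-)$.

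The heart of the proof is a coarea estimate applied to the $1$-Lipschitz function $f(x):=d_{S^\circ}(x,\gamma^+\cup\gamma^-)$. For $t\in(0,T/2)$ the $t$-neighborhoods of $\gamma^+$ and $\gamma^-$ in $S^\circ$ must be disjoint (otherwise concatenating two short radial segments yields a path from $\gamma^+$ to $\gamma^-$ of length less than $T$), so the level set $f^{-1}(t)$ decomposes as $\Sigma_t^+\sqcup \Sigma_t^-$, one piece on each side. I would then argue that the closed region $V_t^+:=\{x\in S^\circ:d_{S^\circ}(x,\gamma^+)\leq t\}\cup\gamma^+$ descends under the regluing $S^\circ\to S$ to a $2$-chain in $S$ whose boundary is $\gamma\pm\Sigma_t^+$, so $[\Sigma_t^+]=\pm[\gamma]\neq 0$ in $H_1(S;\Z)$; likewise for $\Sigma_t^-$. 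Since $\gamma$ achieves the homological systole, each of $\Sigma_t^\pm$ has length at least $\ell(\gamma)$, and thus $\length(f^{-1}(t))\geq 2\ell(\gamma)$ for a.e.\ $t\in(0,T/2)$. The coarea inequality then yields
$$
\area(S)=\area(S^\circ)\;\geq\;\int_0^{T/2}\length(f^{-1}(t))\,dt\;\geq\;\ell(\gamma)\cdot T.
$$

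To finish I would bound $T$ from below by $L(\gamma)/2$. A minimizing path of length (arbitrarily close to) $T$ in $S^\circ$ joining a point of $\gamma^+$ to a point of $\gamma^-$ projects in $S$ to an arc with both endpoints on $\gamma$; closing it up by the shorter arc of $\gamma$ between these two endpoints yields a closed curve of length at most $T+\ell(\gamma)/2$ meeting $\gamma$ transversely exactly once (after a small perturbation at the closing point), hence with intersection $\pm 1$ with $[\gamma]$. This forces $L(\gamma)\leq T+\ell(\gamma)/2$. On the other hand, any closed curve with nonzero intersection with $[\gamma]$ is itself homologically nontrivial, so $L(\gamma)\geq \sys(S)=\ell(\gamma)$. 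Combining the two gives $T\geq L(\gamma)-\ell(\gamma)/2\geq L(\gamma)/2$; plugging into the coarea bound yields $\ell(\gamma)\cdot L(\gamma)\leq 2\area(S)$.

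The main delicate point I anticipate is the bookkeeping in identifying $[\Sigma_t^\pm]$ with $[\gamma]$ in $H_1(S;\Z)$, and dealing with non-regular values of the distance function $f$; the latter is routine using the Lipschitz (Eilenberg/Federer) version of coarea, while the former reduces to the observation that $V_t^+$ regarded in $S$ is a compact $2$-submanifold with boundary whose boundary, as a $1$-cycle, must vanish in $H_1(S;\Z)$.
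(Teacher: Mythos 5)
Your argument is correct, but it follows a genuinely different route from the paper's. The paper works directly on $S$: it fixes a point $p\in\gamma$ and grows metric balls $D(p,R)$ for $R<L(\gamma)/2$; the key step is that $\gamma\cap D(p,R)$ vanishes in $H_1(D(p,R),\partial D(p,R);\Z)$ (established, as in the proof of Theorem \ref{th:cup}, via Lefschetz--Poincar\'e duality and Gromov's curve-factoring lemma, which is exactly where the restriction $R<L(\gamma)/2$ enters), so that length-minimality of $\gamma$ in its homology class forces $\ell(\partial D(p,R))\geq 2\,\ell(\gamma\cap D(p,R))\geq 4R$; integrating by the coarea formula yields the stronger intermediate inequality $\area(S)\geq L(\gamma)^2/2$, and the lemma follows from $L(\gamma)\geq \sys(S)=\ell(\gamma)$. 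You instead cut along $\gamma$ (using that a systolic geodesic is simple and non-separating), grow collars of the two boundary copies in the cut surface, bound each level curve from below by $\sys(S)=\ell(\gamma)$ because it is homologous to $\pm[\gamma]$, and then convert the width $T$ of the cut surface into $L(\gamma)$ by a closing-up surgery that produces a loop of intersection number $\pm1$ with $\gamma$ and length at most $T+\ell(\gamma)/2$. Both arguments are sound and give the same constant. Yours is more elementary and self-contained, replacing the duality-plus-factoring machinery by the observation that boundaries of subsurfaces are null-homologous, at the cost of needing simplicity of $\gamma$ and the extra bookkeeping around $T$ and the regluing; the paper's version runs parallel to its higher-dimensional Theorem \ref{th:cup} and records the slightly stronger byproduct $2\area(S)\geq L(\gamma)^2$ along the way. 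The technical points you flag (non-regular values of the distance function, identifying the class of the level sets) are handled at the same level of rigor as in the paper's own coarea arguments, so they are not gaps.
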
 

This is a sort of dual version of Theorem \ref{th:cup} for $\Z$-coefficients and specialized to the systole.

\begin{proof}[Proof of the lemma]
Pick a point $p$ on $\gamma$ and denote by $D(p,R)$ the set of points at a distance at most $R$ from $p$. For any $R \in (0,L(\gamma)/2)$ the curve $\gamma \cap D(p,R)$ is homologically trivial in the relative homology $H_1(D(p,R),\partial D(p,R); \Z)$. The argument is the same as in the corresponding part of the proof of Theorem \ref{th:cup} but with $\Z$-coefficients.
Thus $\ell(\partial D(p,R))\geq 2 \ell(\gamma \cap D(p,R))=4R$ and, by the coarea formula,
$$
\area(S)\geq \area(D(p,L(\gamma)/2))= \int_0^{L(\gamma)\over2} \ell(\partial D(p,R)) \, dR\geq L(\gamma)^2/2\geq \ell(\gamma) \cdot L(\gamma)/2.
$$
\end{proof}

Suppose in first instance that the homological systole satisfies
$$
\sys(S)\geq \sqrt{\pi\area(S)\over g-1}.
$$
By \cite[Theorem 1.1]{BPS12} there exists a family $(\gamma_1,\ldots,\gamma_{2g})$ of loops on $S$ that generate $H_1(S;\Z)$ and whose lengths satisfy the following inequalities:
\begin{equation}\label{eq:length.basis}
\length(\gamma_{k}) \leq 2^{16} \cdot g \cdot \frac{\log(2g-k+2)}{2g-k+1} \cdot \sqrt{\area(S)\over 4 \pi (g-1)}.
\end{equation}
for $k=1,\ldots,2g$.
Thus
$$
\prod_{i=k}^{2g} \length(\gamma_{k}) \leq {2^{32g}g^{2g}\over (2g)!} \cdot \prod_{k=1}^{2g} \log(2g-k+2) \cdot \left({\area(S)\over 4 \pi (g-1)}\right)^g.
$$
From the following refinement of Stirling's formula (see~\cite[p.~54]{Feller})
$$
n!>\sqrt{2\pi n} \left(\frac{n}{e}\right)^n e^{1/(12n +1)} 
$$
we can deduce that
$$
\prod_{k=1}^{2g} \ell(\gamma_k)\leq {C^g} \cdot \left(\log 2g \right)^{2g} \cdot \left({\area(S)\over g}\right)^g
$$
with $C={2^{31} e^2 \over 4 \pi}< 2^{31}$.
In particular we get
$$
\prod_{i=k}^{2g} \ell_g(\gamma_k)\leq {C^g} \cdot \area(S)^g.
$$
Suppose now that the homological systole satisfies
$$
\sys(S)\leq \sqrt{\pi \area(S)\over g-1}
$$ 
and pick a class $\gamma$ realizing the homological systole. By Lemma \ref{lemma.inter.systole} 
$$
\ell(\gamma) \cdot L(\gamma) \leq 2 \area(S).
$$
Because $\gamma$ realizes the systole, we can find a closed geodesic $\delta$ of length $L(\gamma)$ and such that $[\gamma]\cap [\delta] =1$. Set $\gamma_{2g}=\gamma$ and $\gamma_{2g-1}=\delta$. We cut $S$ along $\gamma_{2g}$ and glue two round hemispheres as caps along the two components of the boundary $\partial \overline{(S\setminus \gamma)}$ to get a new Riemannian closed surface $S'$ of genus $g-1$. Its area satisfies 
$$
\area(S')\leq \area(S)+{\ell(\gamma)^2\over \pi}\leq {g \over g-1} \area(S)
$$
by assumption on the systole. 
By induction (and using Remark \ref{rem:prodtorus}) we know that there exists a family of closed geodesics $\gamma_1,\ldots,\gamma_{2g-2}$ on $S'$ inducing a basis of $H_1(S';\Z)$ and whose length product satisfies 
$$
\prod_{k=1}^{2g-2} \ell(\gamma_k)\leq {C^{g-1}} \cdot \area(S')^{g-1}.
$$
We can homotope all these curves in $S\setminus \gamma \subset S'$ without increasing their length. The corresponding curves in $S$ define, together with $\gamma_{2g}$ and $\gamma_{2g-1}$, a family $(\gamma_1,\ldots,\gamma_{2g})$  of closed geodesics. It is straightforward to check that this family induces a basis of $H_1(S;\Z)$ using the symplectic nature of $\cdot \cap \cdot$. Furthermore
\begin{eqnarray*}
\prod_{k=1}^{2g} \ell(\gamma_k)&\leq& 2 \area(S)\cdot {C^{g-1}} \cdot \area(S')^{g-1}\\
&\leq& 2 C^{g-1} \left({g \over g-1}\right)^{g-1} \area(S)^g\\
&\leq &2e \cdot {C^{g-1}} \cdot \area(S)^g
\end{eqnarray*}
which concludes the argument. 
\end{proof}

Observe that, as a byproduct of the proof, we have the following. 

\begin{proposition}\label{prod.surfaces.sys.big}
Let $S$ be an orientable Riemannian surface of genus $g$ whose homological systole satisfies
$$
\sys(S) \geq \sqrt{\area(S)\over 4\pi (g-1)}.
$$
Then there exist closed geodesics $\gamma_1,\ldots,\gamma_{2g}$ inducing a basis of $H_1(S;\Z)$ and whose length product satisfies the following inequality:
$$
\prod_{k=1}^{2g} \ell(\gamma_k)\leq C^g \cdot \left(\log 2g \right)^{2g} \cdot \left({\area(S)\over g}\right)^g
$$
with $C<2^{31}$.
\end{proposition}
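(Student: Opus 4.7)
This proposition is extracted from the first half of the proof of Theorem \ref{prod.Z.surfaces}, so the plan is simply to carry out that same argument in isolation. The key input is the result of Buser--Parlier--Sabourau \cite[Theorem 1.1]{BPS12}: under the given homological systole hypothesis it produces a family of loops $(\gamma_1,\ldots,\gamma_{2g})$ generating $H_1(S;\Z)$ whose lengths satisfy the bound (\ref{eq:length.basis}),
$$
\ell(\gamma_k)\leq 2^{16} \cdot g \cdot \frac{\log(2g-k+2)}{2g-k+1} \cdot \sqrt{\frac{\area(S)}{4\pi(g-1)}}.
$$
Replacing each loop by a length-minimising closed geodesic in its free homotopy class does not change its homology class and only decreases its length, so we may assume the $\gamma_k$ are closed geodesics and still form a basis of $H_1(S;\Z)$.

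Next I would take the product of these bounds over $k=1,\ldots,2g$. The index-dependent denominators telescope as $\prod_{k=1}^{2g}(2g-k+1)=(2g)!$, while the logarithms are uniformly bounded by $\log(2g+1)$. Gathering these contributions produces
$$
\prod_{k=1}^{2g}\ell(\gamma_k)\leq \frac{2^{32g}\,g^{2g}}{(2g)!}\,(\log(2g+1))^{2g}\,\left(\frac{\area(S)}{4\pi(g-1)}\right)^g.
$$

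The remaining work is purely arithmetic. Applying the refined Stirling estimate $n!>\sqrt{2\pi n}(n/e)^n e^{1/(12n+1)}$ with $n=2g$ (as quoted in the proof of Theorem \ref{prod.Z.surfaces}) bounds $g^{2g}/(2g)!$ by $(e/2)^{2g}$ up to a sub-exponential factor; and the prefactor $(4\pi(g-1))^{-g}$ can be written as a constant times $g^{-g}$ since $(g/(g-1))^g \to e$. Collecting all numerical constants into a single exponential $C^g$ recovers exactly the constant $C=2^{31}e^2/(4\pi)$ computed in the proof of Theorem \ref{prod.Z.surfaces}, and since $e^2/(4\pi)<1$ we have $C<2^{31}$, as claimed. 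The only potential obstacle is precisely this arithmetic book-keeping — verifying that none of the absorbed sub-exponential factors push $C$ past the $2^{31}$ threshold — but this is already checked in the analogous step of Theorem \ref{prod.Z.surfaces}. No additional geometric input is required.
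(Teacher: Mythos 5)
Your proposal is correct and essentially identical to the paper's argument: the paper obtains this proposition explicitly as a byproduct of the first case of the proof of Theorem \ref{prod.Z.surfaces}, namely applying \cite[Theorem 1.1]{BPS12}, multiplying the resulting length bounds so that the denominators telescope to $(2g)!$, and controlling $g^{2g}/(2g)!$ via the refined Stirling estimate to arrive at $C=2^{31}e^{2}/(4\pi)<2^{31}$. Your extra remark about replacing the loops from \cite{BPS12} by length-minimising closed geodesics in their free homotopy classes is a correct and harmless bit of additional care that the paper leaves implicit.
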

 
Because 
$$
H_1(M,\Z_2) \simeq H_1(M,\Z) \otimes \Z_2,
$$ 
we deduce that the closed geodesics $\gamma_1,\ldots,\gamma_{2g}$ above also induce a $\Z_2$-homology basis, thus proving Proposition \ref{prod.surfaces.sys.big.intro}.

\begin{remark}
A similar result to Theorem \ref{prod.Z.surfaces} and Proposition \ref{prod.surfaces.sys.big} holds for non-orientable Riemannian surfaces. Indeed the analog of the upper bounds (\ref{eq:length.basis}) on the successive lengths of a $\Z$-homology basis still holds true in the non-orientable case, see \cite[Remark 2.4]{BPS12}.
\end{remark}

\medskip

\subsection{Length product of $g$ homologically independent closed geodesics}

Using the same approach as in proof of Theorem \ref{prod.Z.surfaces}, we can prove the following.

\begin{proposition}\label{surfaces.gcurves}
Let $S$ be an orientable Riemannian surface of genus $g$. 

Then there exist closed geodesics $\gamma_1,\ldots,\gamma_{g}$ whose homology classes are independent in $H_1(S;\Z)$ and whose length product satisfies the following inequality:
$$
\prod_{k=1}^{g}\ell(\gamma_k)\leq C^g \log(2g)^g \left(\frac{\area(S)}{g}\right)^{g/2}
$$
with $C<2^{31}$.
\end{proposition}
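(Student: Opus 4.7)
The plan is to follow the same dichotomy as in the proof of Theorem \ref{prod.Z.surfaces}, splitting according to whether the homological systole of $S$ is large or small relative to $\sqrt{\pi\,\area(S)/(g-1)}$.

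In the large-systole case ($\sys(S)\geq \sqrt{\pi\,\area(S)/(g-1)}$) I invoke \cite[Theorem 1.1]{BPS12} to obtain a $\Z$-homology basis of closed geodesics $\gamma_1,\dots,\gamma_{2g}$ whose successive lengths satisfy (\ref{eq:length.basis}), and keep only the first $g$ of them. Their product is then at most
$$
(2^{16}g)^g\cdot \frac{g!}{(2g)!}\cdot \prod_{k=1}^g \log(2g-k+2)\cdot \Bigl(\frac{\area(S)}{4\pi(g-1)}\Bigr)^{g/2}.
$$
Using $\log(2g-k+2)\leq \log(2g+1)$ for each $k$, together with the same Stirling-type estimate as in the proof of Theorem \ref{prod.Z.surfaces} to bound $(2g)!/g!$ from below by a constant multiple of $(4g/e)^g$, this rearranges into the target form $C^g\log(2g)^g(\area(S)/g)^{g/2}$. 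The dominant contribution to $C$ is $2^{16}\cdot e/4$, which is comfortably below $2^{31}$.

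In the small-systole case I take $\gamma_g$ to be a closed geodesic realising $\sys(S)$, which is automatically a non-separating simple closed curve, cut $S$ along $\gamma_g$, and cap off the two resulting boundary circles with round hemispheres of matching perimeter to produce a closed Riemannian surface $S'$ of genus $g-1$ with $\area(S')\leq \area(S)\cdot g/(g-1)$. By induction on $g$, $S'$ admits $g-1$ closed geodesics $\gamma_1',\dots,\gamma_{g-1}'$ whose homology classes are independent in $H_1(S';\Z)$ and whose length product is controlled by the inductive bound. Because each hemispherical cap has diameter $\ell(\gamma_g)/2$ and boundary length $\ell(\gamma_g)$, any geodesic arc crossing a cap can be replaced by an arc along the cap boundary without increasing length, so each $\gamma_k'$ can be pushed into $S\setminus\gamma_g\subset S$; replacing the result by a shortest geodesic representative on $S$ produces closed geodesics $\gamma_1,\dots,\gamma_{g-1}$ of length at most $\ell(\gamma_k')$. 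A direct calculation using $(g/(g-1))^{g-1}\leq e$ and $\sqrt{\pi g/(g-1)}\leq \sqrt{2\pi}$ then shows that adjoining $\gamma_g$ closes the inductive step against the same constant $C$ as above; the base case $g=1$ is the classical systolic inequality on the torus recalled in Remark \ref{rem:prodtorus}.

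The main obstacle I anticipate is the homological bookkeeping after the cut-and-cap step. Denoting by $\Sigma$ the surface with boundary obtained by cutting $S$ along $\gamma_g$, with boundary circles $c_1,c_2$, one has $[c_1]+[c_2]=0$ in $H_1(\Sigma;\Z)$, so the inclusion $\Sigma\hookrightarrow S$ is injective on $H_1$ with image of codimension one generated by classes of curves disjoint from $\gamma_g$, whereas the map $H_1(\Sigma;\Z)\to H_1(S';\Z)$ kills $[c_1]$ and $[c_2]$ separately. Lifting each $[\gamma_k']$ to a class $[\hat\gamma_k]\in H_1(\Sigma;\Z)$, and using that $[c_1]$ maps to $\pm[\gamma_g]$ in $H_1(S;\Z)$, I will verify that independence of the $[\gamma_k']$ in $H_1(S';\Z)$ — equivalently, independence of the lifts modulo $\langle c_1\rangle$ — transfers by injectivity into independence of $[\gamma_1],\dots,[\gamma_{g-1}],[\gamma_g]$ in $H_1(S;\Z)$.
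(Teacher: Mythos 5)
Your argument is correct and is essentially the paper's own proof: the same dichotomy on $\sys(S)$ relative to $\sqrt{\pi\area(S)/(g-1)}$, with the large-systole case handled by the BPS12 basis (the paper simply quotes Proposition~\ref{prod.surfaces.sys.big} and keeps the shortest $g$ curves, while you recompute the product directly from the successive length bounds) and the small-systole case by the identical cut--cap--induct surgery using $(g/(g-1))^{g-1}\leq e$ and $\sqrt{\pi g/(g-1)}\leq \sqrt{2\pi}$. Your explicit verification that independence in $H_1(S';\Z)$ transfers, via the cut surface $\Sigma$ and the class $[c_1]\mapsto\pm[\gamma_g]$, to independence of $[\gamma_1],\ldots,[\gamma_g]$ in $H_1(S;\Z)$ fills in a step the paper leaves implicit, but it is not a different route.
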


\begin{proof}The proof is quite similar to proof of Theorem \ref{prod.Z.surfaces}.
First notice that if the homological systole satisfies
$$
\sys(S) \geq \sqrt{\area(S)\over 4\pi (g-1)}
$$
the result follows from Proposition \ref{prod.surfaces.sys.big}.
Suppose now that the homological systole satisfies
$$
\sys(S)\leq \sqrt{\pi \area(S)\over g-1}
$$
and pick a class $\gamma$ realizing the homological systole. We cut $S$ along $\gamma$ and glue two round hemisphere caps to get a new Riemannian closed surface $S'$ of genus $g-1$ and area 
$$
\area(S')\leq {g \over g-1} \area(S).
$$
By induction we know that there exist a family of closed geodesics $\gamma_1,\ldots,\gamma_{g-1}$ on $S'$ which are $\Z$-homologically independent and whose length product satisfies 
$$
\prod_{k=1}^{g-1} \ell(\gamma_k)\leq C^{g-1} \log(2(g-1))^{g-1} \left(\frac{\area(S')}{g-1}\right)^{g-1\over2}.
$$
We can homotope all these curves in $S\setminus \gamma \subset S'$ without increasing their length. The corresponding curves in $S$ define together with $\gamma_g:=\gamma$ a family $(\gamma_1,\ldots,\gamma_{g})$ of $\Z$-homologically independent closed geodesics. Furthermore
\begin{eqnarray*}
\prod_{k=1}^{g} \ell(\gamma_k)&\leq& C^{g-1} \log(2(g-1))^{g-1} \left(\frac{\area(S')}{g-1}\right)^{g-1\over2} \cdot \sqrt{\pi \area(S)\over g-1}\\
&\leq& C^{g-1} \log(2g)^{g-1} \left(\left(\frac{g}{g-1}\right)^2\frac{\area(S)}{g}\right)^{g-1\over2} \cdot \sqrt{\pi g\over g-1} \cdot \sqrt{\area(S) \over g}\\
&\leq &\sqrt{2\pi} \cdot C^{g-1} \log(2g)^{g-1} \cdot \left(\frac{g}{g-1}\right)^{g-1} \cdot \left(\frac{\area(S)}{g}\right)^{g\over2}\\
&\leq&\sqrt{2\pi} \cdot e \cdot C^{g-1} \log(2g)^{g-1} \cdot \left(\frac{\area(S)}{g}\right)^{g\over2}
\end{eqnarray*}
and the result follows.
\end{proof}

The proof also works with $\Z_2$-coefficients thus proving Proposition \ref{surfaces.gcurves.intro}.

\subsection{Length product of two intersecting loops for hyperbolic surfaces} 

If Conjecture \ref{conj.surfaces.Z} is true, this would imply the following length product inequality for two intersecting closed geodesics.

\begin{conjecture}\label{conj.inter.surfaces}
There exists a constant $c>0$ such that the following holds. On any orientable closed Riemannian surface $S$ of genus $g$ there are two closed geodesics $\gamma$ and $\delta$ with $[\gamma]\cap [\delta]\neq 0$ and whose length product satisfies the following inequality:
$$
\ell(\gamma)\cdot \ell(\delta)\leq \left(c \, \log (2g) \right)^{2} \cdot {\area(S)\over g}.
$$
\end{conjecture}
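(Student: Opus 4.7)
The plan is to derive this conjecture from the stronger Conjecture \ref{conj.surfaces.Z}, combined with the Pfaffian calculation already carried out in Section \ref{sec:manifolds}. Assume Conjecture \ref{conj.surfaces.Z} and apply it to $S$ (whose first $\Z_2$-Betti number is $b=2g$): one obtains a $\Z_2$-homology basis of closed geodesics $(\gamma_1,\ldots,\gamma_{2g})$ with
$$
\prod_{i=1}^{2g} \ell(\gamma_i) \leq (c \log 2g)^{2g} \left(\frac{\area(S)}{2g}\right)^g.
$$

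The key step is then to match these $2g$ curves into $g$ disjoint pairs, each of which intersects nontrivially. Recall from Section \ref{sec:manifolds} that for an orientable surface $\Det_2 F_S$ equals the reduction modulo $2$ of the Pfaffian of the $\Z_2$-intersection matrix $([\gamma_i] \cap [\gamma_j])_{i,j}$, and this Pfaffian was shown to equal $1$. Since the Pfaffian is a sum, indexed by partitions of $\{1,\ldots,2g\}$ into pairs, of products $\prod_k ([\gamma_{i_k}] \cap [\gamma_{j_k}])$, its non-vanishing modulo $2$ forces at least one such partition to give a non-vanishing product, i.e.\ a perfect matching $\{(\gamma_{i_k}, \gamma_{j_k})\}_{k=1}^g$ with $[\gamma_{i_k}] \cap [\gamma_{j_k}] = 1$ for every $k$. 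Applying the arithmetic-geometric mean inequality to the $g$ products $\ell(\gamma_{i_k}) \ell(\gamma_{j_k})$ (which together re-assemble the full length product) yields
$$
\min_k \ell(\gamma_{i_k}) \ell(\gamma_{j_k}) \leq \left(\prod_{i=1}^{2g} \ell(\gamma_i)\right)^{1/g} \leq (c \log 2g)^2 \cdot \frac{\area(S)}{2g},
$$
which delivers the desired bound (with constant $c/\sqrt{2}$ in place of $c$).

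The main obstacle is therefore Conjecture \ref{conj.surfaces.Z} itself, which remains open. Failing a full proof of it, a direct approach would be to take $\gamma$ realizing the homological systole and build a short transversal $\delta$. Gromov's surface systolic inequality (in its logarithmic form) gives $\ell(\gamma) \lesssim \log(g) \sqrt{\area(S)/g}$, which matches one factor of the target. The difficulty is producing $\delta$ with $[\gamma] \cap [\delta] \neq 0$ and $\ell(\delta) \lesssim \log(g) \sqrt{\area(S)/g}$: Lemma \ref{lemma.inter.systole} only yields $\ell(\delta) \leq 2 \area(S)/\ell(\gamma)$, worse than required by a factor of order $\sqrt{g}/\log g$. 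Closing this gap is precisely what is achieved in the hyperbolic setting of Theorem \ref{th.inter.hyp.intro}, where collar lemmas and counting techniques are available; a genuinely Riemannian argument would presumably require either a refined coarea estimate on tubular neighborhoods of $\gamma$, or an induction on genus that keeps track of an intersecting pair through each cut-and-paste step rather than merely of the symplectic structure of $H_1(S;\Z)$.
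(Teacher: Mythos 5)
Your deduction matches the paper's: the statement is left as a conjecture there, established only conditionally on Conjecture \ref{conj.surfaces.Z} (and unconditionally for hyperbolic metrics by a separate collar argument), and the paper's conditional derivation is essentially yours --- it extracts the intersecting perfect matching from the non-vanishing of $\omega^g$ via Gutt's lemma, which is the same expansion as your Pfaffian-mod-$2$ argument, followed by the same pigeonhole on the $g$ pairwise products. Your closing remarks on why a direct unconditional proof is blocked (Lemma \ref{lemma.inter.systole} losing a factor of order $\sqrt{g}/\log g$) are also accurate.
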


Let us explain how to deduce Conjecture \ref{conj.inter.surfaces} from Conjecture \ref{conj.surfaces.Z}. If Conjecture \ref{conj.surfaces.Z} is true, $S$ contains a family $(\gamma_1,\ldots,\gamma_{2g})$ of closed geodesics which form a basis of $H_1(S;\Z)$ and with length product
$$
\prod_{k=1}^{2g} \ell(\gamma_k)\leq \left(c \log (2g) \right)^{2g} \cdot \left({\area(S)\over g}\right)^g.
$$
The intersection form on $H_1(S,;\Z)$ induces a symplectic form. This implies that we can reorder the elements of this basis such that $[\gamma_{2i-1}]Ê\cap [\gamma_{2i}]\neq 0$ for $i=1,\ldots,g$ according to the following lemma due to Gutt.

\begin{lemma}\label{LemGut}\cite{Gut18}
Let $\omega$ a symplectic form and $(e_1,\ldots,e_{2n})$ a basis of $\R^{2n}$. Then there exists a permutation $\sigma \in \mathfrak{S}_{2n}$ such that
$$
\omega(e_{\sigma(2k-1)},e_{\sigma(2k)})\neq 0
$$
for $k=1,\ldots,n$.
\end{lemma}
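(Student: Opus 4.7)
The plan is to reformulate the statement as the non-vanishing of some monomial in the Pfaffian expansion of the Gram matrix of $\omega$ in the given basis, and to conclude via the non-degeneracy of $\omega$.

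First I would form the skew-symmetric $2n\times 2n$ matrix $A = (a_{ij})$ with entries $a_{ij} := \omega(e_i,e_j)$. Since $(e_1,\ldots,e_{2n})$ is a basis of $\R^{2n}$ and $\omega$ is non-degenerate, the matrix $A$ is invertible, so $\det A \neq 0$. By the classical identity $\det A = \mathrm{Pf}(A)^2$, the Pfaffian $\mathrm{Pf}(A)$ is itself non-zero.

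Next, I would invoke the combinatorial expansion of the Pfaffian as a signed sum indexed by the perfect matchings of $\{1,\ldots,2n\}$:
\[
\mathrm{Pf}(A) \;=\; \sum_{M}\, \varepsilon(M)\, \prod_{\{i,j\}\in M} a_{ij},
\]
where the sum runs over all partitions $M$ of $\{1,\ldots,2n\}$ into $n$ unordered pairs and $\varepsilon(M)\in\{\pm 1\}$ is a sign. Since this sum is non-zero, at least one of its terms must be non-zero. In other words, there is a perfect matching $M = \{\{i_1,j_1\},\ldots,\{i_n,j_n\}\}$ with $\omega(e_{i_k},e_{j_k}) = a_{i_k,j_k}\neq 0$ for every $k=1,\ldots,n$. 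Defining $\sigma\in\mathfrak{S}_{2n}$ by $\sigma(2k-1) := i_k$ and $\sigma(2k) := j_k$ then yields the desired permutation.

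There is no substantial obstacle once the Pfaffian viewpoint has been identified; the argument is purely linear-algebraic. One might naively attempt an induction instead---peel off a symplectic plane $\mathrm{span}(e_i,e_j)$ for some pair with $\omega(e_i,e_j)\neq 0$ and apply the hypothesis to the remaining $2(n-1)$ vectors---but this approach breaks down because projecting the other basis vectors onto the symplectic complement shifts each value $\omega(e_r,e_s)$ by $\omega(p_r,p_s)$, where $p_r,p_s$ are the $\mathrm{span}(e_i,e_j)$-components; hence non-vanishing of $\omega$ on the projected pairings does not transfer back to the original vectors. The Pfaffian identity circumvents this difficulty in one stroke, which is why I would set things up around it from the start.
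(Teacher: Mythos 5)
Your proof is correct and is essentially the paper's argument in matrix language: the Pfaffian of the Gram matrix $A=(\omega(e_i,e_j))$ is, up to a non-zero constant, exactly $\omega^n(e_1,\ldots,e_{2n})$, and the paper likewise expands this quantity as a signed sum of products over perfect matchings and extracts a non-vanishing term. The only cosmetic difference is that you certify non-vanishing via $\det A=\mathrm{Pf}(A)^2\neq 0$, whereas the paper invokes directly that $\omega^n$ is a volume form.
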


\begin{proof}
The result is equivalent to finding a permutation $\sigma \in {\mathfrak S}_{2n}$ such that
$$
\prod_{k=1}^{n}\omega(e_{\sigma(2k-1)},e_{\sigma(2k)})\neq 0.
$$
Recall that if $\eta$ and $\omega$ are respectively an alternating $k$-form and $m$-form then
$$
\eta \wedge \omega (e_1,\ldots,e_{k+m})=\sum_{\sigma \in \mathrm{Sh}_{k,m}} sgn(\sigma)\cdot\eta(e_{\sigma(1)},\ldots, e_{\sigma(k})\cdot \omega(e_{\sigma(k+1)},e_{\sigma(k+m)})
$$
where $\mathrm{Sh}_{k,m} \subset \Sigma_{k+m}$ is the subset of $(k,m)$ shuffles, i.e., all permutations $\sigma$ of $\{1, \ldots, k + m\}$ such that $\sigma(1)<\ldots<\sigma(k)$ and $\sigma(k+1)<\ldots<\sigma(k+m)$). Then we compute
\begin{eqnarray*}
\omega^n(e_1,\ldots,e_n)&=&\omega^{n-1} \wedge \omega (e_1,\ldots,e_{2n})\\
&=&\sum_{\sigma \in \mathrm{Sh}_{2n-2,2}} \text{sgn}(\sigma)\cdot\omega^{n-1}(e_{\sigma(1)},\ldots, e_{\sigma(2n-2)})\cdot \omega(e_{\sigma(2n-1)},e_{\sigma(2n)})\\
&=&\sum_{\sigma \in \mathrm{Sh}_{2,\ldots,2}} \text{sgn}(\sigma)\cdot \prod_{i=0}^{n-1} \omega(e_{\sigma(2i+1)}, e_{\sigma(2i+2)}).
\end{eqnarray*}
Because $\omega$ is symplectic, we know that $\omega^n (e_1,\ldots,e_{2n})\neq 0$ and thus at least one of the products is non zero. 
\end{proof}

\begin{remark}
This proof came up after a discussion between J.~Gutt and the first author about how to prove a symplectic analog of Minkowski's first theorem. Here we can see the germ of one of the mechanisms used in the proof of Theorem \ref{manifolds.Z_2} and we are very grateful to him for showing us this nice argument. 
\end{remark}

Returning to the link between Conjecture \ref{conj.inter.surfaces} and Conjecture \ref{conj.surfaces.Z}, we can reorder the family of closed geodesics such that $[\gamma_{2i-1}]Ê\cap [\gamma_{2i}]\neq 0$ for $i=1,\ldots,g$.
This implies that 
$$
\min \{\ell(\gamma) \cdot \ell(\delta) \mid [\gamma] \cap [\delta]\neq0\} \leq \left(\prod_{k=1}^{2g} \ell(\gamma_k)\right)^{1/g}\leq \left(c \, \log (2g) \right)^{2} \cdot {\area(S)\over g}.
$$

Observe that it is not possible to directly deduce Conjecture \ref{conj.inter.surfaces} from Proposition \ref{surfaces.gcurves}: the shortest $g$ homologically independent closed geodesics might lie in a Lagrangian subspace of $H_1(M;\Z)$ for the intersection form. However for hyperbolic metrics we are able establish the result.

\begin{theorem}\label{th.inter.hyp}
There exists a constant $c>0$ such that the following holds. A genus $g$ closed orientable hyperbolic surface $S_0$ contains two closed geodesics $\gamma$ and $\delta$ with $[\gamma]\cap [\delta]=1$ and whose length product satisfies the following inequality:
$$
\ell(\gamma) \cdot \ell(\delta)\leq \left(c \, \log (2g)\right)^2.
$$
\end{theorem}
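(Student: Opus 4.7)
I would construct the pair $(\gamma,\delta)$ directly, using two features specific to hyperbolic metrics: the Buser--Sarnak upper bound on the homological systole, and the collar lemma together with the hyperbolic tube area formula. The strategy is to take $\gamma$ to be a short non-separating simple closed geodesic and to produce $\delta$ by closing up a shortest orthogeodesic arc in the surface cut along $\gamma$.

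Let $\gamma$ be a simple closed geodesic of $S_0$ realizing the homological systole. A standard ball-packing comparison in $\HH^2$ using $\area(S_0) = 4\pi(g-1)$ yields
\[
\ell(\gamma) \leq 2\log(4g-2) \leq c_1 \log(2g)
\]
for a universal constant $c_1$. Since $\gamma$ is homologically nontrivial it is non-separating, so cutting $S_0$ along $\gamma$ produces a hyperbolic surface $S'$ of genus $g-1$ with two totally geodesic boundary components $\gamma^{\pm}$ of length $\ell(\gamma)$. Let $a$ be a shortest geodesic arc in $S'$ joining $\gamma^+$ to $\gamma^-$; closing it with the shorter subarc of $\gamma$ between its endpoints and straightening in $S_0$ yields a simple closed geodesic $\delta$ with $i(\gamma,\delta) = 1$ and
\[
\ell(\delta) \leq \ell(a) + \tfrac{\ell(\gamma)}{2}.
\]

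The crux of the proof is the estimate
\[
\ell(a) = d_{S'}(\gamma^+,\gamma^-) \leq 2w(\ell(\gamma)) + C\log(2g),
\]
where $w(\ell) = \operatorname{arcsinh}(1/\sinh(\ell/2))$ is the collar half-width provided by the collar lemma. The term $2w(\ell(\gamma))$ reflects the obligatory traversal of the two embedded half-collars attached to $\gamma^{\pm}$ in $S'$, while the logarithmic term controls the distance between the outer boundaries of these collars through the thick complement. I would obtain it by combining the tube area formula $\area(\{q\in S':d(q,\gamma^\pm)\leq r\}) \leq 2\ell(\gamma)\sinh(r)$ with a covering/packing argument on the complement of the collar by balls of the Margulis constant, using $\area(S') = 4\pi(g-1)$ to bound the packing number. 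The main obstacle is precisely this last step: the diameter of the thick part of $S_0$ is not universally $O(\log g)$, so the presence of further short geodesics creating additional thin parts must be handled carefully, for instance by tracking how a shortest orthogeodesic navigates around these obstructions while keeping the area comparison in play.

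Once the bound on $\ell(a)$ is secured, the conclusion follows from
\[
\ell(\gamma)\cdot\ell(\delta) \leq 2\ell(\gamma)w(\ell(\gamma)) + C\ell(\gamma)\log(2g) + \tfrac{\ell(\gamma)^2}{2}.
\]
The first term is uniformly bounded: for $\ell(\gamma)\leq 1$ one has $\ell(\gamma)w(\ell(\gamma)) \leq \ell(\gamma)\log(2/\ell(\gamma)) \leq 2/e$, and for $\ell(\gamma)\geq 1$ one has $w(\ell(\gamma)) \leq \log 2$ so $\ell(\gamma)w(\ell(\gamma)) \leq c_1 \log(2g) \cdot \log 2$. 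The other two terms are each $O((\log(2g))^2)$ thanks to $\ell(\gamma)\leq c_1\log(2g)$. Combining these estimates yields $\ell(\gamma)\,\ell(\delta) \leq (c\log(2g))^2$ for a suitable universal constant $c$.
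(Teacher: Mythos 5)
The construction of $\delta$ by closing up a shortest orthogeodesic arc $a$ from $\gamma^+$ to $\gamma^-$ is sound, and so is your final bookkeeping, but the crux estimate $\ell(a)\leq 2w(\ell(\gamma))+C\log(2g)$ is precisely where the proof is missing, and as stated it is false: if you insist on taking $\gamma$ to be the homological systole, you cannot bound $d_{S'}(\gamma^+,\gamma^-)$ by the collar width of $\gamma$ plus $O(\log g)$. For instance, put $\gamma$ (of length $\eps$) inside a one-holed torus whose boundary geodesic has length $L$; the hyperbolic pair-of-pants distance formula gives $d(\gamma^+,\gamma^-)\approx L/2+2\log(1/\eps)$ within that piece, and $L$ is in no way controlled by $g$. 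An arc escaping through the boundary of the handle still pays about $2\log(1/\eps)$ to leave the collar and must then traverse whatever other thin parts separate the two sides of $\gamma$; those thin parts come from \emph{other} short geodesics whose collar widths are governed by their own lengths, not by $\ell(\gamma)$, so the ball-packing argument on the ``thick complement'' does not close. The quantity $\ell(\gamma)\cdot\ell(\delta)$ with $\gamma$ the homological systole genuinely depends on the lengths of all the short geodesics standing between $\gamma^+$ and $\gamma^-$, so no bound of the form $(c\log 2g)^2$ can be extracted from this route without further input.

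The paper circumvents exactly this obstruction by refusing to commit in advance to the systole as one of the two curves. It removes the collars of \emph{all} homologically nontrivial simple closed geodesics $\eta_1,\dots,\eta_p$ of length below a universal constant $\eps_0$ and reglues, producing a surface $S_0'$ with $\sys(S_0')\geq\eps_0$; Theorem 1.1 of \cite{BPS12} applied to $S_0'$ yields two curves $\gamma',\delta'$ with nonzero intersection and length product at most $(C_0\log 2g)^2$. If these avoid the $\eta_i'$ they descend to $S_0$ directly; if $\gamma'$ crosses some $\eta_i'$, the output pair is instead the completion $\gamma$ of $\gamma'$ in $S_0$ together with $\eta_{i_0}$ itself, and the extra length of $\gamma$ incurred by crossing collars is absorbed because $\ell(\eta)\cdot\mathrm{arcsinh}\bigl(1/\sinh(\ell(\eta)/2)\bigr)$ is uniformly bounded. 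Your observation that $\ell(\gamma)\,w(\ell(\gamma))$ is bounded is the right mechanism, but it must be applied to the short curve you actually end up pairing with, which need not be the homological systole. To salvage your approach you would need a case analysis over the whole set of short homologically nontrivial geodesics, at which point it essentially becomes the paper's argument.
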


Recall that $\area(S_0)=4\pi(g-1)$ as the metric is hyperbolic. Thus Theorem \ref{th.inter.hyp} etablishes Conjecture \ref{conj.inter.surfaces} for hyperbolic surfaces.

\begin{proof}
 We first collect some information about the local geometry near simple closed geodesics of small length. 
 Let $\eta$ be a simple closed geodesic. According to the collar lemma (see for instance \cite[Theorem 4.1.1]{Bus92}) the collar 
 $$
 {\mathcal C}(\eta):=\{p \in S_0 \mid \text{dist} \, (p,\eta)\leq w(\eta)\}
 $$
of width $w(\eta):=\text{arcsinh} \left({1/ \sinhÊ\left({\ell(\eta)/ 2}\right)}\right)$ around $\eta$ is isometric to the Riemannian cylinder 
 $$
( [-w(\eta),w(\eta)]\times {\mathbb S}^1,ds^2=d\rho^2+\ell(\eta)^2\cosh^2 \rho \, dt^2).
 $$
 For any $l \in ]\ell(\eta),\ell(\eta)\cosh w(\eta)]$ denote by $\eta_l^+$ and $\eta_l^-$ the two parallel curves of ${\mathcal C}(\eta)$ above and below $\eta$ with length $l$. Note that they are never geodesics but are of constant curvature. Denote by ${\mathcal C}_l(\eta)$ the collar around $\eta$ with boundary curves $\eta_l^-$ and $\eta_l^+$. With this notation $ {\mathcal C}(\eta)= {\mathcal C}_l(\eta)$ for $l=L_\eta:=\ell(\eta)\cosh w(\eta)$.

 \begin{lemma}
 There exists a positive constant $\eps_0\leq 1$ such that the following holds.

If $\eta$ is a simple closed geodesic of length less than or equal to $\eps_0$, then
\begin{enumerate}
\item $\eps_0 \leq L_\eta$;
\item the distance $d(\eta)$ between $\eta_{\eps_0}^+$ and $\eta_{L_\eta}^+$ satisfies $d(\eta)\geq L_\eta/2$.
\end{enumerate}
 \end{lemma}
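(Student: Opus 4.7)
The argument takes place entirely inside the Fermi coordinates $(\rho,t) \in [-w(\eta),w(\eta)] \times \mathbb{S}^1$ provided by the collar lemma, in which the metric reads $d\rho^2 + \ell(\eta)^2\cosh^2\rho\,dt^2$. In this model a parallel curve of length $l$ lies at the radial level $\rho_l = \mathrm{arccosh}(l/\ell(\eta))$, and each radial line $\{t=\mathrm{const}\}$ is a unit-speed geodesic meeting every parallel curve orthogonally. Radial segments therefore realize the distance between two such circles, and so
\[
d(\eta) \;=\; w(\eta) - \mathrm{arccosh}\bigl(\eps_0/\ell(\eta)\bigr).
\]

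For (1), I would combine the collar identity $\sinh w(\eta) = 1/\sinh(\ell(\eta)/2)$ with $\cosh^2 = 1+\sinh^2$ to get
\[
L_\eta \;=\; \ell(\eta)\cosh w(\eta) \;=\; \sqrt{\ell(\eta)^2 + \bigl(\ell(\eta)/\sinh(\ell(\eta)/2)\bigr)^2}.
\]
The function $x \mapsto x/\sinh(x/2)$ is decreasing on $(0,\infty)$ with limit $2$ at $0^+$, hence is bounded below by $1/\sinh(1/2) > 1$ on $(0,1]$. Thus $L_\eta > 1 \geq \eps_0$ for every $\eps_0 \leq 1$ and every $\ell(\eta)\leq \eps_0$, which establishes (1).

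For (2), I would expand the formula for $d(\eta)$ using $\mathrm{arcsinh}(y) = \log(y+\sqrt{y^2+1})$ and $\mathrm{arccosh}(y) = \log(y+\sqrt{y^2-1})$. A direct computation then gives $d(\eta) \to \log(2/\eps_0)$ and $L_\eta \to 2$ as $\ell(\eta) \to 0^+$, so picking any $\eps_0$ strictly less than $2/e$ makes the limiting inequality $d(\eta) > L_\eta/2$ strict. Since both $d(\eta)$ and $L_\eta$ are smooth in $\ell(\eta)$ on $(0,\eps_0]$ and extend continuously to $\ell(\eta)=0$, this strict limiting inequality persists on a neighbourhood of $0$; shrinking $\eps_0$ further if necessary then yields the required uniform bound $d(\eta) \geq L_\eta/2$.

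The main technical obstacle I foresee is this last quantitative step: converting the two asymptotic statements into a single explicit admissible value of $\eps_0$ that works simultaneously for every $\ell(\eta) \in (0,\eps_0]$. The geometric content is fully encapsulated by the collar lemma; what remains is routine analytic bookkeeping, most easily handled by monotonicity of the involved hyperbolic functions or by a direct Taylor-type bound on the difference $d(\eta) - L_\eta/2$.
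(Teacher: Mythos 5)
Your proposal is correct and follows essentially the same route as the paper: explicit Fermi--coordinate formulas from the collar lemma, the limits of the relevant hyperbolic expressions as $\ell(\eta)\to 0^+$ (your values $\log(2/\eps_0)$ and $2$ are right), and continuity to extract a uniform $\eps_0$. The self-referential step you flag as the main obstacle --- that $d(\eta)$ itself depends on $\eps_0$, so ``shrinking $\eps_0$'' changes the quantity being bounded --- closes immediately once you note that $d(\eta)=w(\eta)-\mathrm{arccosh}(\eps_0/\ell(\eta))$ is decreasing in $\eps_0$, so shrinking $\eps_0$ only increases $d(\eta)$ while also shrinking the range of $\ell$ to control; the paper sidesteps the same circularity by rewriting condition (2) as $\eps_0\leq g(\ell):=\ell\cosh\bigl(w(\eta)-L_\eta/4\bigr)$ with $g$ independent of $\eps_0$ and $g(0^+)>1$. (Incidentally, the paper's proof actually establishes the weaker bound $d(\eta)\geq L_\eta/4$ despite the statement reading $L_\eta/2$; your computation targets the stated $L_\eta/2$ and correctly identifies the resulting threshold $\eps_0<2/e$.)
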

 
 \begin{proof}[Proof of the lemma]
Condition (1) is equivalent to 
 $$
\eps_0\leq f(\ell):= \ell \cdot \cosh \left(\text{arcsinh} \left({1/ \sinhÊ\left({\ell/ 2}\right)}\right)\right)
 $$
 for any $\ell\leq \eps_0$. Observe that $\lim_{\ell \to 0} f(\ell)= 2$ so define $\eps_1>0$ as the largest number such that
 $$
 \min_{\ell \in [0, \eps_1]} f(\ell)\geq 1
 $$
(possibly $\eps_1=\infty$).
Condition (1) is then satisfied for any $\eps_0\leq\min \{\eps_1,1\}$.

Because
$$
d(\eta)=w(\eta)-\text{arccosh}{\eps_0\over \ell(\eta)},
$$
Condition (2) is equivalent to
$$
w(\eta)- \text{arccosh} {\eps_0 \over \ell(\eta)}\geq {L_\eta\over 4}
$$
which holds if
$$
\eps_0\leq g(\ell):=\ell \cosh \left(\text{arcsinh} \left({1/ \sinhÊ\left({\ell/ 2}\right)}\right)-{f(\ell) / 4}\right)
$$
for any $\ell\leq \eps_0$.
Again $\lim_{\ell \to 0} g(\ell)=2$
 when $\ell \to 0$. Define $\eps_2>0$ as the largest number such that
 $$
 \min_{\ell \in [0, \eps_2]} g(\ell)\geq 1
 $$
(again we allow $\eps_2=\infty$). Condition (2) is satisfied for any $\eps_0\leq\min \{\eps_2,1\}$.

Now set $\eps_0:=\min \{\eps_1,\eps_2,1\}$ and the lemma is proved.
 \end{proof}
 
If $\sys(S_0)\geq \eps_0$, then Theorem 1.1 from \cite{BPS12} guarantees the existence of a family of loops $\gamma_1,\ldots,\gamma_{2g}$ in $S_0$ that generate $H_1(S_0;\Z)$ and such that 
$$
\length(\gamma_{k}) \leq C_0 \cdot g \cdot \frac{\log(2g-k+2)}{2g-k+1}
$$
for $k=1,\ldots,2g$ and with $C_0:=2^{16} / \eps_0$. Among the $g+1$ first ones, at least two of them have non-zero intersection and their length product is bounded from above by
$$
\left(C_0 \, \log (2g)\right)^2.
$$
Furthermore they belong to a minimal length homology basis (for successive lengths), and so they pairwise intersect at most once, hence exactly once. 


\medskip

Now suppose that $\sys(S_0)\leq \eps_0$.

Let $(\eta_1,\ldots,\eta_p)$ be the set of homologically non-trivial simple closed geodesics in $S_0$ of length less than $\eps_0$ and ordered such that $\ell(\eta_1)\leq\ldots \leq \ell(\eta_p)$. As $\eps_0\leq 1$ these loops do not pairwise intersect and moreover the corresponding collars ${\mathcal C}(\eta_i)$ are pairewise disjoint for $i=1,\ldots,p$. In particular $p\leq 3g-3$. For each $i=1,\ldots,p$ we cut off the collar ${\mathcal C}_{\eps_0}(\eta_i)$ and paste $(\eta_i)^+_{\eps_0}$ with $(\eta_i)^-_{\eps_0}$. We denote by $\eta_i'$ the new curve obtained from the pasting of these two curves in the new surface denoted by $S'_0$. 

The new surface $S'_0$ is no longer hyperbolic, but does have area less than the area of $S_0$. Furthermore its homological systole satisfies the following lower bound.

\begin{lemma}
$\sys(S'_0)\geq \eps_0$.
\end{lemma}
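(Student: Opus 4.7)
The plan is to argue by contradiction: suppose there exists a closed curve $\gamma'$ in $S_0'$ with $\ell(\gamma') < \eps_0$ that is non-trivial in $H_1(S_0';\Z)$. First I would note that the surgery does not change the homeomorphism type of the surface (removing a cylinder and regluing its two boundary circles is topologically trivial), giving a natural identification $S_0 \cong S_0'$ under which $\eta_i \leftrightarrow \eta_i'$ and homology classes correspond. I would then split into two cases based on whether $\gamma'$ meets the glued circles $\eta_i'$.

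In the first case, $\gamma'$ is disjoint from all $\eta_i'$. Then $\gamma'$ lies entirely in $\Sigma_* := S_0 \setminus \bigcup_i \mathcal{C}_{\eps_0}(\eta_i)^\circ \subset S_0$ with the same length in $S_0$. Since $\eps_0 \leq 1$, standard hyperbolic arguments (the collar lemma together with control of the injectivity radius in the thick part) force $\gamma'$ to be freely homotopic in $S_0$ to a nontrivial power of some short geodesic $\eta_i$ and to lie inside its collar $\mathcal{C}(\eta_i)$. Being in $\Sigma_*$, the curve must lie in the annular region $\mathcal{C}(\eta_i) \setminus \mathcal{C}_{\eps_0}(\eta_i)^\circ$, whose inner boundary $(\eta_i)^+_{\eps_0}$ has length $\eps_0$. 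Any essential loop in this annulus has length at least $\eps_0$, contradicting $\ell(\gamma') < \eps_0$.

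In the second case, $\gamma'$ crosses some $\eta_i'$. Let $R_i = \mathcal{C}(\eta_i) \setminus \mathcal{C}_{\eps_0}(\eta_i)^\circ \subset S_0'$ be the annular neighborhood of $\eta_i'$, consisting of two hyperbolic half-collars glued along $\eta_i'$. If $\gamma' \subset R_i$, then being non-trivial in $H_1(R_i;\Z) \cong \Z\langle [\eta_i']\rangle$, it wraps $\eta_i'$ at least once and hence has length at least $\eps_0$. If $\gamma'$ exits $R_i$, each arc of $\gamma'$ running from the outer boundary to $\eta_i'$ inside a half-collar contributes at least $d(\eta_i) \geq \eps_0/2$ by the previous lemma, and a topological argument shows that any such closed $\gamma'$ must perform at least two full half-collar traversals, yielding $\ell(\gamma') \geq \eps_0$, the desired contradiction.

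The main obstacle is rigorously handling the second case when $\gamma'$ enters and exits $R_i$ through the same outer boundary or has many crossings with $\eta_i'$. The crucial geometric input needed is that geodesic arcs in a hyperbolic half-collar joining two points on the inner boundary bow \emph{toward} the core geodesic $\eta_i$ (by convexity of the distance function to $\eta_i$), so they would pass through the removed region $\mathcal{C}_{\eps_0}(\eta_i)^\circ$ that is no longer part of $S_0'$. This obstructs any length-saving shortcut and justifies that arcs of $\gamma'$ near $\eta_i'$ either essentially hug $\eta_i'$ (contributing length proportional to their angular extent) or fully traverse a half-collar, in either case producing the needed $\eps_0$ bound.
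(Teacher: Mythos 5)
Your argument is correct and takes essentially the same route as the paper: curves meeting some $\eta_i'$ either stay in the glued double half-collar $R_i$ and wrap around it (length $\geq \eps_0$) or must traverse a half-collar twice (length $\geq 2d(\eta_i)\geq \eps_0$ by the preceding lemma), while curves disjoint from all the $\eta_i'$ are viewed inside $S_0$ and ruled out via the collar structure (your treatment of this case, using the thick--thin decomposition to place a short curve homotopic to a power of some $\eta_i$ inside $\mathcal{C}(\eta_i)\setminus \mathcal{C}_{\eps_0}(\eta_i)^{\circ}$, is in fact more detailed than the paper's). The convexity input you flag at the end as the ``main obstacle'' is not actually needed: the function $p\mapsto \dist(p,\eta_i')$ is $1$-Lipschitz on $R_i$ and equals $d(\eta_i)$ on $\partial R_i$, so any subarc of $\gamma'$ that enters $R_i$, touches $\eta_i'$ and leaves again automatically has length at least $2d(\eta_i)$, regardless of how geodesic arcs bend.
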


 \begin{proof}[Proof of the lemma]
 Observe that the homological systole is realized by a simple closed curve in $S'_0$.
 
Consider a closed curve $\gamma'$ with non-zero intersection with one of the $\eta'_i$. We denote by ${\mathcal C}(\eta'_i)$ the cylinder around $\eta'_i$ of width $d(\eta_i)$. The intersection of $\gamma'$ with ${\mathcal C}(\eta'_i)$ consists of at least one arc which is homotopically non-trivial relative to the boundary of the cylinder. In particular the length of $\gamma'$ is bounded from below by $2d(\eta_i)\geq \eps_0$. 
 
Now let $\gamma'$ denotes a homologically non-trivial simple closed curve which has zero intersection with any of the curves $\eta'_i$ and whose homology class is different from any of the classes $[\eta'_i]$ for $i:=1,\ldots,p$. If $\gamma'$ does not intersect any of the $\eta'_i$'s we are done: $\gamma'$ actually coincides with a homologically non-trivial simple closed curve in $S_0$ different from any of the $\eta_i$'s so its length is at least $\eps_0$. If $\gamma'\cap \eta'_i\neq \emptyset$, consider one of the arcs of $\gamma' \cap {\mathcal C}(\eta'_i)$ intersecting $\eta'_i$. This arc can not be contained in ${\mathcal C}(\eta'_i)$ so its length is at least $2d(\eta_i)\geq \eps_0$.

Because any closed curve in the homology class of one of the $\eta'_i$'s has length at least $\eps_0$, we conclude that $\sys(S'_0)\geq \eps_0$.
 \end{proof}
 
By applying once again Theorem 1.1 from  \cite{BPS12}, we can find two homologically independent closed curves $\gamma'$ and $\delta'$ of $S'_0$ with non-zero intersection and whose length product is bounded above by $\left( C_0 \, \log (2g)\right)^2$. Moreover, we can assume that $\gamma'$ and $\delta'$ minimize the length product over pairs of closed simple curves with non-zero intersection number.

\medskip

Suppose first that both $\gamma'$ and $\delta'$ have zero intersection number with any of the $\eta_i'$'s. This implies that $\gamma'$ (respectively $\delta'$) is disjoint from all of the $\eta_i'$'s or coincides with one of the $\eta_i$'s.

To see this suppose that $\gamma' \cap \eta_i\neq \emptyset$ for some $i$ while $\gamma' \neq \eta'_i$. There exist two points $p$ and $q$ of $\gamma' \cap \eta_i$ and a subarc $\zeta$ of $\gamma'$ from $p$ to $q$ such that 
\begin{itemize}[label=$\cdot$,itemsep=1ex]
\item the interior of $\zeta$ does not intersect $\eta'_i$;
\item $\zeta$ starts and ends on the same side of $\eta'_i$.
\end{itemize}
Thus $\gamma'$ decomposes into the sum of two simple loops $\gamma'_1$ and $\gamma'_2$ respectively made of the concatenation of $\zeta$ and of $\gamma' \setminus \zeta$ with the shortest path of $\eta'_i$ from $p$ to $q$. Observe that
$$
\ell(\gamma'_i)< \ell(\gamma')
$$
as $\ell(\eta'_i)/2=\eps_0/2\leq d(\eta_i)$.

Now because $[\gamma']=[\gamma'_1]+[\gamma'_2]$ one of the curves $\gamma'_1$ or $\gamma'_2$ has non-zero intersection number with $\delta'$ and the product of its length with $\ell(\delta')$ is strictly less than the initial length product $\ell(\gamma') \cdot \ell(\delta')$. This is a contradiction and we can thus assume that both $\gamma'$ and $\delta'$ are either disjoint from any of the $\eta_i'$'s or coincide with one of them. 

Because $\gamma'$ and $\delta'$ are either disjoint from any of the $\eta_i'$'s or coincide with one of them, they actually coincide with closed curves in $S_0$, and so we have found a pair of curves with the desired properties. 

\medskip

Suppose now that $\gamma'$ (after possibly commuting $\gamma'$ and $\delta'$) has non-zero intersection number with one of the $\eta'_i$'s. Let $i_0 \in \{1,\ldots,p\}$ be the smallest integer $i$ between $1$ and $p$ and such that $[\gamma'] \cap [\eta'_i] \neq 0$. 

The curve $\gamma'$ lifts to a multiarc of $S_0$ with extremal points belonging to $\cup_{i=1}^p \partial C_{\eps_0}(\eta_i)$. This multiarc can be completed into a closed curve $\gamma$ on $S_0$ by joining two extremal points lying in the boundary of $\partial C_{\eps_0}(\eta_i)$ and corresponding to the same point of $\gamma'$ by an arc crossing the cylinder $C_{\eps_0}(\eta_i)$. The length of each such arcs is bounded from above by 
$$
2 \, \text{arcsinh}\left({1\over\sinh (\ell(\eta_{i_0})/2)}\right).
$$
Because $\gamma'$ is minimizing each arc cutting $\eta'_i$ must leave the cylinder ${\mathcal C}(\eta'_i)$, and the number of such arcs is bounded from above by 
$$
\ell(\gamma')/({2\eps_0}).
$$
Thus 
$$
\ell(\gamma) \leq \ell(\gamma')\left(1+{1\over \eps_0} \cdot \text{arcsinh}\left({1\over\sinh (\ell(\eta_{i_0})/2)}\right)\right).
$$
Then we get
$$
\ell(\gamma) \cdot \ell(\eta_{i_0}) \leq \ell(\gamma')\cdot \ell(\eta_{i_0})+\ell(\gamma')\cdot \ell(\eta_{i_0}) \cdot {1\over \eps_0} \cdot \text{arcsinh}\left({1\over\sinh (\ell(\eta_{i_0})/2)}\right)
$$
and because
$$
\ell \cdot \text{arcsinh}\left({1\over\sinh (\ell/2)}\right)
 \to 0
$$
when $\ell \to 0$, there exists a universal constant $c_o$ such that
$$
{1\over \eps_0} \cdot \ell \cdot \text{arcsinh}\left({1\over\sinh (\ell/2)}\right)
 \leq c_0
$$
for any $\ell \leq \eps_0$.
Thus
$$
\ell(\gamma) \cdot \ell(\eta_{i_0})\leq \ell(\gamma')\cdot \ell(\eta_{i_0})+c_0 \cdot \ell(\gamma')\leq (1+c_0/\eps_0) \cdot \ell(\gamma') \cdot \ell(\delta')
$$
as $\ell(\delta')\geq \sys(S'_0)\geq \eps_0$.
To conclude, remark that $\gamma$ and $\eta_{i_0}$ are two closed curves with non-zero intersection number since $[\gamma]\cap [\eta_{i_0}]=[\gamma']\cap [\eta'_{i_0}]$. Furthermore their length product is bounded from above as follows:
$$
\ell(\gamma) \cdot \ell(\eta_{i_0})\leq (1+c_0/\eps_0) (C_0 \log (2g))^2.
$$
This proves the result.
\end{proof}




\begin{thebibliography}{99}


\bibitem[ABT16]{ABT16}
J.C. \'Alvarez Paiva, F. Balacheff \& K. Tzanev: {\it Isosystolic inequalities for optical hypersurfaces}.
Advances in Mathematics {\bf 301} (2016), 934-972.

\bibitem[Bab92]{Bab92}
I. Babenko: {\it Asymptotic invariants of smooth manifolds}.
Izv. Ross. Akad. Nauk Ser. Mat. {\bf 56} (1992), 707-751; 
translation in
Russian Acad. Sci. Izv. Math. {\bf 41} (1993), 1-38.

\bibitem[BB06]{BB06}
I. Babenko \& F. Balacheff: {\it Sur la forme de la boule unit\'e de la norme stable unidimensionnelle}. 
Manuscripta Math. {\bf 119} (2006), 347-358.
 
 \bibitem[BPS12]{BPS12} 
F. Balacheff, H. Parlier \& S. Sabourau: {\it Short loop decompositions of surfaces and the geometry of Jacobians}. 
Geom. Funct. Anal. {\bf 22} (2012), 37-73.

\bibitem[BK16]{BK16}
F. Balacheff \& S. Karam: {\it Length product of homologically independent loops for tori}.
Journal of Topology and Analysis {\bf 8} (2016), 497-500.

\bibitem[BS02]{BS02}
B. Bollob\'as \& E. Szemer\'edi: {\it Girth of sparse graphs}.
J. Graph Theory {\bf 39} (2002), 194-200. 

\bibitem[BT97]{BT97}
B. Bollob\'as \& A. Thomason: {\it On the girth of Hamiltonian weakly pancyclic graphs}.
J. Graph Theory {\bf 26} (1997), 165-173. 

\bibitem[Bur92]{Bur92}
D. Burago: {\it Periodic metrics.} Adv. Soviet Math. {\bf 9} (1992), 205-210.

\bibitem[Bus92]{Bus92}
P. Buser: {\it Geometry and Spectra of Compact Riemann Surfaces}. 
Reprint of the 1992 edition. Modern Birkhuser Classics. Birkhuser Boston, Inc., Boston, MA, 2010. 

\bibitem[BS94]{BS94}
P. Buser \& P. Sarnak: {\it On the period matrix of a Riemann surface of large genus}. With an appendix by J. H. Conway and N. J. A. Sloane. 
Invent. Math. {\bf 117} (1994), 27-56.
 
\bibitem[Cay45]{Cay45}
A. Cayley: {\it On the theory of linear transformations}.
Cambridge Math. Journal {\bf 4} (1845), 193-209; reprinted in "Collected papers" {\bf 1}, 80-94, Cambridge Univ. Press, London/New York, 1889.

\bibitem[CS16]{CS16}
F. Cerocchi \& A. Sambusetti, {\it Quantitative bounded distance theorem and MargulisÕ Lemma for $Z^n$-actions, with applications to homology}.
Groups Geometry and Dynamics {\bf 10} (2016), 1227-1247.

\bibitem[Cra27]{Cra27}
C.M. Cramlet: {\it The derivation of algebraic invariants by tensor algebra}.
Bull. Amer. Math. Soc. {\bf 34} (1928), 334Ð342. 

\bibitem[CK03]{CK03}
C. Croke \& M. Katz: {\it Universal volume bounds in Riemannian manifolds}.
Chapter in Surveys in Differential Geometry S.-T. Yau ed., Intrernational Press 2003.
 
\bibitem[Fel57]{Feller} 
W. Feller: {\it An introduction to probability theory and its applications}. 
Vol. I, John Wiley and Sons, Inc., New York, 1957, 2nd ed.

\bibitem[GKZ92]{GKZ92}
I. Gelfand, M. Kapranov \& A. Zelevinsky: {\it Hyperdeterminants}
Advances in Mathematics {\bf 96} (1992), 226-263.

\bibitem[Gro96]{Gro96}
M. Gromov: {\em Systoles and intersystolic inequalities}.
Actes de la {T}able {R}onde de {G}\'eom\'etrie {D}iff\'erentielle, Collection SMF {\bf 1} (1996), 291-362.

\bibitem[Gut10]{Gut10}
L. Guth: {\it Systolic inequalities and minimal hypersurfaces}. 
Geom. Funct. Anal. {\bf 19} (2010), 1688Ð1692.

\bibitem[Gut18]{Gut18}
J. Gutt: {\em Private communication} (2018).

\bibitem[Hat02]{Hatcher}
A. Hatcher: {\it Algebraic topology}. 
Cambridge University Press, Cambridge, 2002.

\bibitem[LPS88]{LPS88} 
A. Lubotzky, R. Phillips \& P. Sarnak: {\it Ramanujan Graphs}. 
Combinatorica {\bf 8} (1988), 261Ð277.

\bibitem[Mil70]{Mil70}
J. Milnor: {\it Symmetric inner products in characteristic $2$}
Prospects in Mathematics (Proc. Sympos., Princeton Univ., Princeton, NJ, 1970), 59Ð75.

\bibitem[Wei84]{Wei84}
A. Weiss: {\it Girths of bipartite sextet graphs}. 
Combinatorica {\bf 4} (1984), 241Ð245.

\end{thebibliography}
\end{document}